\newtheorem{theo}{Theorem} 
\newtheorem{lem}[theo]{Lemma} 
\newtheorem{prop}[theo]{Proposition} 
\newtheorem{claim}[theo]{Claim} 
\newtheorem{Def}[theo]{Definition} 
\newtheorem{remark}[theo]{Remark} 
\newcommand{\ite}{\noindent $\bullet$}
\newcommand{\beq}{\vspace{-.1cm}$$} 
\newcommand{\eeq}{\vspace{-.1cm}$$}
\newcommand\ds{\displaystyle} 
\newcommand\al{\alpha} 
\newcommand\be{\beta} 
\newcommand\la{\lambda}
\newcommand{\mO}{\mathcal{O}} 
\newcommand{\wO}{\widetilde{\mO}} 
\newcommand{\mB}{\mathcal{B}} 
\newcommand{\wB}{\widetilde{\mB}} 
\newcommand{\wS}{\widetilde{\mS}}
\newcommand{\mE}{\mathcal{E}} 
\newcommand{\cB}{\mathcal{B}} 
\newcommand{\mC}{\mathcal{C}} 
\newcommand{\mS}{\mathcal{S}} 
\newcommand{\mG}{\mathcal{G}} 
\newcommand{\NN}{\mathbb{N}} 
\newcommand{\wOO}{\wO_0} 
\newcommand{\wSO}{\wS_0} 
\newcommand{\vv}{\textrm{v}} 
\newcommand{\ee}{\textrm{e}} 
\newcommand{\ff}{\textrm{f}} 
\newcommand{\bb}{\textrm{b}} 
\newcommand{\ww}{\textrm{w}} 
\newcommand{\oo}{\textrm{o}}
\newcommand{\bu}{\bullet} 
\newcommand{\ddm}{$d/(d\!-\!2)$} 
\newcommand{\bbm}{$b/(b\!-\!1)$} 
\newcommand{\fig}[3]{\begin{figure}[h]\begin{center}\includegraphics[#1]{#2}\end{center}\caption{#3}\label{fig:#2}\end{figure}} 
\def\hA{\hat{A}} 
\def\hS{\hat{S}} 
\def\cA{\mathcal{A}} 
\def\cM{\mathcal{M}} 
\def\cW{\mathcal{W}} 
\newcommand{\titre}[1]{\noindent \textbf{#1}} 
\author[O. Bernardi and \'E. Fusy]{Olivier Bernardi$^{*}$ \and \'{E}ric Fusy$^{\dagger}$} 
\thanks{$^{*}$Dept. of Mathematics, MIT, 77 Massachusetts Avenue, Cambridge MA 02139, USA, 
bernardi@math.mit.edu. Supported by the French ANR project A3 and the European project ExploreMaps -- ERC StG 208471.\\ 
$^{\dagger}$LIX, \'Ecole Polytechnique, 91128 Palaiseau cedex, France, fusy@lix.polytechnique.fr. 
Supported by the European project 
ExploreMaps -- ERC StG 208471}
\title[A bijection for triangulations, quadrangulations, etc.]{A bijection for triangulations, quadrangulations, pentagulations, etc.}
\begin{document} 
\maketitle 
\begin{abstract} 
A $d$-angulation is a planar map with faces of degree $d$. We present for each integer $d\geq 3$ a bijection between the class of $d$-angulations of girth~$d$ (i.e., with no cycle of length less than $d$) and a class of decorated plane trees. Each of the bijections is obtained by specializing a ``master bijection'' which extends an earlier construction of the first author. 
Our construction unifies known bijections by Fusy, Poulalhon and Schaeffer for triangulations ($d=3$) and by Schaeffer for quadrangulations ($d=4$). For $d\geq 5$, both the bijections and the enumerative results are new. 
 
We also extend our bijections so as to enumerate \emph{$p$-gonal $d$-angulations} ($d$-angulations with a simple boundary of length $p$) of girth $d$. We thereby recover bijectively the results of Brown for simple $p$-gonal triangulations and simple $2p$-gonal quadrangulations and establish new results for $d\geq 5$. 
 
A key ingredient in our proofs is a class of orientations characterizing $d$-angulations of girth $d$. Earlier results by Schnyder and by De Fraysseix and Ossona de Mendez showed that simple triangulations  and simple quadrangulations  are characterized by the existence of orientations having respectively indegree 3 and 2 at each inner vertex. 
We extend this characterization by showing that a $d$-angulation has girth $d$ if and only if the graph obtained by duplicating each edge $d-2$ times admits an orientation having indegree $d$ at each inner vertex. 
\end{abstract}

\section{Introduction} 
The enumeration of planar maps (connected planar graphs embedded in the sphere) has received a lot of 
attention since the seminal work of Tutte in the 60's~\cite{Tu63}. Tutte's method for counting a class of maps consists in translating a recursive description of the class (typically obtained by deleting an edge) into a functional equation 
satisfied by the corresponding generating function. The translation usually requires to introduce a ``catalytic'' variable, and the functional equation is solved using the so-called ``quadratic method''~\cite[Sec.2.9]{GoJa83} 
or its extensions~\cite{BJ06a}. The final result is, for many classes of maps, a strikingly simple counting formula. 
For instance, the number of maps with $n$ edges is $\frac{2\cdot 3^n}{(n+1)(n+2)}{2n\choose n}$. 
Tutte's method has the advantage of being systematic, but is quite technical 
in the way of solving the equations and does not give a combinatorial understanding of the simple-looking enumerative formulas. Other methods for the enumeration of maps were developed later, based either on matrix integrals, representations of the symmetric group, or bijections. 
 
The bijective approach has the advantage of giving deep insights into the combinatorial properties of maps. This approach was used to solve several statistical physics models (Ising, hard particles) on random lattices~\cite{BoSchaeffe,BoDiGu07}, and to investigate the metric properties of random maps~\cite{ChSc04,BDFG:mobiles}. It also has nice algorithmic applications  (random generation and asymptotically optimal encoding in linear time). 
The first bijections appeared in~\cite{CoriVa}, and later in~\cite{Schaeffer:these} where more transparent constructions were given for several classes of maps. 
Typically, bijections are from a class of ``decorated'' plane trees to a class of maps and operate on trees by progressively 
closing facial cycles. However, even if the bijective approach was successfully applied to many classes of maps, it is not, up to now, as systematic as Tutte's recursive method. Indeed, for each class of maps, one has to guess (using counting results) the corresponding family of decorated trees, and then to invent a mapping between trees and maps. 
 
A contribution of this article is to make the bijective approach more systematic by showing that a single ``master bijection'' can be specialized in various ways so as to obtain bijections for several classes of maps. 
The master bijection $\Phi$ presented in Section~\ref{section:mobile} is an extension of a construction by the first author~\cite{OB:boisees} (reformulated 
and extended to higher genus in~\cite{OB:covered-maps}).  To be more exact, the bijection $\Phi$ is between a set $\mO$ of \emph{oriented maps} and a set of mobiles. 
It turns out that for many classes of maps there is a canonical way of orienting the maps in the class $\mC$, so that $\mC$ is identified with a subfamily $\mO_\mC$ of $\mO$ on which our bijection $\Phi$ restricts nicely. 
Typically, the oriented maps in $\mO_\mC$ are characterized by degree constraints which can be traced through our construction and yield a degree characterization of the associated mobiles. Then the family of mobiles can be  specified by a decomposition grammar and is amenable to the Lagrange inversion formula for counting. 
To summarize, the bijective approach presented here is more systematic, since it consists in specializing the master bijection $\Phi$ to several classes of maps. 
The problem of enumerating a class of map $\mC$ then reduces to guessing a family of ``canonical'' orientations (in $\mO$) for $\mC$, instead of guessing a family of decorated trees and a bijection, which is harder.

We apply our bijective strategy to an infinite collection of classes of maps. More precisely, we consider for each integer $d\geq 3$ the class $\mC_d$ of $d$\emph{-angulations} (planar maps with faces of degree $d$) of girth $d$ (i.e., having no cycle of length less than $d$). The families $\mC_3$ and $\mC_4$ correspond respectively 
to simple triangulations and simple quadrangulations. We show in Section~\ref{sec:bij_dang} that each class $\mC_d$ is amenable to our bijective strategy: by specializing the bijection $\Phi$, we obtain a bijection between the class $\mC_d$ and a class of mobiles characterized by certain degree conditions.  Bijections already existed for the class $\mC_3$ of simple triangulations~\cite{Poulalhon:triang-3connexe+boundary,FuPoScL} and the class $\mC_4$ of simple quadrangulations~\cite{Fusy:these,Schaeffer:these}.  Our approach actually coincides (and unifies) the bijections presented respectively in~\cite[Theo.4.10]{FuPoScL} for triangulations and in~\cite[Sec.2.3.3]{Schaeffer:these} for quadrangulations. 
 
In Section~\ref{sec:bij_dang_bond}, we show that our bijective strategy applies more generally to $d$-angulations with a simple boundary. More precisely, for $p\geq d\geq 3$, we consider the class $\mC_{p,d}$ of $p$-gonal $d$-angulations (maps with inner faces of degree $d$ and an outer face which is a simple cycle of degree $p$) of girth $d$. We specialize the master bijection in order to get a bijection for the class $\mC_{p,d}$ (this requires to first mark an inner face and decompose the map into two pieces; moreover one needs to use two versions of the master bijection denoted $\Phi_+$ and $\Phi_-$). Again the associated mobiles are characterized by simple degree conditions.

For $p\geq d \geq 3$, the mobiles associated to the classes $\mC_{p,d}$ (and in particular to the classes $\mC_{d}\equiv \mC_{d,d}$) are characterized by certain degree conditions, hence their generating functions are specified by a system of algebraic equations. 
We thus obtain bijectively an algebraic system characterizing the generating function of the class $\mC_{p,d}$ (these results are new for $d\geq 5$). These algebraic systems follow a simple uniform pattern, and their forms imply that the number of rooted $d$-angulations of girth $d$ of size $n$ is asymptotically equal to $c_d\, n^{-5/2}{\gamma_d}^n$ where $c_d$ and $\gamma_d$ are computable constant (a typical behavior for families of maps).  
In the cases $d\in \{3,4\}$, we recover  bijectively the enumerative results obtained by Brown~\cite{Brown:triang3connexes+boundary,Brown:quadrangulation+boundary}. That is, we show that  the number $t_{p,n}$ of $p$-gonal simple triangulations  with $p+n$ vertices and the number  $q_{p,n}$ of $2p$-gonal simple quadrangulations with $2p+n$ vertices are 
\beq 
t_{p,n}=\!\frac{2(2p-3)!}{(p-1)!(p-3)!} \frac{(4n+2p-5)!}{n!(3n+2p-3)!},\ \ \ \   q_{p,n}=\!\frac{3(3p-2)!}{(p-2)!(2p-1)!}\frac{(3n+3p-4)!}{n!(2n+3p-2)!}. 
\eeq 
As mentioned above, bijective proofs already existed for the case $p=d\in\{3,4\}$ \cite{Poulalhon:triang-3connexe+boundary,FuPoScL,Fusy:these,Schaeffer:these}. Moreover a bijection different from ours  was given for the cases $p\geq d=3$ in~\cite{Poulalhon:triang-3connexe+boundary},  but the surjectivity was proved with the help of Brown's counting formulas. 

We now point out one of the key ingredients used in our bijective strategy. As explained above, for a class $\mC$ of maps, our bijective strategy requires to specify a canonical orientation in order to identify $\mC$ with a subclass $\mO_\mC\subset \mO$ of oriented maps. For $d$-angulations with a simple outer face, a simple calculation based on the Euler relation shows that the numbers $n$ and $m$ of inner vertices and inner edges are related by $m=\tfrac{d}{d-2} n$. This suggests a candidate canonical orientation for a $d$-angulation $G$: just ask for an orientation with indegree $\tfrac{d}{d-2}$ at each inner vertex\ldots or more reasonably an orientation of the graph $(d-2)\cdot G$ (the graph obtained from $G$ replacing each edge by $(d-2)$ parallel edges) with indegree $d$ at each inner vertex. We show that such an orientation (conveniently formulated as a \emph{weighted 
orientation} on $G$) exists for a $d$-angulation if and only if it has girth~$d$, and use these orientations to identify the class $\mC_d$ with a subfamily $\mO_{\mC_d}\subset \mO$. This extends earlier results by Schnyder for simple triangulations~\cite{Schnyder:wood1} and by de Fraysseix and Ossona de Mendez for simple quadrangulations~\cite{Fraysseix:Topological-aspect-orientations}.\\ 
 
This article lays the foundations for some articles to come, where the master bijection strategy will be applied. 
In particular we will show in~\cite{BeFu_Girth} that specializing the master bijection $\Phi$ gives a bijective way of counting maps with control on the face-degrees and on the girth. This extends the known bijections~\cite{BDFG:mobiles,bouttier-2002-645} which give a bijective way of counting maps with control on the face-degrees but no constraint of girth (the bipartite case of the bijection in~\cite{BDFG:mobiles} was already recovered in~\cite{OB:covered-maps} as a specialization of the construction of~\cite{OB:boisees} which is related to $\Phi$).

\medskip

\section{Maps and orientations}\label{section:definitions} 
In this section we gather definitions about maps and orientations.\\ 
 
\titre{Maps.} 
A (planar) \emph{map} is a connected planar graph embedded in the oriented sphere and considered up to continuous deformation. Loops and multiple edges are allowed. 
The \emph{faces} are the connected components of the complementary of the graph. A \emph{plane tree} is a map without cycles (it has a unique face). Cutting an edge~$e$ at its middle point gives two \emph{half-edges}, each incident to an endpoint of $e$ (they are both incident to the same vertex if $e$ is a loop). We shall also consider some maps decorated with dangling half-edges called \emph{buds} (see e.g. Figure~\ref{fig:dual_opening}). A \emph{corner} is the angular section between two consecutive half-edges around a vertex. The degree of a vertex or face is the number of incident corners. A  $d$-\emph{angulation} is a map such that every face has degree~$d$. \emph{Triangulations} and \emph{quadrangulations} correspond to the cases $d=3$ and $d=4$ respectively. The \emph{girth} of a map is the minimal length of its cycles. Obviously a $d$-angulation has girth at most $d$ (except if it is a tree). A map is \emph{loopless} if it has girth at least $2$, and \emph{simple} (no loops nor multiple edges) if it has girth at least 3. 
 
The numbers $\vv$, $\ee$, $\ff$ of vertices, edges and faces of a map are related by the \emph{Euler relation}: $\vv-\ee+\ff=2$. We shall also use the following result. 
\begin{lem} \label{lem:counting} 
If a map with $\vv$ vertices and $\ee$ edges has a face $f_0$ of degree $p$ and the other faces of degree at least~$d$, then 
$$(d-2)(\ee-p) ~\leq~ d (\vv-p)+p-d,$$ 
with equality if and only if the faces other than $f_0$ have degree exactly $d$. 
\end{lem}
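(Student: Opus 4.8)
The plan is to combine the Euler relation with the edge--face incidence count and then simplify. First I would note that the claimed inequality is, after expanding both sides, equivalent to the cleaner form $(d-2)\ee \le d\,\vv - d - p$: indeed the left-hand side equals $(d-2)\ee - dp + 2p$ and the right-hand side equals $d\,\vv - dp + p - d$, and cancelling $-dp$ and the surplus $p$ reduces the statement to $(d-2)\ee \le d\,\vv - d - p$. So it suffices to establish this reduced form, together with the same equality condition.

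To prove it, I would count the corner-incidences of faces. Since each of the $2\ee$ half-edges contributes exactly one corner, and each corner is incident to exactly one face, the sum of the face degrees equals $2\ee$. Isolating $f_0$ and using that every other face has degree at least $d$ gives $2\ee = p + \sum_{f \neq f_0}\deg(f) \ge p + d(\ff - 1)$, where $\ff$ denotes the total number of faces. This inequality is an equality precisely when every face other than $f_0$ has degree exactly $d$, which is exactly the equality condition we want to track.

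Next I would eliminate $\ff$ via the Euler relation $\vv - \ee + \ff = 2$, that is $\ff = \ee - \vv + 2$. Substituting into $2\ee \ge p + d(\ff - 1)$ and rearranging yields $2\ee \ge p + d(\ee - \vv + 1)$, hence $(d-2)\ee \le d\,\vv - d - p$, which is the reduced inequality. Since every step of this passage is a reversible algebraic manipulation, the equality case of the final inequality holds if and only if the face-degree bound is tight, i.e.\ if and only if all faces other than $f_0$ have degree exactly $d$.

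I do not expect a genuine obstacle here: the only point requiring a little care is the justification of the face-degree identity $\sum_f \deg(f) = 2\ee$ in the presence of bridges or loops (a bridge is counted twice by the single face bordering it, and likewise for a loop), but this is automatic from the corner-based definition of face degree adopted earlier. Everything else is routine bookkeeping.
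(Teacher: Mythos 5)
Your proof is correct and follows essentially the same route as the paper's: the face--edge incidence bound $2\ee \ge p + d(\ff-1)$ (with equality exactly when all non-root faces have degree $d$) combined with the Euler relation to eliminate $\ff$. The extra details you supply (the algebraic reduction to $(d-2)\ee \le d\vv - d - p$ and the corner-based justification of $\sum_f \deg(f) = 2\ee$) are just an expanded version of what the paper leaves implicit.
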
 
\begin{proof} 
The incidence relation between faces and edges gives $2\ee\geq p+d(\ff-1)$, with equality if the faces other than $f_0$ have degree exactly $d$. Combining this with the  Euler relation (so as to eliminate $\ff$) gives the claim. 
\end{proof} 
 
A map is said to be \emph{vertex-rooted}, \emph{face-rooted}, or \emph{corner-rooted} respectively if one of its vertices, faces, or corners  is marked\footnote{Corner-rooted map are usually simply called \emph{rooted maps} in the literature. A face-rooted map can be thought as a \emph{plane map} (a connected graph embedded in the plane) by thinking of the root-face as the infinite face.}. The marked vertex, face or corner is called the \emph{root-vertex}, \emph{root-face}, or \emph{root-corner}. For a corner-rooted map, the marked corner is indicated by a dangling half-edge pointing to that corner; see Figure~\ref{fig:close_mobile_ccw}(c). 
A corner-rooted map is said to \emph{induce} the vertex-rooted map (resp. face-rooted map) obtained by marking the vertex incident to the root-corner (resp. the face containing the root-corner), but otherwise forgetting the root-corner. 
Given a face-rooted (or corner-rooted) map, the root-face is classically taken as the outer (infinite) face
in plane embeddings. The degree of the root-face is called the \emph{outer degree}.  
 The faces distinct from the root-face are called \emph{inner faces},
and the vertices and edges
 are said to be \emph{outer} or \emph{inner} whether they are incident to the root-face or not.
Similarly, a half-edge is said to be \emph{inner} (resp. \emph{outer}) if it lies on an inner (resp. outer) edge.\\

\titre{Orientations, biorientations.} 
Let $G$ be a map. An \emph{orientation} of $G$ is a choice of a direction for each edge of $G$. A \emph{biorientation} of $G$ is a choice of a direction for each half-edge of $G$: each half-edge can be either \emph{outgoing} (oriented toward the middle of the edge) or \emph{ingoing} (oriented toward the incident vertex). For $i=0,1,2$ we call $i$\emph{-way} an edge with exactly $i$ ingoing half-edges. Our convention for representing $i$-way edges is shown in Figure~\ref{fig:biorientation}(a). Clearly, orientations can be seen as a special kind of biorientations, in which every edge is 1-way. The \emph{indegree}  (resp. \emph{outdegree}) of a vertex $v$ is the number of ingoing (resp. outgoing) half-edges incident to $v$. The \emph{clockwise-degree} of a face $f$ is the number of outgoing half-edges $h$ incident to $f$ having the face $f$ on their right (i.e., the number of incidences  with 0-way edges, plus the number of 1-way edges having $f$ on their right). A face of clockwise-degree 3 is represented in Figure~\ref{fig:clockwise-minimal}(b). 
A \emph{directed path} in a biorientation is a  path going through some vertices $v_1,\ldots,v_{k+1}$ in such a way that for all $i=1,\ldots,k$ the edge between $v_{i}$ and $v_{i+1}$ is either 2-way or 1-way oriented toward $v_{i+1}$. A \emph{circuit} is a  \emph{directed cycle} (closed directed path) which is \emph{simple} (no edge or vertex is used twice). 
 
\fig{width=\linewidth}{biorientation}{(a) Biorientations: a 0-way edge (left), a 1-way edge (middle) and a 2-way edge (right). (b) Correspondence between $\NN$-biorientations of a map $M$ and ordinary orientations of the associated map $M'$ (obtained by replacing the edge $e$ by a bunch of parallel edges).}

A \emph{planar biorientation} (shortly called a biorientation hereafter) is a planar map  with a biorientation. 
A biorientation is said to be \emph{vertex-rooted}, \emph{face-rooted}, \emph{corner-rooted}  if the underlying map is 
respectively vertex-rooted, face-rooted, corner-rooted. A biorientation is \emph{accessible from a vertex} $v$ if any vertex is reachable from $v$ by a directed path. A  vertex-rooted (or corner-rooted) biorientation is said to be \emph{accessible} if it is accessible from the root-vertex. A circuit of a face-rooted (or corner-rooted) biorientation is \emph{clockwise} if the root-face is on its left and \emph{counterclockwise} otherwise; see Figure~\ref{fig:clockwise-minimal}(a). The biorientation is \emph{minimal} if it has no counterclockwise circuit.\\

\titre{Weighted biorientations.} 
A biorientation is \emph{weighted}  by associating a \emph{weight} $w(h)\in\mathbb{R}$ to each half-edge $h$. 
The \emph{weight} of an edge is the sum of the weights of the two half-edges. 
The \emph{weight} of a vertex~$v$ is the sum of the weights of the incident ingoing half-edges. The \emph{weight} of a face $f$ is the sum of the weights of the outgoing half-edges incident to $f$ and having $f$ on their right.  A face of weight -5 is represented in Figure~\ref{fig:clockwise-minimal}(b).
 
A weighted biorientation is called \emph{consistent} if the weights of ingoing half-edges are positive 
and the weights of outgoing half-edges are nonpositive (in that case the biorientation 
is determined by the weights). An \emph{$\NN$-biorientation} is a consistent 
weighted biorientation with weights in $\NN=\{0,1,2\ldots\}$ (so outgoing half-edges have weight $0$ 
and ingoing half-edges have positive integer weights). Note that ordinary orientations 
identify with $\NN$-biorientations where each edge has weight $1$. 
Let $M=(V,E)$ be a map, let $\alpha$ be a function from the vertex set $V$ to $\NN$ and let $\beta$ be a function from the edge set $E$ to $\NN$. 
 We call $\alpha/\beta$\emph{-orientation} an $\NN$-biorientation such that every vertex $v$ has weight $\alpha(v)$ and every edge $e$ has weight $\beta(e)$.  We now give a criterion 
 for the existence (and uniqueness) of a minimal $\alpha/\beta$-orientation. 
\begin{lem}\label{lem:unique_minimal} 
Let $M$ be a map with vertex set $V$ and edge set $E$, let $\alpha$ be a function from  $V$ to $\NN$, and let $\beta$ be a function from $E$ to $\NN$. 
If there exists an $\alpha/\beta$-orientation for $M$, then there is a unique minimal one. 
\end{lem}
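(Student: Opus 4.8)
The plan is to realize the set of $\alpha/\beta$-orientations of $M$ (with the root-face taken as the outer face, so that ``counterclockwise'' is defined) as a finite poset on which reversing a counterclockwise circuit is a strictly decreasing move, whose minimal elements are exactly the orientations with no counterclockwise circuit, and then to show that only one such element exists. First I would make precise the basic operation: given a circuit $C$, \emph{reversing} $C$ means pushing one unit of weight backward along it, i.e.\ for each edge traversed forward from $v_i$ to $v_{i+1}$ I decrease by $1$ the weight of the half-edge at $v_{i+1}$ (which is ingoing, hence positive, along the circuit) and increase by $1$ the weight of the half-edge at $v_i$. A direct check shows that every vertex-weight and every edge-weight is preserved and that consistency is maintained, so reversing a circuit sends an $\alpha/\beta$-orientation to another one.

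Next I would introduce a face-potential. Fixing the given $\alpha/\beta$-orientation $O_{\mathrm{ref}}$ as a reference, the edgewise difference of half-edge weights between any $\alpha/\beta$-orientation $O$ and $O_{\mathrm{ref}}$ is a \emph{circulation}: the ingoing weights at each vertex sum to $\alpha(v)$ in both orientations, so the net flow at each vertex vanishes. Since $M$ is planar, this circulation is the coboundary of an integer potential $g_O$ on the faces, normalized by $g_O(f_0)=0$ at the root-face $f_0$. Reversing a counterclockwise circuit $C$ decreases $g_O$ by $1$ on exactly the faces enclosed by $C$ (those on the non-root side, a nonempty set) and leaves the others unchanged, hence strictly decreases the integer $\sum_f g_O(f)$. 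As there are only finitely many $\alpha/\beta$-orientations, this process must terminate, and an orientation from which no counterclockwise circuit can be reversed is precisely a minimal one; this proves existence.

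For uniqueness, suppose $O_1$ and $O_2$ are both minimal and set $g=g_{O_1}-g_{O_2}$, the potential of the difference circulation between $O_1$ and $O_2$, with $g(f_0)=0$. If $O_1\neq O_2$ then $g$ is non-constant. Consider a region on which $g$ attains its maximum (or, symmetrically, its minimum): its boundary edges all have the extremal region on the same side, so the circulation is consistently directed along the boundary and forms a circuit $\Gamma$, and the root-face lies outside this region. The sign of the circulation along $\Gamma$ forces each edge of $\Gamma$ to be $2$-way or $1$-way forward in $O_1$ (resp.\ in $O_2$, for the minimum), so $\Gamma$ is a directed circuit of that orientation; and since the root-face is outside the enclosed region, $\Gamma$ is \emph{counterclockwise} in it. This contradicts the minimality of $O_1$ (resp.\ $O_2$), whence $O_1=O_2$.

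The step I expect to be the main obstacle is this last one: converting ``the difference circulation is nonzero'' into ``one of the two orientations has a counterclockwise circuit,'' with the bookkeeping of $2$-way edges and of the clockwise/counterclockwise convention carried out correctly. A clean alternative that sidesteps the direct argument is to reduce to the classical case of ordinary orientations with prescribed indegrees: via the correspondence of Figure~\ref{fig:biorientation}, replacing each edge $e$ by $\beta(e)$ parallel edges turns $\alpha/\beta$-orientations of $M$ into orientations of the blown-up map $M'$ with prescribed indegree $\alpha(v)$ at each vertex, for which existence and uniqueness of a minimal element is known. The price of this reduction is exactly to show that minimality is transported by the correspondence, which amounts to checking that the minimal orientation of $M'$ creates no counterclockwise bigon inside any bundle of parallel edges (so that it genuinely comes from a weighted biorientation of $M$), and that a counterclockwise circuit of $M$ lifts to one of $M'$ and conversely.
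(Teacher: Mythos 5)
Your proposal is sound, and your primary argument takes a genuinely different route from the paper, whose actual proof is precisely your ``clean alternative'': the paper replaces each edge $e$ by $\beta(e)$ parallel edges, notes that $\alpha/\beta$-orientations of $M$ correspond bijectively to \emph{locally minimal} orientations of the blown-up map $M'$ (no counterclockwise circuit inside a bundle of parallel edges), transports minimality through this correspondence, and invokes the known existence and uniqueness of a minimal orientation with prescribed indegrees \cite{Felsner:lattice}; the two verifications the paper glosses over as ``clearly'' are exactly the ones you list at the end (no counterclockwise bigon in a bundle, and lifting/projecting counterclockwise circuits between $M$ and $M'$). Your direct route instead re-proves that classical result in the bioriented setting. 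Its existence half (circuit reversal preserves $\alpha$ and $\beta$, strictly moves the face potential, and finiteness forces termination at an orientation with no counterclockwise circuit) is complete and correct. For uniqueness, your max-region argument is the standard one, but, as you anticipate, it needs care: the set of faces where $g$ is extremal can be disconnected, the boundary of a component can have pinch points (so ``forms a circuit'' requires extracting one from a closed boundary walk), and one must check the extracted circuit keeps enclosed faces on the correct side to be counterclockwise. A cleaner finish that bypasses all of this: by flow conservation, the support of the nonzero difference circulation contains a directed circuit $\Gamma$ (follow positive-flow edges until a vertex repeats); positive flow on an edge forces $w_{O_1}>w_{O_2}\geq 0$ at its head and $w_{O_2}>w_{O_1}\geq 0$ at its tail, so the edge is traversable forward in $O_1$ and backward in $O_2$; hence $\Gamma$ is a circuit of $O_1$ and its reverse is a circuit of $O_2$, and whichever rotational sense $\Gamma$ has, one of the two minimal orientations acquires a counterclockwise circuit, a contradiction. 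In terms of trade-offs, your direct argument is self-contained and exposes the structure underlying minimality (it is essentially how the $\beta\equiv 1$ case is proved), while the paper's reduction is only a few lines long but outsources all the substance to the quoted result.
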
 
 
\begin{lem}\label{lem:exists-alpha} 
Let $M$ be a map with vertex set $V$ and edge set $E$, let $\alpha$ be a function from  $V$ to $\NN$, and let $\beta$ be a function from $E$ to $\NN$. The map $M$ admits an $\alpha/\beta$-orientation if and only if 
\begin{enumerate} 
\item[(i)] $\sum_{v\in V}\alpha(v)=\sum_{e\in E}\beta(e)$, 
\item[(ii)] for every subset $S$ of vertices,  $\sum_{v\in S}\alpha(v)\geq \sum_{e\in E_S}\beta(e)$   where $E_S$ is the set of edges with both ends in $S$. 
\end{enumerate} 
Moreover, $\alpha$-orientations are accessible from a vertex $u$ if and only if 
\begin{enumerate} 
\item[(iii)] for every nonempty subset $S$ 
of vertices not containing $u$, $\sum_{v\in S}\alpha(v)> \sum_{e\in E_S}\be(e)$. 
\end{enumerate} 
\end{lem}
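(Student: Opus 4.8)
The plan is to recognize an $\al/\be$-orientation as an integral flow in an auxiliary network and to read conditions (i)--(iii) off the max-flow/min-cut theorem. By definition of an $\NN$-biorientation, prescribing the weights amounts to assigning to each half-edge $h$ a weight $w(h)\in\NN$, subject to: the two half-edges of an edge $e$ have weights summing to $\be(e)$, and the half-edges incident to a vertex $v$ have weights summing to $\al(v)$ (the outgoing half-edges being exactly those of weight $0$). I would build the network $N$ with a source $s$, a sink $t$, one node per vertex and one node per edge of $M$, an arc $s\to v$ of capacity $\al(v)$ for each $v\in V$, an arc $e\to t$ of capacity $\be(e)$ for each $e\in E$, and an arc $v\to e$ of capacity $+\infty$ for each incidence of a vertex $v$ with an edge $e$ (a loop contributing a single such arc). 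An $\al/\be$-orientation is then exactly an integral flow saturating every arc at $s$ and every arc at $t$, the flow on $v\to e$ recording the weight of the half-edge of $e$ at $v$.

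For necessity, I would use a single double count: for every $S\subseteq V$, the quantity $\sum_{v\in S}\al(v)$ equals $\sum_{e\in E_S}\be(e)$ plus the total weight of the ingoing half-edges lying on edges with exactly one end in $S$. Since this last contribution is nonnegative, (ii) follows at once, and the case $S=V$ gives (i). So both (i) and (ii) are necessary regardless of the flow formulation.

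For sufficiency I would invoke max-flow/min-cut. Because the arcs $v\to e$ are infinite, any finite cut keeps on the source side a set $S\subseteq V$ together with all edges incident to $S$, and has capacity $\sum_{v\notin S}\al(v)+\sum_{e:\,e\cap S\neq\emptyset}\be(e)$. Using (i), this is at least $\sum_{v\in V}\al(v)$ if and only if $\sum_{e:\,e\cap S\neq\emptyset}\be(e)\geq\sum_{v\in S}\al(v)$, and a complementation (again powered by the equality (i)) identifies this with condition (ii) applied to $V\setminus S$. Hence (i) and (ii) force every cut to have capacity at least $\sum_v\al(v)$, so the maximum flow equals $\sum_v\al(v)$; integrality of the maximum flow (all finite capacities being integers) yields the required $\NN$-weights. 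The only genuine bookkeeping obstacle is exactly this mismatch between the min-cut sets, which involve the edges \emph{incident to} $S$, and the within-$S$ sets $E_S$ of condition (ii); the complementation trick is what reconciles them.

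Finally, for accessibility, given an $\al/\be$-orientation let $S$ be the set of vertices not reachable from $u$ by a directed path. If $S\neq\emptyset$ then $u\notin S$, and no edge with one end in $S$ and the other (reachable) end outside can carry an ingoing half-edge on its $S$-side, since otherwise that $S$-vertex would be reachable; the double count above then collapses to $\sum_{v\in S}\al(v)=\sum_{e\in E_S}\be(e)$, contradicting (iii). This gives (iii) $\Rightarrow$ accessibility. Conversely, if (iii) fails for some nonempty $S\not\ni u$, then the inequality $\sum_{v\in S}\al(v)\leq\sum_{e\in E_S}\be(e)$ combined with the always-valid reverse inequality (ii) forces equality, so by the double count no half-edge enters $S$ and $S$ is unreachable from $u$; thus \emph{no} $\al/\be$-orientation is accessible. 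Together these show that the $\al/\be$-orientations are accessible from $u$ precisely when (iii) holds (indeed in an all-or-nothing fashion), completing the proof.
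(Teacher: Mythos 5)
Your proof is correct, but it takes a genuinely different route from the paper's. The paper proves this lemma by reduction to the known case $\beta\equiv 1$ (Felsner's criterion for $\alpha$-orientations): it replaces each edge $e$ of $M$ by $\beta(e)$ parallel edges, observes that $\alpha/\beta$-orientations of $M$ correspond to ordinary orientations of the blown-up map $M'$ with indegree $\alpha(v)$ at every vertex, and notes that conditions (i)--(iii) for $(M,\alpha,\beta)$ translate verbatim into the same conditions for $(M',\alpha,1)$ --- since $|E'_S|=\sum_{e\in E_S}\beta(e)$ --- while accessibility is preserved by the correspondence. You instead prove the criterion from scratch: you encode $\alpha/\beta$-orientations as integral flows in the source--vertex--edge--sink network with capacities $\alpha(v)$, $+\infty$, $\beta(e)$, derive necessity of (i)--(ii) from the double count $\sum_{v\in S}\alpha(v)=\sum_{e\in E_S}\beta(e)+(\text{weight entering }S)$, and obtain sufficiency from max-flow/min-cut plus integrality; your complementation step (which is where (i) genuinely enters) correctly reconciles the min-cut sets (edges meeting $S$) with the sets $E_S$ appearing in condition (ii), and saturation of both the source and sink arcs follows from (i) once the max flow has value $\sum_v\alpha(v)$. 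Your accessibility argument, via the set of vertices unreachable from $u$ and the same double count, is also sound, and it establishes the all-or-nothing form of the statement that the paper actually uses (either every $\alpha/\beta$-orientation is accessible from $u$, or none is). As for what each approach buys: the paper's reduction is shorter, avoids flow machinery, and handles Lemma~\ref{lem:unique_minimal} (existence and uniqueness of the minimal orientation) by the very same device, which your network argument does not address --- though that is outside the statement you were asked to prove; your argument is self-contained in that it essentially re-proves Felsner's criterion in this weighted generality rather than citing it, and it treats the degenerate case $\beta(e)=0$ seamlessly, where edge-duplication would silently delete edges of $M'$.
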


Lemmas \ref{lem:unique_minimal} and \ref{lem:exists-alpha} are both known for the special case where the function $\beta$ takes value 1 on each edge of $G$ (this case corresponds to ordinary orientations and the weight of a vertex is equal to its indegree); see \cite{Felsner:lattice}. The proofs below are simple reductions to the case $\beta=1$.

\begin{proof}[Proof of Lemmas \ref{lem:unique_minimal} and  \ref{lem:exists-alpha}] 
We call $\beta$\emph{-weighted orientations} the $\NN$-biorientations of $M$ such that every edge $e$ 
has weight $\beta(e)$. 
Let $M'$ be the map obtained by replacing each edge $e$ of $M$ by $\be(e)$ edges in parallel. We call \emph{locally minimal} the (ordinary) orientations of $M'$ such that for any edge $e$ of $M$, the corresponding $\beta(e)$ parallel edges of $M'$ do not contain any counterclockwise circuit.  Clearly, the  $\beta$-weighted orientations of $M$ are in bijection with the \emph{locally minimal orientations} of $M'$, see Figure \ref{fig:biorientation}(b).  Thus, there exists an $\alpha/\beta$-orientation of $M$ if and only if there exists an ordinary orientation of $M'$ with indegree $\alpha(v)$ at each vertex. Therefore, Conditions (i),(ii) of Lemma \ref{lem:exists-alpha} for $M$ immediately follow from the same conditions for the particular case $\beta=1$ applied to the map $M'$ (the case $\beta=1$ is treated in \cite{Felsner:lattice}). Moreover an $\alpha/\beta$-orientation of $M$ is accessible from a vertex $u$ if and only if the corresponding orientation of $M'$ is accessible from $u$. Hence Condition (iii) also follows from the particular case $\beta=1$ applied to the map $M'$. Lastly, a $\beta$-weighted orientation of $M$ is minimal if and only if the corresponding orientation of $M'$ is minimal. Hence Lemma \ref{lem:exists-alpha} immediately follows from the  existence and uniqueness of a minimal orientation in the particular case $\beta=1$ applied to the map $M'$. 
\end{proof}

We now define several important classes of biorientations. A face-rooted biorientation is called \emph{clockwise-minimal} if it is minimal and each outer edge is either $2$-way or is $1$-way with a non-root face 
on its right (note that the contour of the root-face is not necessarily simple and can even contain isthmuses); see Figure \ref{fig:clockwise-minimal}(c). A clockwise-minimal biorientation is called \emph{accessible}  if it is accessible from one of the outer vertices (in this case, it is in fact accessible from any outer vertex because one can walk in clockwise order around the root-face). 
A biorientation (weighted or not) is called 
\emph{admissible} if it satisfies the following three conditions: 
(i)~the root-face contour is a simple cycle, (ii)~each outer vertex has indegree $1$, (iii)~each outer  edge is $1$-way, and the weights (if the biorientation is weighted) are $0$ and $1$ on respectively the outgoing 
and the ingoing half-edge; see Figure \ref{fig:clockwise-minimal}(d).
Note that, for such a biorientation the root-face is a circuit, and each inner half-edge incident to an outer vertex is outgoing. 
 
\begin{remark}\label{rk:minimal_Norientation}
For an $\NN$-biorientation, Conditions~(ii) and (iii) are equivalent to ``each outer vertex and each outer edge has weight $1$''.
\end{remark} 
 
 
\begin{Def}\label{def:mBwB} 
We denote by $\mO$ the set of clockwise-minimal accessible (ordinary) orientations, and by $\wO$ 
the subset of orientations in $\mO$ that are admissible. 
 
Similarly we denote by $\mB$ the set of clockwise-minimal accessible weighted biorientations, and by $\wB$ 
the subset of weighted biorientations in $\mB$ that are admissible. 
\end{Def}

 
 
\fig{width=0.8\linewidth}{clockwise-minimal}{(a) A counterclockwise circuit (here the root-face is the infinite face). (b) An inner face with clockwise degree 3 and weight $w=-3-1-1=-5$. (c) Situation around the root-face for a weighted biorientation in $\mB$. (d) Situation around the root-face for a weighted biorientation in $\wB$.}

\bigskip 
\section{Master bijections between oriented maps and mobiles}\label{section:mobile} 
In this section we describe the two ``master bijections'' $\Phi_-$, $\Phi_+$. These bijections will be specialized in Sections~\ref{sec:bij_dang} and~\ref{sec:bij_dang_bond} in order to obtain bijections for  $d$-angulations of girth $d$ with and without boundary.\\ 
 
\subsection{Master bijections $\Phi_+$,  $\Phi_-$ for ordinary orientations}~\\ 
A \emph{properly bicolored} plane tree is an unrooted plane tree with vertices colored black and white, with every edge joining a black and a white vertex. A \emph{properly bicolored mobile} is a properly bicolored plane tree where black vertices can be incident to some dangling half-edges called \emph{buds}. Buds are represented
by outgoing arrows in the figures. An example is shown in Figure~\ref{fig:dual_opening} (right). The \emph{excess} of a properly bicolored  mobile is the number of edges minus the number of buds. 
 
We now define the mappings $\Phi_\pm$ using a local operation performed around each edge. 
\begin{Def}\label{def:one-way-operation} 
Let $e$ be an edge of a planar biorientation, made of half-edges $h$ and $h'$ incident to vertices $v$ and $v'$ respectively. Let $c$ and $c'$ be the corners preceding $h$ and $h'$ in clockwise order around $v$ and $v'$ respectively, and let $f$ and $f'$ be the faces containing these corners. 
Let $b_f$, $b_f'$ be vertices placed inside the faces $f$ and $f'$ (with $b_f=b_{f'}$ if $f=f'$). If $e$ is a 1-way edge with $h$ being the ingoing half-edge  (as in Figure~\ref{fig:one-way-operation}), then the  \emph{local transformation} of $e$ consists in creating an edge joining the black vertex $b_f$ to the vertex $v$ in the corner $c$,  gluing a bud to $b_{f'}$ in the direction of $c'$, and deleting the edge $e$. 
\end{Def} 
The local transformation of a 1-way edge is illustrated in Figure~\ref{fig:one-way-operation} (ignore the weights $w,w'$ for the time being). 
 
\fig{width=9cm}{one-way-operation}{The local transformation of a 1-way edge.}

\begin{Def}\label{def:master-bijections} 
Let $O$ be a face-rooted orientation in $\mO$ with root-face $f_0$. We view the vertices of $O$ as 
\emph{white} and place a \emph{black} vertex $b_f$ in each face $f$ of $O$. 
\begin{itemize} 
\item The embedded graph $\Phi_+(O)$ is obtained by performing the local transformation of each edge of $O$, and then deleting the black vertex $b_{f_0}$ and all the incident buds (the vertex $b_{f_0}$ is incident to no edge). 
\item If $O$ is in $\wO$, the embedded graph $\Phi_-(O)$ with black and white vertices is obtained by first returning all the edges of the outer face (which is a clockwise circuit), then performing the local transformation of each edge, and lastly deleting the black vertex $b_{f_0}$, the white outer vertices of $O$, and the edges between them (no other edge or bud is incident to these vertices). 
\end{itemize} 
\end{Def} 
 
\begin{figure} 
\begin{center} 
\includegraphics[width=\linewidth]{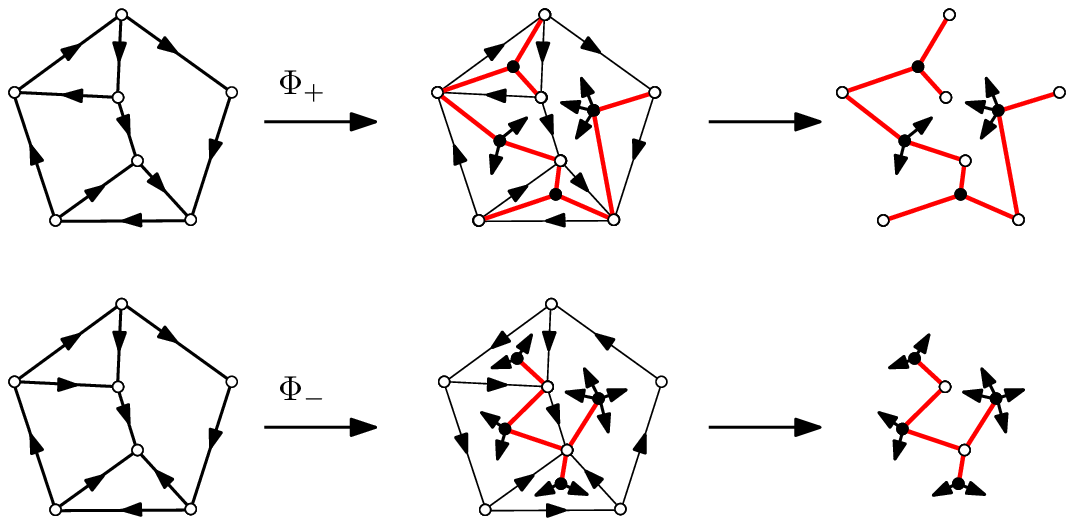} 
\end{center} 
\caption{Top: Mapping $\Phi_+$ applied to an orientation in $\mO$. Bottom: Mapping $\Phi_-$  applied to an orientation in  $\wO$. The vertex $b_{f_0}$ and the incident buds or edges are not represented.} 
\label{fig:dual_opening} 
\end{figure}

The mappings $\Phi_+$, $\Phi_-$ are illustrated in Figure~\ref{fig:dual_opening}. The following theorem is partly based on the results in~\cite{OB:boisees} and its proof is delayed to Section~\ref{sec:fromOBtoPhi}.

\begin{theo}\label{thm:master-bijections} 
Recall Definition~\ref{def:mBwB} for the sets $\mO$ and $\wO$ of orientations. 
\begin{itemize} 
\item 
 The mapping $\Phi_+$ is a bijection between $\mO$ and properly bicolored mobiles of positive excess; the outer degree is mapped to the excess of the mobile. 
\item 
The mapping $\Phi_-$ is a bijection between $\wO$ and properly bicolored mobiles of negative excess; the outer degree is mapped to minus the excess of the mobile. 
\end{itemize} 
\end{theo}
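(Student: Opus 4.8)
The plan is to treat $\Phi_+$ in detail and then obtain $\Phi_-$ by a reduction to it. Throughout, write $\vv,\ee,\ff$ for the numbers of vertices, edges and faces of $O\in\mO$, so that the Euler relation reads $\vv-\ee+\ff=2$. First I would check that $\Phi_+(O)$ is a properly bicolored mobile. Since $O$ is an ordinary orientation every edge is $1$-way, so the local transformation of Definition~\ref{def:one-way-operation} applies to each edge and produces exactly one tree-edge (always joining a black vertex $b_f$ to a white vertex $v$) together with one bud (always glued to a black vertex); hence the embedded graph of Figure~\ref{fig:dual_opening} is bicolored with every edge black--white and every bud on a black vertex. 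It remains to prove it is a tree. I would first invoke the clockwise-minimality condition --- each outer edge is $1$-way with a non-root face on its right --- to justify the parenthetical claim of Definition~\ref{def:master-bijections} that $b_{f_0}$ receives no tree-edge; consequently all $\ee$ created edges survive the deletion of $b_{f_0}$, and the graph has $\ee=\vv+\ff-2$ edges on $\vv+\ff-1$ vertices, i.e. exactly (number of vertices)$-1$. It thus suffices to prove connectedness, and this is where accessibility enters: since $O$ is accessible from its outer vertices, a directed-path argument shows that all white vertices together with the black face-vertices threaded along these paths form a single connected piece, so connectedness plus the edge count forces the tree property. For the excess I would note that the transformation creates $\ee$ edges and $\ee$ buds, so the final excess equals the number of buds carried by $b_{f_0}$, i.e. the number of $1$-way edges whose bud lands inside $f_0$; a local inspection around the root-face (again using that outer edges are $1$-way with $f_0$ on their left) identifies this number with the outer degree, which is in particular positive.

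The heart of the argument is bijectivity, which I would establish by exhibiting an inverse ``closure'' map $\Psi_+$. Given a properly bicolored mobile of positive excess, place it in the plane and perform a clockwise contour tour, matching each bud with a suitable white corner following it in clockwise order and recreating a $1$-way edge, in the spirit of Schaeffer's closure. I would prove that positivity of the excess guarantees that every bud is matched while exactly (excess) corners remain unmatched, and that these unmatched corners bound a single region which becomes the root-face $f_0$ (with $b_{f_0}$ reinserted in it). The two remaining tasks are: (a) the resulting orientation is clockwise-minimal and accessible, hence lies in $\mO$; and (b) $\Phi_+\circ\Psi_+=\mathrm{id}$ and $\Psi_+\circ\Phi_+=\mathrm{id}$, checked by verifying that the opening of Definition~\ref{def:master-bijections} undoes the matching edge by edge. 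These facts are exactly what underlies the construction of~\cite{OB:boisees}, from which the bijectivity of $\Phi_+$ onto mobiles of positive excess follows, with outer degree mapped to excess.

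For $\Phi_-$ on $O\in\wO$ I would exploit admissibility: the root-face contour is a simple cycle of length equal to the outer degree $p$, each outer vertex has indegree $1$, and each outer edge is $1$-way. After reversing the outer (clockwise) circuit and applying the same local transformation, the prescribed deletion of $b_{f_0}$, of the $p$ white outer vertices and of the $p$ tree-edges joining them is clean, since those vertices carry no buds and no further incident edges. A direct count then gives excess $=-p$, and the tree and bijection statements follow from the $\Phi_+$ analysis applied after this boundary modification, the inverse closure first reconstructing the reversed boundary cycle before absorbing the remaining buds.

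I expect the main obstacle to be task (a) above: proving that the closure $\Psi_+$ always lands in $\mO$ (clockwise-minimal \emph{and} accessible) and, on the opening side, that the $\vv+\ff-1$ vertices genuinely assemble into one connected acyclic component rather than several pieces or a graph carrying a cycle. Controlling the planar bud-to-corner matching \emph{globally}, rather than merely edge by edge, is the delicate point, and it is precisely here that the minimality and accessibility hypotheses packaged into the definition of $\mO$ (and the admissibility refinement defining $\wO$) are needed; reducing these global statements to the corresponding facts established in~\cite{OB:boisees} is the cleanest route.
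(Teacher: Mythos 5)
There is a genuine gap in your forward direction. You prove the tree property of $\Phi_+(O)$ by combining the edge count (which, as you correctly argue, follows from the condition that every outer edge is $1$-way with a non-root face on its right) with connectedness, and you claim connectedness follows from accessibility alone. It does not: minimality (absence of counterclockwise circuits) is essential here, and your argument never uses it. Concretely, take the outer face to be a triangle $v_1v_2v_3$ oriented as a clockwise circuit, an inner triangle $u_1u_2u_3$ oriented as a \emph{counterclockwise} circuit, and spokes $v_i\to u_i$. This orientation is accessible from every outer vertex and satisfies the outer-edge condition, so the edge count is right; yet the black vertex $b_I$ of the inner triangular face receives only buds and no tree-edge (every edge incident to that face has it on its left), so $b_I$ is an isolated vertex of $\Phi_+(O)$, and the remaining vertices carry a cycle ($u_1,u_2,u_3$ and the three quadrangular faces form a $6$-cycle). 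Thus accessibility plus the edge count cannot force the tree property; a directed path in $O$ simply does not transport connectivity to the black vertices of counterclockwise faces. Any correct proof must make minimality do real work in the forward direction, which is exactly what the paper's proof arranges.

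The second weakness is that your appeal to~\cite{OB:boisees} is not the routine step you suggest. The result available there (Theorem~\ref{thm:base-case} in the paper) concerns \emph{corner-rooted maximal accessible} orientations and mobiles of excess exactly $1$, whereas you need statements about face-rooted orientations in $\mO$ (minimal and accessible) and mobiles of arbitrary positive or negative excess. The paper bridges this distance with three ingredients that are absent from your proposal: (i) \emph{duality}, which exchanges minimality with accessibility and turns $\mO_d$, $\wO_d$ into the vertex-rooted classes $\mS_d$, $\wS_d$ to which the cited theorem can be applied; (ii) a surgery reducing excess $d$ to excess $1$ (attach a new black vertex with $d$ buds to an exposed white corner of the partial closure), together with care about vertex-rooting versus corner-rooting; and (iii) for the negative case, the bijection $\theta$ replacing the $d$ unmatched buds by edges to new white leaves, which reduces $\Psi_-$ to $\Psi_+$ via the observation that the closure lies in $\wS_d$ precisely when all exposed white corners are at leaves. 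Your direct closure (matching buds to white corners) is the dual of the paper's construction and could in principle be made to work, but as written the reduction ``to the corresponding facts established in~\cite{OB:boisees}'' is precisely the content that needs proving, not a step one can defer.
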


\subsection{Master bijections $\Phi_+$, $\Phi_-$ for  weighted biorientations} 
In this subsection we extend the bijections $\Phi_\pm$ to weighted biorientations. 
 
A \emph{mobile} is a plane tree with vertices colored either black or white (the coloring is not necessarily proper), and where black vertices can be incident to some dangling half-edges called \emph{buds}  (buds
are represented by outgoing arrows in the figures). The \emph{excess} of a  mobile is the number of black-white edges, plus twice the number of white-white edges,  minus the number of buds. A  mobile is \emph{weighted} if a weight in $\mathbb{R}$ is associated to each non-bud half-edge. The \emph{indegree} of a vertex $v$ (black or white) is the number of incident non-bud half-edges, and its \emph{weight} is the sum of weights of the incident non-bud half-edges. The \emph{outdegree} of a black-vertex $b$ is the number of incident buds.


We now define the mappings $\Phi_{\pm}$ for weighted biorientations using local operations performed around each edge. We consider an edge $e$ of a weighted biorientation and adopt the notations $h$, $h'$, $v$, $v'$, $c$, $c'$, $f$, $f'$, $b_f$, $b_{f'}$ of Definition~\ref{def:one-way-operation}. We also denote by $w$ and $w'$ respectively the weights of the half-edges $h$ and $h'$. 
Definition~\ref{def:one-way-operation} (\emph{local transformation} of a 1-way edge) is supplemented with weights attributed to the two halves of the edge created between $v$ and $b_f$:  the half-edge incident to $v$ receives weight $w$ and the half-edge incident to $b_f$ receives weight $w'$. 
If the edge $e$ is 0-way, the \emph{local transformation} of $e$ consists in creating an edge between $b_f$ and $b_{f'}$ with weight $w'$ for the half-edge incident to $b_f$ and weight $w$ for the half-edge incident to $b_{f'}$, and then deleting $e$; see Figure \ref{fig:i-way-operation}(a). If the edge $e$ is 2-way, the \emph{local transformation} of $e$ consists in creating buds incident to $b_f$ and $b_{f'}$ respectively in the direction of the corners $c$ and $c'$, and leaving intact the weighted edge $e$; see Figure \ref{fig:i-way-operation}(b). 
 
\fig{width=\linewidth}{i-way-operation}{Local transformation of a 0-way edge (left) and of a 2-way edge (right).}

With these definitions of local transformations for 0-way and 2-way edges, Definition \ref{def:master-bijections} of the mappings $\Phi_-$ and $\Phi_+$ is directly extended   to the case of weighted biorientations in $\mB$ and $\wB$ respectively. The mapping $\Phi_+$ is illustrated in Figure~\ref{fig:bijection_biorientation}. 
 
\fig{width=12cm}{bijection_biorientation}{Top: the bijection $\Phi_+$ applied to a biorientation in $\mB$ (weights are omitted). Bottom: how to reduce the case of biorientations to the case of ordinary orientations.}



\begin{theo}\label{thm:master-bijections-biorientation} 
Recall Definition~\ref{def:mBwB} for the sets $\mB$ and $\wB$ of weighted biorientations. 
\begin{itemize} 
\item 
 The mapping $\Phi_+$ is a bijection between $\mB$ and weighted mobiles of positive excess; the outer degree is mapped to the excess of the mobile. 
\item 
The mapping $\Phi_-$ is a bijection between $\wB$ and weighted mobiles of negative excess; the outer degree is mapped to minus the excess of the mobile. 
\end{itemize} 
\end{theo}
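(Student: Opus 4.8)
The plan is to deduce Theorem~\ref{thm:master-bijections-biorientation} from the already-established orientation case, Theorem~\ref{thm:master-bijections}, by a two-stage reduction: first strip off the weights, then replace the $0$-way and $2$-way edges by small gadgets of $1$-way edges so as to land inside the sets $\mO$ and $\wO$. As in the ordinary case it suffices to treat $\Phi_+$ in detail, since $\Phi_-$ on $\wB$ is obtained from $\Phi_+$ by the very same outer-circuit reversal used in Definition~\ref{def:master-bijections}, and the two statements are interchanged by that reversal.

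\emph{Weights are inert.} The local transformations of Definition~\ref{def:one-way-operation} (and its extensions to $0$-way and $2$-way edges) move the two half-edge weights $w,w'$ to prescribed half-edges of the produced mobile edge, or leave them on the kept edge in the $2$-way case. Each rule is manifestly invertible as a map on decorated half-edges, so a weighted biorientation in $\mB$ and its image carry the same weight data transported along a fixed combinatorial correspondence. Hence it is enough to prove the statement for the underlying \emph{unweighted} biorientations versus unweighted mobiles (where the excess counts black--white edges plus twice the white--white edges minus the buds); the weights are then restored canonically on both sides.

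\emph{Gadget reduction to ordinary orientations.} Given an unweighted biorientation $M\in\mB$, I build an ordinary orientation $\widehat M$ of an auxiliary map $\widehat G$ by leaving $1$-way edges untouched, replacing each $2$-way edge $e=(v,v')$ by a \emph{clockwise digon} (two parallel $1$-way edges bounding a new inner face $g$, oriented so the length-$2$ circuit has the root-face on its left), and replacing each $0$-way edge by the dual construction, which inserts a new degree-$2$ (white) vertex. All edges of $\widehat G$ are then $1$-way. The claims to verify are: (a) $M$ is clockwise-minimal and accessible iff $\widehat M\in\mO$, and $M$ is moreover admissible iff $\widehat M\in\wO$; (b) the diagram commutes, i.e.\ $\Phi_+(M)$ is recovered from $\Phi_+(\widehat M)$ by contracting each new degree-$2$ black vertex $b_g$ into the kept white--white edge and each new degree-$2$ white vertex into a black--black edge. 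Applying Theorem~\ref{thm:master-bijections} to $\widehat M$ then gives bijectivity, and the canonical contraction/expansion of these degree-$2$ vertices is a bijection between properly bicolored mobiles and general mobiles that is the inverse image of the gadget. For the excess, a direct bookkeeping shows every edge of $M$ contributes net $0$ to the total excess on the sphere ($1$-way: one black--white edge and one bud; $2$-way: one white--white edge and two buds; $0$-way: one black--black edge and no buds), so the only imbalance comes from deleting $b_{f_0}$ together with its incident buds, which number exactly the outer degree; thus the excess is positive and equals the outer degree, as required.

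The main obstacle is item~(b), the \emph{commutation lemma}, and specifically the bud accounting. The asymmetry in the local rules is delicate: the $2$-way rule keeps a white--white edge \emph{and} emits two buds, whereas the $0$-way rule emits a black--black edge and \emph{no} buds, so the two gadgets cannot be chosen dually in the naive way and the $0$-way gadget must be arranged so that the two buds one would expect from its $1$-way edges are not produced. Pinning down the correct $0$-way gadget and checking, corner by corner around $v,v',b_f,b_{f'}$ and the new face/vertex, that the buds and bicolored edges of $\Phi_+(\widehat M)$ match those prescribed in Figure~\ref{fig:i-way-operation} after contraction is the technical heart; the required preservation of clockwise-minimality (the digon circuit being clockwise, hence allowed), accessibility, and admissibility under the gadget is then a routine check, exactly as the reduction sketched at the bottom of Figure~\ref{fig:bijection_biorientation} and mirroring the parallel-edge correspondence of Figure~\ref{fig:biorientation}(b). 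Once these are in place, all four assertions of the theorem transfer verbatim from Theorem~\ref{thm:master-bijections}.
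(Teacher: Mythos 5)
Your overall strategy is exactly the paper's own reduction (strip the weights, replace $0$-way and $2$-way edges by gadgets of $1$-way edges, invoke Theorem~\ref{thm:master-bijections}, transfer back through a degree-$2$-vertex correspondence on the mobile side), and your treatment of the weights and of the clockwise digon for $2$-way edges agrees with it. But there is a genuine gap at what you yourself single out as the technical heart, and the fix you propose there cannot work. You ask for a $0$-way gadget ``arranged so that the two buds one would expect from its $1$-way edges are not produced''; no such gadget exists. By Definition~\ref{def:one-way-operation}, \emph{every} $1$-way edge emits exactly one bud under the local transformation, so any gadget made of two $1$-way edges emits exactly two buds, whatever its arrangement. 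Moreover the arrangement is forced: both series edges must point toward the new vertex $m$ (if $m$ had indegree $0$ it would be unreachable, destroying accessibility; a directed path $v\to m\to v'$ would make the $0$-way edge traversable and create spurious directed paths and circuits, ruining your equivalence (a)). With this forced choice the two buds land at $b_f$ and $b_{f'}$, so $\Phi_+(\widehat M)$ carries two extra buds per $0$-way edge that pure contraction of the degree-$2$ white vertex does not remove; your commutation lemma (b) is therefore false as stated. One can see the mismatch numerically: contraction alone turns two black--white edges (contributing $+2$ to the excess) into one black--black edge (contributing $0$), hence drops the excess by $2$ per $0$-way edge, whereas $\Phi_+(\widehat M)$ and $\Phi_+(M)$ must both have excess equal to the outer degree.

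The paper's resolution is on the mobile side, not the gadget side. Its map $\lambda$ (Figure~\ref{fig:bijection_biorientation}, right) sends a general mobile to a \emph{bi-marked} properly bicolored mobile by inserting, in the middle of each black--black edge, a marked white vertex \emph{together with two buds}, placed in the corners following that edge in clockwise order around its two black endpoints (and a marked black vertex in the middle of each white--white edge). The inverse of $\lambda$ --- contraction \emph{plus deletion of precisely those two buds} --- is what recovers $\Phi_+(M)$ from $\Phi(\mu(M))$, and the corner-by-corner check you postpone is exactly the verification that the gadget's two unavoidable buds sit in the corners prescribed by $\lambda$. Note also that both $\mu$ and $\lambda$ must operate on \emph{marked} objects: the marks (which your write-up drops, hiding them in the word ``new'') are needed to know which degree-$2$ vertices and digon faces to collapse, i.e., to make the correspondences honestly invertible, since the original map may contain accidental degree-$2$ configurations. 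With these corrections your argument becomes the paper's proof: $\Phi\circ\mu=\lambda\circ\Phi_+$, hence $\Phi_+=\lambda^{-1}\circ\Phi\circ\mu$ is a composition of bijections.
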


\begin{remark}\label{rk:parameter-correspondence} 
There are many obvious parameter-correspondences for the bijections~$\Phi_\pm$. 
For a start, the vertices, inner faces, 0-way edges, 1-way edges, 2-way edges in a biorientation $O$ are in natural bijection with the white vertices, black vertices, black-black edges, black-white edges, white-white edges of the mobile $\Phi_+(O)$. The indegree and weight of a vertex of $O$ are equal to the indegree and weight of the corresponding white vertex of  $\Phi_+(O)$. The degree, clockwise degree and weight of an inner face  of $O$ 
are equal to the degree, indegree and weight of the corresponding black vertex of $\Phi_+(O)$. 
 
Similar correspondences exist for $\Phi_-$, and we do not list them all as they follow directly from the definitions. 
\end{remark}

Before proving Theorem~\ref{thm:master-bijections-biorientation}, we state 
its consequences for $\NN$-biorientations. 
We call $\NN$\emph{-mobile} a weighted mobile with weights in $\NN$ that are positive at half-edges 
incident to white vertices and are zero at (non-bud) half-edges incident to black vertices 
(observe that $\NN$-mobiles with weight $1$ on each edge can be identified with unweighted properly bicolored mobiles). 

Specializing the master bijection $\Phi_\pm$ (Theorem~\ref{thm:master-bijections-biorientation}) to 
$\NN$-biorientations we obtain (the list of correspondences of parameters  
is restricted to what is needed later on): 
 
\begin{theo}\label{thm:kmaster-bijections} 
Recall Definition~\ref{def:mBwB} for the sets $\mB$ and $\wB$ of weighted biorientations. 
\begin{itemize} 
\item The mapping $\Phi_+$ is a bijection between $\NN$-biorientations in $\mB$ and $\NN$-mobiles of positive excess. 
For $T=\Phi_+(O)$, the outer degree, degrees of inner faces, weights of vertices, weights of edges  
in $O$ correspond respectively to the excess, degrees of black vertices, weights of white vertices, 
weights of edges in $T$. 
\item The mapping $\Phi_-$ is a bijection between $\NN$-biorientations in $\wB$ and $\NN$-mobiles of negative excess. 
For $T=\Phi_-(O)$, the outer degree, degrees of inner faces, weights of inner vertices, weights of inner edges  
in $O$ correspond respectively to the opposite of the excess, degrees of black vertices, weights of white vertices, 
weights of edges in $T$.  
\end{itemize} 
\end{theo}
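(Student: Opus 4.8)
The plan is to deduce Theorem~\ref{thm:kmaster-bijections} from Theorem~\ref{thm:master-bijections-biorientation} purely by restriction: the latter already gives that $\Phi_+$ (resp.\ $\Phi_-$) is a bijection between $\mB$ (resp.\ $\wB$) and weighted mobiles of positive (resp.\ negative) excess, together with the edge-type and parameter dictionary recorded in Remark~\ref{rk:parameter-correspondence}. Hence the whole content to establish is that, under $\Phi_\pm$, an element of $\mB$ (resp.\ $\wB$) is an $\NN$-biorientation \emph{if and only if} its image is an $\NN$-mobile; the parameter correspondences of the theorem then follow by reading off Remark~\ref{rk:parameter-correspondence}. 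No new combinatorial construction is needed; the argument is a weight bookkeeping on the three local transformations.

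First I would check the forward direction edge by edge, using that $0$-way, $1$-way and $2$-way edges correspond respectively to black-black, black-white and white-white edges of the image. Let $O$ be an $\NN$-biorientation, so that every outgoing half-edge has weight $0$ and every ingoing half-edge has positive integer weight. For a $1$-way edge, the ingoing half-edge (weight $w>0$, incident to the white vertex $v$) and the outgoing half-edge (weight $w'=0$) are transformed into a black-white edge whose white half-edge receives $w>0$ and whose black half-edge receives $w'=0$; for a $0$-way edge both half-edges have weight $0$, yielding a black-black edge with two half-edges of weight $0$; for a $2$-way edge both half-edges have positive integer weight and the edge survives intact as a white-white edge. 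In every case the image carries positive integer weights on half-edges incident to white vertices and weight $0$ on non-bud half-edges incident to black vertices, i.e.\ it is an $\NN$-mobile.

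The converse is obtained by reading these same three rules backwards: a black-white mobile edge (white half-edge positive, black half-edge zero) can only arise from a $1$-way edge with positive ingoing and zero outgoing half-edge, a black-black edge from a $0$-way edge with two zero half-edges, and a white-white edge from a $2$-way edge with two positive half-edges. Thus the (unique) preimage furnished by Theorem~\ref{thm:master-bijections-biorientation} is a consistent $\NN$-weighted biorientation, i.e.\ an $\NN$-biorientation. For $\Phi_-$ the only extra point is the outer face: by Remark~\ref{rk:minimal_Norientation}, admissibility of an $\NN$-biorientation just says that all outer vertices and outer edges have weight $1$, and since these are exactly the objects erased in the definition of $\Phi_-$, the bookkeeping above applies verbatim to the inner edges, which are the ones becoming edges of the mobile.

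Finally I would read off the parameter correspondences from Remark~\ref{rk:parameter-correspondence}: the outer degree maps to the excess (to minus the excess for $\Phi_-$) and inner-face degrees to black-vertex degrees already in the weighted setting; the weight of a (inner) vertex equals the weight of the corresponding white vertex; and the weight of a (inner) edge is preserved because in each of the three cases the half-edge weights $w,w'$ are merely redistributed between the endpoints of the image edge, leaving their sum unchanged. I do not anticipate a real obstacle: the entire proof transcribes the already-established weighted bijection, and the only spot demanding a little care is verifying that the outer-face convention of $\Phi_-$ is compatible with the weight-$1$ characterization of admissibility provided by Remark~\ref{rk:minimal_Norientation}.
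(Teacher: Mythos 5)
Your proposal is correct and follows exactly the paper's route: the paper states Theorem~\ref{thm:kmaster-bijections} as an immediate specialization of Theorem~\ref{thm:master-bijections-biorientation} to $\NN$-biorientations, with the parameter dictionary read off from Remark~\ref{rk:parameter-correspondence}. Your edge-by-edge weight bookkeeping (positive weights stay on white half-edges, zeros on black half-edges, outer edges of weight $(0,1)$ being erased by $\Phi_-$) just makes explicit the restriction argument the paper leaves implicit.
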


The mappings  $\Phi_+$ and $\Phi_-$ are applied to some $\NN$-biorientations in Figure~\ref{fig:dual_kopening_ex}. 
 
\begin{figure} 
\begin{center} 
\includegraphics[width=\linewidth]{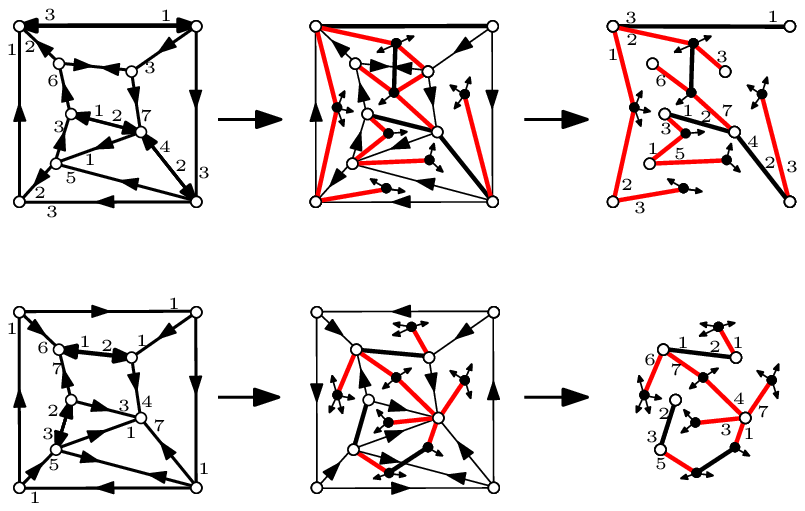} 
\end{center} 
\caption{Top: Master bijection $\Phi_+$ applied to an $\NN$-biorientation in $\mB$. 
Bottom: Master bijection $\Phi_-$  applied to an  $\NN$-biorientation in $\wB$. 
The vertex $b_{f_0}$ and the incident buds or edges are not represented. 
Weights (which are always zero) are not represented on outgoing half-edges in the orientations 
nor on half-edges incident to black vertices in the mobiles.} \label{fig:dual_kopening_ex} 
\end{figure} 
 

\begin{proof}[Proof of Theorem~\ref{thm:master-bijections-biorientation} assuming Theorem~\ref{thm:master-bijections} is proved] 
We show here that Theorem~\ref{thm:master-bijections-biorientation} (for 
weighted biorientations) follows from Theorem~\ref{thm:master-bijections} (for 
ordinary orientations). The idea of the reduction is illustrated in Figure~\ref{fig:bijection_biorientation}. 
First of all, observe that there is no need to consider weights in the proof. Indeed, through the mapping $\Phi_+$ applied to a weighted biorientation in $\mB$, the weight of each half-edge  is given to exactly one half-edge of the corresponding mobile, so that there is absolutely no weight constraint on the mobiles. Similarly, through the mapping $\Phi_-$ applied to a weighted biorientation in $\wB$, 
the weight of each inner half-edge is given 
to exactly one half-edge of the corresponding mobile, while the weights of outer half-edges  
 are fixed (outer edges are 1-way with weights $(0,1)$ in an admissible bi-orientation). 
 
We call \emph{bi-marked orientation} an orientation with some marked vertices of degree 2 and indegree 2  and some marked faces of degree 2 and clockwise degree 2. Observe that two marked vertices cannot be adjacent, two marked faces cannot be adjacent, and moreover a marked vertex cannot be incident to a marked face. Given an 
unweighted biorientation $O$ we denote by $\mu(O)$ the bi-marked orientation obtained by replacing every 0-way edge by two edges in series directed toward a marked vertex of degree 2 (and indegree 2), and replacing every 2-way edges by two edges in parallel directed clockwise around a marked face of degree 2 (and clockwise degree 2). The mapping $\mu$ illustrated in Figure~\ref{fig:bijection_biorientation} (left) is clearly a bijection between biorientations and bi-marked orientations. 
We call \emph{bi-marked mobile} a properly bicolored mobile with some set of non-adjacent marked vertices of degree 2, such that for each black-white edge $(b,w)$ with $w$ a marked white vertex of degree $2$, 
the next half-edge after $(b,w)$ in clockwise order around $b$ is a bud.  
Given a mobile $T$, we denote by $\lambda(T)$ the bi-marked mobile obtained by inserting a marked black vertex at the middle of each white-white edge, and inserting a marked white vertex $w$ at the middle of each black-black edge $e=(b,b')$ together with two buds: one in the corner following $e$ in clockwise order around $b$
and one in the corner following $e$ in clockwise order around $b'$. The mapping $\lambda$, which is illustrated in Figure~\ref{fig:bijection_biorientation} (right), is clearly a bijection between mobiles and bi-marked mobiles. 
Moreover it is clear from the definitions that the mapping $\Phi_{+}$ (resp. $\Phi_-$) for biorientations is equal to the mapping $\la\circ\Phi\circ\mu$ where the mapping $\Phi$ is the restriction of $\Phi_{+}$  (resp. $\Phi_-$) to ordinary orientations. Finally, note that the outer degree is the same in $O$ as in $\mu(O)$, 
and the excess is the same in $T$ as in $\lambda(T)$. 
 Thus Theorem~\ref{thm:master-bijections} implies that $\Phi_+$ (resp. $\Phi_-$) is a bijection on $\mB$ (resp. on $\wB$). 
\end{proof}


Before closing this section we state an additional claim which is useful for counting purposes. For any weighted biorientation $O$ in $\wB$, we call \emph{exposed} the buds of the mobile $T=\Phi_-(O)$ created by applying the local transformation to the outer edges of $O$ (which have preliminarily been returned). 
\begin{claim}\label{claim:exposed-corners} 
Let $O$ be a biorientation in $\wB$ and let $T=\Phi_-(O)$. There is a bijection between the set $\vec{O}$ of corner-rooted maps inducing the face-rooted map $O$ (i.e., the maps obtained by choosing a root-corner in the root-face of $O$), and the set  $\vec{T}$ of mobiles obtained from $T$ by marking an exposed bud.  Moreover, there is a bijection between the set $T_{\to}$ of mobiles obtained from $T$ by marking a non-exposed bud, and the set  $T_{\to\circ}$ of mobiles obtained from $T$ by marking a half-edge incident to a white vertex. 
\end{claim}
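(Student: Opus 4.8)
The plan is to treat each of the two bijections as the combination of a soft cardinality count with an explicit, construction-based correspondence that can be read off from the definition of $\Phi_-$ (Definition~\ref{def:master-bijections}). In both cases the two sets turn out to have the same size for an essentially formal reason, and the substance of the proof is exhibiting a canonical bijection rather than merely matching cardinalities.

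For the first statement, I would first record that since $O$ is admissible its root-face contour is a simple cycle, so the number of corner-rooted maps inducing $O$ equals the outer degree $p$: there are exactly $p$ corners in the root-face. I would then build the bijection as a chain
\[ \vec{O} \;\cong\; \{\text{corners of the root-face}\} \;\cong\; \{\text{outer edges of } O\} \;\cong\; \{\text{exposed buds of } T\} \;=\; \vec{T}. \]
The first identification is the definition of $\vec{O}$, and the second is the alternation of corners and outer edges along the simple root-face cycle (send each corner to the outer edge following it in a fixed rotational direction; either convention works). The last and only substantial identification is that the local transformation creates exactly one \emph{exposed} bud per outer edge. To prove this I would trace Definition~\ref{def:master-bijections} on the returned outer circuit: after returning the clockwise outer circuit, each outer edge becomes $1$-way with its ingoing half-edge at an outer vertex, and its local transformation creates an edge from $b_{f_0}$ to that outer vertex (landing in the root-face corner, hence deleted together with $b_{f_0}$) together with a single bud at the black vertex of the adjacent inner face. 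This pins down the exposed bud attached to each outer edge and closes the chain.

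For the second statement, the equality of cardinalities is immediate from the excess formula. Since $T=\Phi_-(O)$ has excess $-p$ (Theorem~\ref{thm:master-bijections-biorientation}), and the number of half-edges incident to white vertices equals (number of black-white edges) $+\,2\cdot$(number of white-white edges), which is exactly (excess) $+$ (number of buds), we get that the number of white-incident half-edges equals (number of buds) $-\,p$; as the number of exposed buds is exactly $p$, this equals the number of non-exposed buds, giving $|T_{\to\circ}|=|T_\to|$. To upgrade this to a bijection I would argue edge-locally: the surviving non-exposed buds are exactly those produced from inner $1$-way and $2$-way edges, and the surviving white-incident half-edges come from exactly the same edges. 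The key input is the admissibility property that every inner half-edge incident to an outer vertex is outgoing: it forces the ingoing endpoint of an inner $1$-way edge, and both endpoints of an inner $2$-way edge, to be inner vertices, and it forces the two faces flanking an inner edge to be inner faces, so that none of the relevant buds, edges, or half-edges is destroyed by the deletion step. A $1$-way edge then contributes one bud paired with one white half-edge, and a $2$-way edge contributes two of each, pairing the bud at $b_f$ (resp. $b_{f'}$) with the half-edge of the surviving edge at the corresponding white endpoint; this is the desired bijection.

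The main obstacle is the geometric tracing in the first statement: one must verify from the local rule applied to the returned outer circuit that each outer edge yields precisely one surviving bud, and locate that bud well enough to match it canonically with a root-face corner. Once this bookkeeping is carried out, the remaining identifications in both parts are formal, the only further care being, in the second statement, to confirm via the outer-vertex half-edge property that the edge-local pairing is between exactly the surviving objects on each side.
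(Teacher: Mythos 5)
Your constructive core---matching each exposed bud to a root-face corner by tracing the local transformation along the (returned) outer cycle, and pairing non-exposed buds with white-incident half-edges edge-locally through the inner $i$-way edges---is exactly the paper's construction (its maps $\gamma$ and $\gamma'$). But your framing contains a genuine error, and it is precisely the subtlety the paper flags right after stating the claim. The sets $\vec{O}$, $\vec{T}$, $T_{\to}$, $T_{\to\circ}$ consist of corner-rooted maps and marked mobiles, i.e.\ of isomorphism classes, not of raw corners or raw buds. Your opening assertion that ``the number of corner-rooted maps inducing $O$ equals the outer degree $p$'' is false in general: if the face-rooted map $O$ has a nontrivial automorphism (e.g.\ a rotational symmetry around the root-face, as for a directed $p$-cycle in $\wO$), distinct root-face corners yield the \emph{same} corner-rooted map, so $|\vec{O}|=p/|\mathrm{Aut}(O)|<p$. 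Likewise $\vec{T}$ is a quotient of the set of exposed buds, not that set itself, so the two endpoints of your chain $\vec{O}\cong\{\text{corners}\}\cong\{\text{outer edges}\}\cong\{\text{exposed buds}\}=\vec{T}$ are not identifications. The cardinality count in your second part has the same defect: equality of the numbers of raw non-exposed buds and raw white-incident half-edges does not by itself give $|T_{\to}|=|T_{\to\circ}|$, since both sets are quotients by $\mathrm{Aut}(T)$.

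The missing idea is equivariance. Because your correspondences are defined by purely local, convention-free rules (no ``symmetry-breaking'' choice is made anywhere), they commute with the automorphisms of $O$ and of $T$, which correspond to one another under $\Phi_-$; hence they descend to bijections between the orbit sets, which are exactly $\vec{O}$, $\vec{T}$, $T_{\to}$ and $T_{\to\circ}$. This descent step is the entire content of the paper's proof: it constructs the same $\gamma$ and $\gamma'$ that you do, and then argues only that they require no symmetry-breaking convention, hence respect the possible symmetries of $O$ and $T$ and induce the claimed bijections. Without that step your argument proves bijections between the wrong sets; with it, your argument becomes the paper's.
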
 
We point out that there is a little subtlety in Claim~\ref{claim:exposed-corners} related to the possible symmetries (for instance, it could happen that the $d$ corners of the root-face of $O$ give less than $d$ different corner-rooted maps). 
 
\begin{proof} 
The natural bijection $\gamma$ between  the $d$ exposed buds of $T$ and the $d$ corners in the root-face of $O$ (in which an exposed bud $b$ points toward the vertex incident to the corner $\gamma(b)$) does not require any ``symmetry breaking'' convention. Hence the bijection $\gamma$ respects the possible symmetries of $O$ and $T$, and therefore it induces a bijection between  $\vec{T}$ and $\vec{O}$. We now prove the second bijection. Let $B$ be the set of non-exposed buds of $T$, and let $H$ be the set of half-edges incident to a white vertex. Using the fact that the local transformation of each inner $i$-way edge of $O$ gives $i$ buds in $B$ and $i$ half-edges in $H$ (and that all buds in $B$, and all half-edges in $H$ are obtained in this way), one can easily define a bijection $\gamma'$ between $B$ and $H$ without using any ``symmetry breaking'' convention. Since the bijection $\gamma'$ respects the symmetries of $T$ it induces  a bijection between  $T_{\to}$  and $T_{\to\circ}$. 
\end{proof}

 
\bigskip 
 
\section{Bijections for $d$-angulations of girth $d$}\label{sec:bij_dang} 
In this section $d$ denotes an integer such that $d\geq 3$.   
We prove the existence of a class of ``canonical'' $\NN$-biorientations for $d$-angulations of girth $d$. This allows us to identify the class $\mC_d$ of $d$-angulations of girth $d$ with a 
class of $\NN$-biorientations in $\wB$. We then obtain a bijection between $\mC_d$ and a class of 
$\NN$-mobiles by specializing the master bijection $\Phi_-$ to these orientations.\\


\subsection{Biorientations for $d$-angulations of girth $d$}~\\ 
Let $D$ be a face-rooted $d$-angulation. A \emph{\ddm-orientation} of $D$ is an $\NN$-biorientation 
such that each inner edge has weight $d-2$, each inner vertex has weight $d$, each outer edge 
has weight $1$ and each outer vertex has weight $1$. 
Thus, \ddm-orientations are $\alpha/\beta$-orientations where $\alpha(v)=d$ for inner vertices, $\alpha(v)=1$ for outer vertices, $\beta(e)=d-2$ for inner edges, and $\beta(e)=1$ for outer edges. 
 By Lemma~\ref{lem:unique_minimal}, if $D$ has a \ddm-orientation, then it has a unique minimal one.
 A minimal \ddm-orientation is represented in Figure~\ref{fig:bijection_dk} (top-left).

\begin{theo}\label{thm:ori} 
A face-rooted $d$-angulation admits a \ddm-orientation if and only if it has girth~$d$. In this case, the minimal \ddm-orientation is in $\wB$. 
\end{theo}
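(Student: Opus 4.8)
The plan is to recognize a \ddm-orientation as the $\alpha/\beta$-orientation with $\alpha(v)=d,\ \beta(e)=d-2$ at inner vertices and edges and $\alpha(v)=\beta(e)=1$ at outer vertices and edges, and then to read everything off from Lemma~\ref{lem:exists-alpha}. Writing $\alpha(S)=\sum_{v\in S}\alpha(v)$ and $\beta(E_S)=\sum_{e\in E_S}\beta(e)$, existence of the orientation amounts to the balance~(i) together with $\alpha(S)\ge\beta(E_S)$ for all $S$ [condition~(ii)], and I will evaluate these sums by means of the counting Lemma~\ref{lem:counting}.

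For the implication ``girth~$d$ $\Rightarrow$ existence'', note first that girth~$d$ forces the root-face contour to be a simple $d$-cycle $C$ (a repeated vertex would cut out a shorter cycle), so there are exactly $d$ outer vertices and $d$ outer edges; condition~(i) then follows from the Euler relation, i.e.\ from the equality case of Lemma~\ref{lem:counting} with $p=d$. The real content is condition~(ii). As both sides are additive over connected components, I may take $H:=(S,E_S)$ connected. When $H$ meets $C$, I would apply Lemma~\ref{lem:counting} not to $H$ but to the augmented map $H^{+}:=H\cup C$: its outer face is bounded by $C$ and so has degree exactly $d$, while every bounded face is enclosed by a cycle of $D$ and hence has degree at least $d$ by the girth hypothesis. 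After accounting for the reduced outer weights, the resulting inequality collapses to the transparent statement $s\ge t$, where $s,t$ denote the numbers of outer vertices and outer edges of $H$; and $s\ge t$ holds because the outer part of $H$ is a subgraph of the cycle $C$ (a disjoint union of paths, or all of $C$). When $H$ avoids $C$, Lemma~\ref{lem:counting} applied to $H$ itself already gives the stronger bound $d|S|-(d-2)|E_S|\ge d>0$.

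For the converse I argue by contraposition. Given a cycle $\gamma$ of length $\ell<d$, with the root-face lying strictly on one side, I would let $S'$ consist of all vertices except those strictly enclosed on the far side of $\gamma$. Then $(S',E_{S'})$ is $D$ with the interior of $\gamma$ erased, so its faces are the degree-$d$ faces of $D$ on the root side together with a single new ``hole'' of degree $\ell$. Taking the hole as the marked face in the equality case of Lemma~\ref{lem:counting}, and using the balance~(i) to eliminate the boundary contribution, I find $\alpha(S')-\beta(E_{S'})=\ell-d<0$, contradicting condition~(ii); hence no \ddm-orientation exists. Since a $d$-angulation always has girth at most $d$, the two implications give the stated equivalence.

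It remains to place the minimal orientation in $\wB$. It exists and is unique by Lemma~\ref{lem:unique_minimal}, and is minimal by construction. It is admissible: the root-face contour is the simple cycle $C$, and outer vertices and outer edges have weight~$1$, which by Remark~\ref{rk:minimal_Norientation} is precisely conditions~(ii)--(iii) of admissibility. Consequently $C$ is a directed circuit, which must be clockwise since a counterclockwise one would violate minimality; thus each outer edge is $1$-way with a non-root face on its right, so the orientation is clockwise-minimal. For accessibility I would use condition~(iii) of Lemma~\ref{lem:exists-alpha} with $u$ an outer vertex: the computation above gives $\alpha(S)-\beta(E_S)\ge s-t\ge 0$, with equality forcing the outer part of $S$ to be empty or all of $C$; the empty case is excluded by the stronger bound of paragraph two, and $u\notin S$ excludes the full cycle, so $\alpha(S)>\beta(E_S)$ whenever $u\notin S$ and the orientation is accessible. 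The main obstacle throughout is condition~(ii): it is where the girth hypothesis genuinely enters, and getting the interaction between the reduced boundary weights and the bulk weights right---via the auxiliary map $H\cup C$---is the crux, after which the converse and the $\wB$-membership fall out of the same counting identity.
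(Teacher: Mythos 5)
Your sufficiency argument and your derivation of membership in $\wB$ are correct and follow the paper's strategy: view \ddm-orientations as $\alpha/\beta$-orientations, verify conditions (i)--(iii) of Lemma~\ref{lem:exists-alpha} via Lemma~\ref{lem:counting}, then get admissibility from Remark~\ref{rk:minimal_Norientation} and clockwise-minimality from the fact that the outer circuit cannot be counterclockwise in a minimal orientation. Your treatment of condition (ii) is a genuine (and clean) variant of the paper's: where the paper distinguishes three cases according to whether the connected subgraph $G_S$ contains all, some, or none of the outer vertices (reducing ``some'' to ``all'' by adjoining the missing outer vertices and counting $|B|>|A|$), you adjoin the whole outer cycle $C$ and apply Lemma~\ref{lem:counting} to $H\cup C$, obtaining the uniform bound $\alpha(S)-\beta(E_S)\geq s-t\geq 0$. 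This works (note that $H\cup C$ is connected and every non-root face of it has a cycle of $D$ in its boundary, hence degree at least $d$), and it pays off in the accessibility step, where your equality analysis ($s=t$ forces the outer part of $S$ to be empty or all of $C$) matches the paper's observation that tightness can only occur when $S$ contains all outer vertices.

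The ``only if'' direction, however, has a genuine gap. Your closing claim that ``a $d$-angulation always has girth at most $d$'' is false, and the paper explicitly flags the exception: trees. For even $d$, a tree with $d/2$ edges is a $d$-angulation (its unique face has degree $d$) with no cycle at all, so its girth is not $d$; your contraposition starts from a cycle of length $\ell<d$ and therefore never excludes a \ddm-orientation in this case. The theorem is still true there, but it needs the paper's separate (easy) argument: in a tree every vertex and edge is outer, so condition (i) of Lemma~\ref{lem:exists-alpha} would force $|V|=|E|=|V|-1$, which is impossible. A second, more minor issue: your set $E_{S'}$ contains every edge with both ends in $S'$, in particular chords of $\gamma$ drawn on the far side of $\gamma$; when such chords exist, $(S',E_{S'})$ is not ``$D$ with the interior of $\gamma$ erased'', the hole is subdivided, and the equality case of Lemma~\ref{lem:counting} does not apply as you state it. This is repairable either by taking $\gamma$ innermost among cycles of length less than $d$, or by noting that each such chord only decreases $\alpha(S')-\beta(E_{S'})$ (one gets $\alpha(S')-\beta(E_{S'})=\ell-d-(d-2)c$ with $c$ the number of chords), so condition (ii) is still violated. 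The paper sidesteps this entirely by counting inside $\gamma$ rather than outside: the total weight of the vertices strictly inside, $d|V'|$, is at most the total weight of the edges strictly inside (chords included), which Lemma~\ref{lem:counting} evaluates as $d|V'|+\ell-d<d|V'|$, a contradiction.
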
 
 
\begin{proof} 
We first prove the necessity of having girth $d$. If $D$ has not girth $d$, then it is either a tree (its girth is infinite), or it has a cycle of length $c<d$. If $D=(V,E)$ is a tree, then $|E|=|V|-1$, 
all edges have weight $1$ and all vertices have weight $1$ (since all vertices and 
edges are incident to the outer face), which  
contradicts Condition~(i). We now suppose that $D$ has a simple cycle $C$ of length $c<d$. Let $D'$ be the submap of $D$ enclosed by $C$ which does not contain the root-face of $D$. Let $V',E'$ be respectively the set of vertices and edges strictly inside $C$. By Lemma~\ref{lem:counting}, $(d-2)|E'|=d|V'|+c-d<d|V'|$. This prevents the existence of a \ddm-orientation for $D$ because in such an orientation $(d-2)|E'|$ corresponds to the sum of weights of the edges in $E'$, while $d|V'|$ corresponds to the sum of weights of the vertices in $V'$. 
 
We now consider a $d$-angulation $D$ of girth $d$ and want to prove that $D$ has a \ddm-orientation. We use Lemma \ref{lem:exists-alpha} with the map $D=(V,E)$, the function $\alpha$ taking value $d$ on inner vertices and value $1$ on outer vertices, and the function $\beta$ taking value $d-2$ on inner edges and value $1$ on outer edges.   
We first check Condition (i). Observe that $D$ has $d$ distinct outer vertices (otherwise the outer face would create a cycle of length less than $d$). Hence $\sum_{v\in V}\alpha(v)=d(|V|-d)+d$. This is equal to $(d-2)(|E|-d)+d=\sum_{e\in E}\beta(e)$ by Lemma~\ref{lem:counting} (with $p=d$), hence Condition~(i) holds.  
We now check Conditions (ii) and (iii). It is in fact enough to check these conditions for \emph{connected} induced subgraphs (indeed, for  $S\subseteq V$,  
the quantities $\sum_{v\in S}\alpha(v)$ and $\sum_{e\in E_S}\beta(e)$ are additive over the connected components of $G_S=(S,E_S)$).   
Let $S$ be a subset of vertices such that the induced subgraph $G_S=(S,E_S)$ is connected.\\  
\textbf{Case (1)}: $G_S$ contains all outer vertices (and hence all outer edges) of $D$.  
Then $\sum_{v\in S}\alpha(v)=d\cdot(|S|-d)+d$, and $\sum_{e\in E_S}\beta(e)=(d-2)\cdot(|E_S|-d)+d$. Since 
$D$ has girth $d$, all faces of $G_S$ have degree at least $d$, so Lemma~\ref{lem:counting} gives $(d-2)\cdot(|E_S|-d)\leq d\cdot(|S|-d)$, that is, $\sum_{v\in S}\alpha(v)\geq  \sum_{e\in E_{S}}\beta(e)$.\\  
\textbf{Case (2)}: $S$ contains at least one outer vertex but not all outer vertices of $D$.  
Let $S'$ be the union of $S$ and of the set of outer vertices of $D$. Let $A$ be the set of outer vertices in $S'$ but not in $S$ 
and  let $B$ be the set of outer edges in $E_{S'}$ but not in $E_S$; note that $|B|>|A|$. By Case~(1), $\sum_{v\in S'}\alpha(v)\geq \sum_{e\in E_{S'}}\beta(e)$. Moreover,  $\sum_{v\in S}\alpha(v)=\sum_{v\in S'}\alpha(v)-|A|$ (because $A=S'\backslash S$) and $\sum_{e\in E_S}\beta(e)\leq \sum_{e\in E_{S'}}\beta(e)-|B|$ (because $B\subseteq E_{S'}\backslash E_S$). Hence $\sum_{v\in S}\alpha(v)>\sum_{e\in E_S}\beta(e)$.\\ 
\textbf{Case (3)}: $S$ contains no outer vertex. In that case $\sum_{v\in S}\alpha(v)=d\cdot |S|$ and $\sum_{e\in E_S}\beta(e)=(d-2)\cdot|E_S|$.  Since all faces in $G_S$ have degree at least $d$, Lemma~\ref{lem:counting} gives $(d-2)\cdot(|E_S|-d)\leq d\cdot(|S|-d)$. Thus, $(d-2)|E_S|<d|S|$,  that is, $\sum_{e\in E_S}\beta(e)<\sum_{v\in S}\alpha(v)$.\\  
To summarize, Condition~(ii) holds in all three cases. Moreover, the inequality $\sum_{v\in S}\alpha(v)\geq \sum_{e\in E_S}\beta(e)$ might be tight only in Case~(1), where all outer vertices are in $S$. Hence Condition~(iii) holds with respect to any outer vertex.  
 
We have proved that that any $d$-angulation $D$ of girth $d$ admits a \ddm-orientation, and that every
\ddm-orientation is accessible from any outer vertex. By Lemma~\ref{lem:unique_minimal} $D$ has a minimal \ddm-orientation, and by Remark~\ref{rk:minimal_Norientation} this orientation is admissible, hence clockwise-minimal. Thus, the minimal \ddm-orientation of $D$ is in~$\wB$.  
\end{proof}

\medskip 
 
\subsection{Specializing the master bijection $\Phi_-$}~\\ 
By Theorem~\ref{thm:ori}, the class $\mC_d$ of $d$-angulations of girth $d$ can be identified with the subset 
$\mE_d$ of $\NN$-biorientations in $\wB$ such that every face has degree $d$, every inner edge has weight $d-2$, 
and every inner vertex has weight $d$. We now characterize the mobiles that are in bijection with the subset $\mE_d$. 
 
We call \emph{$d$-branching mobile} an $\NN$-mobile such that every black vertex has degree $d$, 
every white vertex has weight $d$, and every edge has weight $d-2$. A 5-branching mobile is shown in Figure~\ref{fig:bijection_dk} (top-right). 
By Theorem~\ref{thm:kmaster-bijections}, the master bijection $\Phi_-$ induces a bijection between orientations in $\mE_d$ with $n$ inner faces and $d$-branching mobiles of excess $-d$ with $n$ black vertices. 
The additional condition of having excess $-d$ is in fact redundant (hence can be omitted) as claimed below. 
 
\begin{claim}\label{claim:dreg} 
Any $d$-branching mobile has excess $-d$. 
\end{claim}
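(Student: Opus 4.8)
The plan is to prove the identity by a direct counting argument: I would express the excess in terms of the numbers of black and white vertices, and then eliminate these quantities using the tree structure together with the three defining conditions (black vertices of degree $d$, white vertices of weight $d$, edges of weight $d-2$).

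First I would fix notation: let $B$ and $W$ be the numbers of black and white vertices, and let $e_{bw}$, $e_{ww}$, $e_{bb}$ be the numbers of black-white, white-white and black-black edges. The first observation is that a $d$-branching mobile has no black-black edge: in an $\NN$-mobile every non-bud half-edge incident to a black vertex has weight $0$, so a black-black edge would have weight $0$, contradicting the requirement that every edge have weight $d-2>0$ (recall $d\geq 3$). Since a mobile is a plane tree, this yields the relation $e_{bw}+e_{ww}=B+W-1$.

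Next I would extract two equations from the degree and weight conditions. Summing the black-vertex degree condition over all $B$ black vertices, and recalling that the degree of a black vertex (the quantity matching the inner-face degree via Remark~\ref{rk:parameter-correspondence}) counts its incident edges \emph{together with} its incident buds, gives $dB=e_{bw}+\text{(number of buds)}$, since each black-white edge contributes one half-edge at a black vertex and there are no black-black edges; hence $\text{(buds)}=dB-e_{bw}$. Summing the white-vertex weight condition over all $W$ white vertices, and using that each edge incident to a white vertex contributes its full weight $d-2$ to the total white weight (for a black-white edge the black half-edge has weight $0$, while for a white-white edge both half-edges sit at white vertices), gives $dW=(d-2)(e_{bw}+e_{ww})$, which by the tree relation becomes $dW=(d-2)(B+W-1)$, i.e. $2W=(d-2)(B-1)$.

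Finally I would substitute into the definition of the excess, namely $e_{bw}+2e_{ww}-\text{(buds)}$. Replacing the number of buds by $dB-e_{bw}$ and collecting terms gives $2(e_{bw}+e_{ww})-dB=2(B+W-1)-dB$; inserting $2W=(d-2)(B-1)$ makes the coefficient of $B$ vanish and leaves exactly $-d$. The computations are routine linear elimination; the only point requiring genuine care, and hence the real crux, is the bookkeeping of the several notions attached to mobiles — in particular remembering that the degree of a black vertex includes its buds whereas its weight does not, that buds carry no weight, and that the excess weights white-white edges twice. Once these conventions are pinned down the identity drops out; as a consistency check, the smallest instance is a single black vertex carrying $d$ buds, whose excess is $-d$.
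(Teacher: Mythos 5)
Your proof is correct and follows essentially the same route as the paper's: linear elimination using the tree relation, the black-vertex degree condition, and the white-vertex weight condition. The only difference is cosmetic bookkeeping — you track $e_{bw}$ and $e_{ww}$ separately where the paper uses total edges and white-white edges — and you make explicit the absence of black-black edges (since they would have weight $0\neq d-2$), a fact the paper's computation uses implicitly.
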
 
\begin{proof} 
Let $\ee,\ee',\oo$ be respectively the number of edges, white-white edges and buds of a $d$-branching mobile. 
 By definition, the excess is $\delta=\ee+\ee'-\oo$. Let $\bb$ and $\ww$ be respectively the number of black and white vertices. The degree condition for black vertices gives $\ee-\ee'+\oo=d\bb$, so that  $\delta=2\ee-d\bb$. 
 The weight condition on white vertices gives $(d-2)\ee=d\ww$ and combining this relation with $\bb+\ww=\ee+1$ (so as to eliminate $\ww$) gives $\bb=\frac{2}{d}\ee+1$, hence $\delta=-d$. 
\end{proof} 
 
From Claim~\ref{claim:dreg} and the preceding discussion we obtain: 
\begin{theo}\label{thm:bij_dangul} 
For $d\geq 3$ and $n\geq 1$, face-rooted $d$-angulations of girth $d$ with $n$ inner faces are in bijection with $d$-branching mobiles with $n$ black vertices. 
\end{theo}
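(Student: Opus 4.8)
The plan is to obtain the bijection as a composition of two bijections already assembled in the discussion preceding the statement: the identification of $\mC_d$ with the biorientation class $\mE_d$, followed by the restriction of the master bijection $\Phi_-$ to $\mE_d$.

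First I would use Theorem~\ref{thm:ori} to send each face-rooted $d$-angulation of girth $d$ to its minimal \ddm-orientation, which that theorem guarantees lies in $\wB$. Since every biorientation in $\wB$ is minimal (being clockwise-minimal) and the minimal \ddm-orientation is unique by Lemma~\ref{lem:unique_minimal}, the set $\mE_d$ of \ddm-orientations lying in $\wB$ contains exactly one biorientation per such $d$-angulation. This yields a bijection $\mC_d\leftrightarrow\mE_d$ that preserves the number of inner faces.

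Next I would apply the master bijection $\Phi_-$ of Theorem~\ref{thm:kmaster-bijections}, restricted to $\mE_d\subseteq\wB$. The parameter correspondences listed there send the degrees of inner faces, the weights of inner vertices, and the weights of inner edges of a biorientation to the degrees of black vertices, the weights of white vertices, and the weights of edges of the associated $\NN$-mobile (and, by Remark~\ref{rk:parameter-correspondence}, inner faces correspond to black vertices). Hence the three defining conditions of $\mE_d$ --- every inner face has degree $d$, every inner vertex has weight $d$, every inner edge has weight $d-2$ --- translate term by term into the three defining conditions of a $d$-branching mobile, and $n$ inner faces correspond to $n$ black vertices.

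The last point to address is that Theorem~\ref{thm:kmaster-bijections} outputs $\NN$-mobiles of negative excess, so a priori the image is the set of $d$-branching mobiles of negative excess rather than all of them. But Claim~\ref{claim:dreg} shows that every $d$-branching mobile has excess exactly $-d<0$, so this restriction is vacuous and can be dropped. Composing the two bijections then gives the claimed bijection. I do not anticipate a genuine obstacle: the argument is essentially bookkeeping, gluing Theorems~\ref{thm:ori} and~\ref{thm:kmaster-bijections} to Claim~\ref{claim:dreg}. The only step deserving a moment's care is checking that the identification $\mC_d\leftrightarrow\mE_d$ is single-valued, which is exactly where uniqueness of the minimal orientation (Lemma~\ref{lem:unique_minimal}) enters.
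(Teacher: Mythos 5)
Your proposal is correct and follows essentially the same route as the paper's own proof: identification of $\mC_d$ with $\mE_d$ via Theorem~\ref{thm:ori} and the uniqueness of the minimal \ddm-orientation (Lemma~\ref{lem:unique_minimal}), then specialization of $\Phi_-$ through the parameter correspondences of Theorem~\ref{thm:kmaster-bijections}, then Claim~\ref{claim:dreg} to discard the excess condition. The only cosmetic imprecision is that the image of $\mE_d$ under $\Phi_-$ is a priori the set of $d$-branching mobiles of excess exactly $-d$ (since all faces of an orientation in $\mE_d$, including the root-face, have degree $d$) rather than of arbitrary negative excess, but Claim~\ref{claim:dreg} renders this distinction immaterial.
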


\begin{figure} 
\begin{center} 
\includegraphics[width=\linewidth]{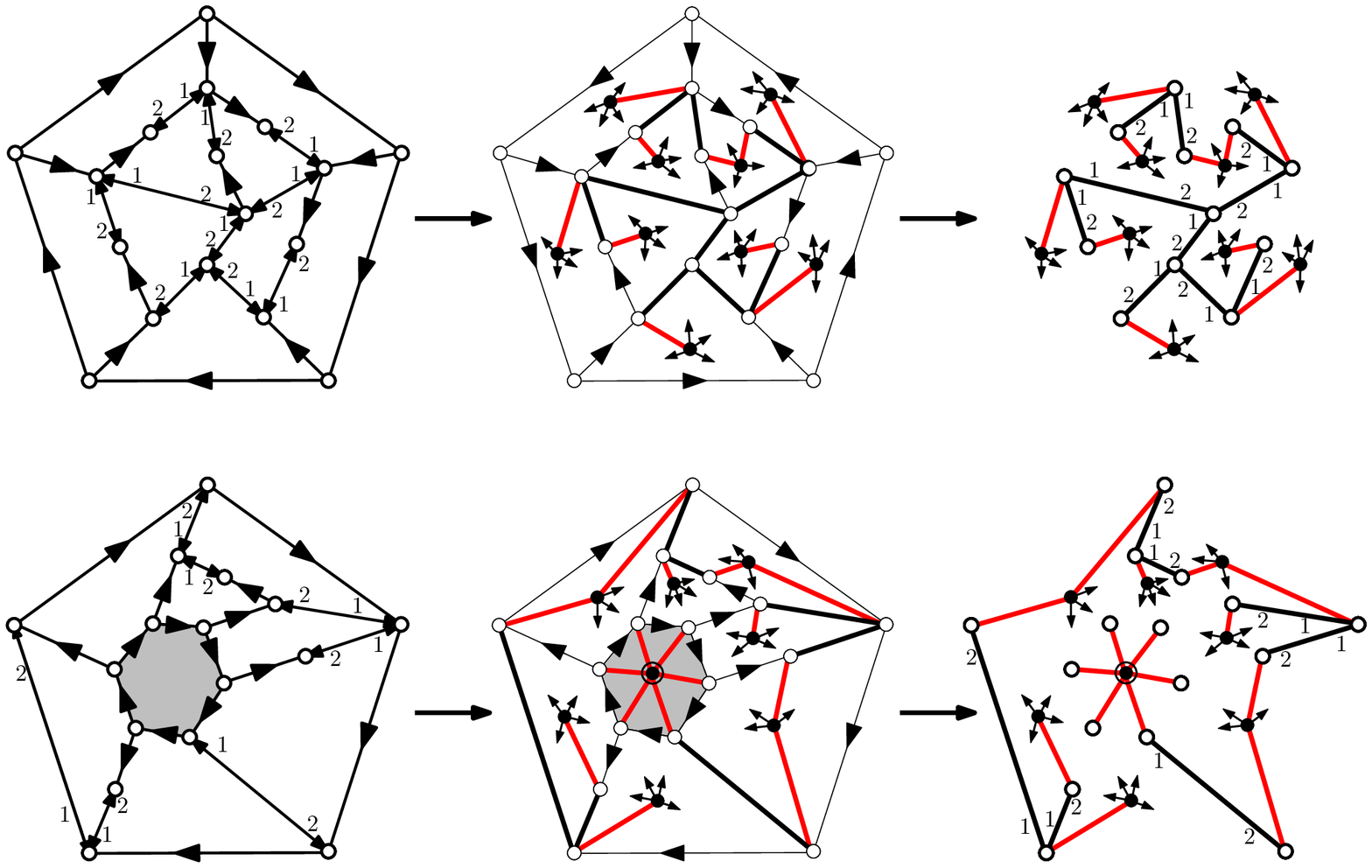} 
\end{center} 
\caption{Top: a 5-angulation $D$ of girth $5$ and the 5-branching mobile $\Phi_-(D)$. Bottom: a non-separated 6-annular 
5-angulation $A$ of girth $5$ and the (6,5)-branching mobile $\Phi_+(A)$. The weights are only indicated on 2-way edges.} 
\label{fig:bijection_dk} 
\end{figure}

Theorem~\ref{thm:bij_dangul} is illustrated in Figure~\ref{fig:bijection_dk} (top-part). Before closing this section we mention that a slight simplification appears in the definition of $d$-branching mobiles when $d$ is even. 
\begin{prop}\label{prop:bip} 
If $d=2b$ is even, then every half-edge of the $d$-branching mobile has an even weight (in $\{0,\ldots,d-2\}$).  Similarly the minimal \ddm-orientation of any $d$-angulation has only even weights.  
\end{prop}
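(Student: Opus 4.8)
The plan is to prove the statement for the $d$-branching mobile and then transfer it to the minimal \ddm-orientation through the master bijection $\Phi_-$. Write $d=2b$. In a $d$-branching mobile the half-edges incident to black vertices carry weight $0$ by the definition of an $\NN$-mobile, and a black-white edge has weight $d-2$ split as $0$ on the black side and $d-2=2b-2$ on the white side; both values are even, so the only half-edges whose parity is not immediately clear are the two halves of each white-white edge. Thus the whole content of the statement concerns white-white edges, and I would isolate them from the start.

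For a white-white edge the two half-weights sum to the edge weight $d-2$, which is even, so the two halves have equal parity; I would record this common parity as $y_e\in\{0,1\}$. The weight condition at a white vertex $w$ says that the sum of its incident half-weights equals $d=2b$, an even number (white vertices carry no buds, so all incident half-edges count). The contributions coming from black-white edges incident to $w$ are each equal to $d-2$ and hence even, so the sum of the half-weights of the white-white edges at $w$ is even; equivalently, the number of white-white edges $e$ at $w$ with $y_e=1$ is even. In other words, the set of white-white edges with $y_e=1$ forms an \emph{even subgraph} (every vertex has even degree) of the mobile.

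The key point, which I expect to be the only real obstacle, is to turn this parity constraint into the vanishing of all the $y_e$. Here I would use that the mobile is a \emph{tree}: any subgraph of a tree is a forest, and a forest in which every vertex has even degree must be edgeless, since a nonempty forest has a leaf, a vertex of degree $1$. Hence $y_e=0$ for every white-white edge, so both halves of every white-white edge are even; together with the earlier remarks, every half-edge weight of the mobile is even, and it lies in $\{0,\dots,d-2\}$ because weights are nonnegative and bounded above by the edge weight $d-2$. Finally I would transfer this to the orientation via $\Phi_-$: a $2$-way inner edge of the minimal \ddm-orientation becomes a white-white edge of the mobile carrying the very same half-weights, since the local transformation leaves a $2$-way edge intact; a $1$-way inner edge becomes a black-white edge carrying only the even weights $0$ and $d-2$; and there are no $0$-way inner edges because $d-2>0$. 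Consequently every inner half-edge of the minimal \ddm-orientation is even as well (the outer edges being forced to the fixed weights $0$ and $1$ by admissibility).
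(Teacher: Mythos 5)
Your proof is correct, and it splits naturally into two halves that compare differently with the paper's proof. For the mobile half you follow essentially the paper's route: the paper calls an edge \emph{odd} when both its half-weights are odd (possible since the edge weight $d-2$ is even, forcing equal parities), observes that the odd edges form a forest inside the tree, and gets a contradiction from a leaf of that forest, whose vertex weight would be odd while black and white vertices have even weights $0$ and $d$; your ``the odd white-white edges form an even subgraph of a tree, hence are absent'' is the same parity-plus-leaf argument in different clothing. Where you genuinely diverge is the orientation half. The paper proves it \emph{directly} on the orientation: odd inner edges must be $2$-way (both half-weights positive), and in a minimal orientation the $2$-way edges cannot contain a cycle --- such a cycle would yield a counterclockwise circuit --- so the odd edges again form a forest and the same leaf argument applies; minimality is thus used in an explicit combinatorial way. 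You instead pull the result back through $\Phi_-$: minimality enters only through Theorem~\ref{thm:ori}, which places the minimal \ddm-orientation in $\wB$, and then the weight-transfer rules of the local transformations ($2$-way edges kept intact with their half-weights, $1$-way inner edges becoming black-white edges with weights $0$ and $d-2$, no $0$-way inner edges since $d-2>0$) carry evenness from the $d$-branching mobile back to the inner half-edges of the orientation. Both routes are legitimate at this point of the paper, since Theorems~\ref{thm:ori} and~\ref{thm:kmaster-bijections} precede the proposition and do not depend on it, so there is no circularity. The paper's argument buys self-containedness (it never invokes the bijection) and makes visible exactly where minimality is needed --- indeed the statement is false for non-minimal \ddm-orientations, which can carry odd weights on a $2$-way cycle; your argument buys economy (the parity argument is done once, on the tree, where it is easiest) and is more careful on a point the paper glosses over: the outer edges of a \ddm-orientation carry weights $0$ and $1$, so the ``only even weights'' claim can only concern the inner half-edges, a caveat you state explicitly.
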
 
 
\begin{proof} 
Let $d$ be even and let $M$ be a $d$-branching mobile. 
Edges of $M$ have either two half-edges with even weights or two half-edges with odd weights, in which case we call them \emph{odd}. 
 We suppose by contradiction that the set of odd edges is non-empty.  In this case, there exists a vertex $v$ incident to exactly one odd edge (since the set of odd edges form a non-empty forest). Hence, the weight of $v$ is odd, which is impossible since the weight of black vertices is 0 and the weight of white vertices is $d$. 
 
Similarly, the edges of a \ddm-orientation have either two half-edges with even weights or two half-edges with odd weights, in which case we call them \emph{odd}. Moreover odd edges are 2-way hence they form a forest in the minimal \ddm-orientation (otherwise there would be a counterclockwise circuit). Hence, if there are odd edges, there is a vertex $v$ incident to only 1 odd edge, contradicting the requirement that every vertex has weight $d$. 
\end{proof} 
 
\begin{remark}\label{rk:2bangul} 
For $b\geq 2$, we call \emph{\bbm-orientation} an $\NN$-biorientation of a $2b$-angulation such that inner vertices have weight $b$, outer vertices have weight $1$, inner edges have weight $b-1$ and outer edges have weight $1$. We also call $b$-\emph{dibranching mobile} an $\NN$-mobile where black vertices have degree $2b$, white vertices have weight $b$, and edges have weight $b-1$.  
Then, in the case $d=2b$, Proposition~\ref{prop:bip} ensures that the bijection of Theorem~\ref{thm:bij_dangul} simplifies (dividing by $2$ the weights)  to a bijection between $2b$-angulations of girth $2b$ with $n$ inner faces and $b$-dibranching mobiles with $n$ black vertices. 
In particular, for $b=2$ (simple quadrangulations) the  $2/1$-orientations are ordinary orientations  with indegree $2$ at each inner vertex. These are precisely the orientations considered in~\cite{Fraysseix:Topological-aspect-orientations} and used for defining the bijection in~\cite[Sec.2.3.3]{Schaeffer:these}.
\end{remark}

\medskip 
 
\section{Bijections for $d$-angulations of girth $d$ with a $p$-gonal boundary}\label{sec:bij_dang_bond} 
In this section $p$ and $d$ denote integers satisfying $3\leq d\leq p$. 
We deal here with $d$-angulations with a  boundary. 
We call \emph{$p$-gonal $d$-angulation} a map having one marked face of degree $p$, called \emph{boundary face},  whose contour is simple (its vertices are all distinct) and all the other faces of degree $d$.  We call \emph{$p$-annular $d$-angulation} a face-rooted $p$-gonal $d$-angulation whose root-face is not the boundary face. Our goal is to obtain a bijection for $p$-annular $d$-angulations of girth $d$ by a method similar to the one of the previous section:  we first exhibit a canonical orientation for these maps and then consider the restriction of the master bijection to the canonical orientations. However, an additional difficulty arises: one needs to factorize $p$-annular $d$-angulations into two parts, one being a $d$-angulation 
of girth $d$ (without boundary) and the other being a \emph{non-separated} $p$-annular $d$-angulation of girth $d$ (see definitions below). The master bijections $\Phi_-$ and $\Phi_+$ then give bijections for each part.\\

\subsection{Biorientations for $p$-annular $d$-angulations}~\\
 Let $A$ be a $p$-annular $d$-angulation. The $p$ vertices incident to the boundary face are called \emph{boundary vertices}. A \emph{pseudo \ddm-orientation} of $A$ is an $\NN$-biorientation such that the edges have weight $d-2$, the non-boundary vertices have weight $d$,  and the boundary face is a clockwise circuit of 1-way edges. By Lemma~\ref{lem:counting}, the number $\vv$ of vertices and $\ee$ of edges of $A$ satisfy 
$(d-2)\ee=d(\vv-p)+dp-p-d$. Since $d(\vv-p)$ is the sum of the weights of the non-boundary vertices, this proves the following claim. 
\begin{claim}\label{claim:sum_in} 
The sum of the weights of the boundary vertices in a pseudo \ddm-orientation of a $p$-annular $d$-angulation is 
$dp-p-d$. 
\end{claim}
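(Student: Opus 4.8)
The plan is to compute the total weight of all vertices in two ways and then isolate the contribution of the boundary vertices. First I would record the elementary fact that in any $\NN$-biorientation the sum of the weights of the vertices equals the sum of the weights of the edges: the weight of a vertex is the total weight of its ingoing half-edges, so summing over vertices counts each ingoing half-edge exactly once, and since outgoing half-edges have weight $0$ this equals the sum over all half-edges, hence the sum of the edge weights. Applied to a pseudo \ddm-orientation, where every edge has weight $d-2$, this gives that the total vertex weight equals $(d-2)\ee$.

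Next I would split this total according to whether a vertex is a boundary vertex or not. Because the boundary face has degree $p$ and a simple contour, there are exactly $p$ boundary vertices, and hence $\vv-p$ non-boundary vertices, each of weight $d$ by definition of a pseudo \ddm-orientation. Therefore the non-boundary vertices contribute $d(\vv-p)$, and the sum of the weights of the boundary vertices equals $(d-2)\ee-d(\vv-p)$.

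It then remains to evaluate $(d-2)\ee-d(\vv-p)$, and this is where Lemma~\ref{lem:counting} enters. The crucial point is that in a $p$-annular $d$-angulation the root-face is \emph{not} the boundary face, so every face other than the boundary face---including the root-face---has degree exactly $d$. Taking $f_0$ to be the boundary face (of degree $p$), the equality case of Lemma~\ref{lem:counting} applies and yields $(d-2)(\ee-p)=d(\vv-p)+p-d$, that is $(d-2)\ee=d(\vv-p)+dp-p-d$. Subtracting $d(\vv-p)$ gives the announced value $dp-p-d$ for the total weight of the boundary vertices.

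The computation is essentially bookkeeping, so I do not expect a genuine difficulty; the one point requiring care is to justify using the \emph{equality} clause of Lemma~\ref{lem:counting}, which is legitimate precisely because the boundary face is the unique face whose degree differs from $d$ (the root-face having degree $d$ rather than $p$).
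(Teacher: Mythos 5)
Your proposal is correct and matches the paper's own argument: both apply the equality case of Lemma~\ref{lem:counting} with $f_0$ the boundary face (legitimate since all other faces, including the root-face, have degree exactly $d$) to get $(d-2)\ee=d(\vv-p)+dp-p-d$, and then subtract the contribution $d(\vv-p)$ of the non-boundary vertices from the total vertex weight $(d-2)\ee$. The only difference is that you spell out explicitly the bookkeeping identity (total vertex weight equals total edge weight) that the paper leaves implicit.
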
 
 
We now prove that  pseudo \ddm-orientations characterize $p$-annular $d$-angulations of girth $d$. 
 
\begin{prop}\label{lem:exists_simple} 
A $p$-annular $d$-angulation $A$ admits a pseudo \ddm-orientation if and only if it has girth $d$. In this case, $A$ admits a unique minimal pseudo \ddm-orientation, and this orientation is accessible 
from any boundary vertex. 
\end{prop}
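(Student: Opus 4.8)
The plan is to follow the blueprint of the proof of Theorem~\ref{thm:ori}, reducing existence to the flow criterion of Lemma~\ref{lem:exists-alpha} and uniqueness of a minimal orientation to Lemma~\ref{lem:unique_minimal}. The genuinely new feature is that in a pseudo \ddm-orientation the weights of the boundary vertices are not prescribed individually: only their sum $dp-p-d$ is fixed (Claim~\ref{claim:sum_in}), and it may be distributed among the boundary vertices in several ways. Hence the $\alpha/\beta$-orientation machinery cannot be applied to $A$ directly. My remedy is to work on the map $\bar A$ obtained from $A$ by deleting the $p$ boundary edges and identifying the $p$ boundary vertices into a single vertex $\bar c$; the identification is carried out inside the boundary face, so $\bar A$ stays planar and retains the original root-face.

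For necessity I would argue exactly as in Theorem~\ref{thm:ori}. If $A$ has a cycle $C$ of length $c<d$, then $C$ bounds two disks and the boundary face lies in one of them; let $R$ be the closed disk on the other side. Every face of $R$ then has degree $d$ and every vertex strictly inside $R$ is a non-boundary vertex, of weight $d$ in any pseudo \ddm-orientation. Writing $V',E'$ for the vertices and edges strictly inside $C$ on that side, Lemma~\ref{lem:counting} (with $f_0=C$, all other faces of degree $d$) gives $(d-2)|E'|=d|V'|+c-d<d|V'|$, and $R$ cannot degenerate to a single face since a face of degree $c\notin\{d,p\}$ is forbidden. But every edge incident to a vertex of $V'$ lies in $E'$, so the total edge weight $(d-2)|E'|$ is at least the total vertex weight $d|V'|$, a contradiction.

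For sufficiency, assume $A$ has girth $d$ and consider on $\bar A$ the $\alpha/\beta$-orientation problem with $\beta(e)=d-2$ on every edge, $\alpha(v)=d$ on every non-boundary vertex, and $\alpha(\bar c)=p-d\geq 0$. Condition~(i) of Lemma~\ref{lem:exists-alpha} is precisely the identity $(d-2)(\ee-p)=d(\vv-p)+p-d$ furnished by Lemma~\ref{lem:counting} (with $f_0$ the boundary face). Conditions~(ii) and~(iii) are verified on connected induced subgraphs $G_S$ as in Theorem~\ref{thm:ori}, splitting into two cases: when $\bar c\notin S$, the set $S$ lives in the interior and Lemma~\ref{lem:counting} gives the \emph{strict} inequality $(d-2)|E_S|<d|S|$ (all faces have degree $\geq d$); when $\bar c\in S$, the set $S$ corresponds to a vertex set of $A$ containing the whole boundary, and Lemma~\ref{lem:counting} applied with $f_0$ the boundary face gives $(d-2)|E_S|\leq \sum_{v\in S}\alpha(v)$. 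Thus $\bar A$ admits an $\alpha/\beta$-orientation accessible from $\bar c$, and by Lemma~\ref{lem:unique_minimal} a unique minimal one.

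It then remains to transfer the result to $A$: re-inserting the boundary cycle as a clockwise circuit of $1$-way edges of weight $d-2$ converts any $\alpha/\beta$-orientation of $\bar A$ into a pseudo \ddm-orientation of $A$, the weight $p-d$ collected at $\bar c$ redistributing as the excess indegrees of the boundary vertices (each of which also receives $d-2$ from its incoming boundary edge, so all weights are correct by Claim~\ref{claim:sum_in}). This correspondence is a bijection, and the point I expect to be the main obstacle is to show it respects minimality. The key observation is that a circuit of $\bar A$ through $\bar c$ corresponds in $A$ to an interior directed path between two boundary vertices, closed up by the unique directed (clockwise) boundary arc joining them, and that this identification preserves whether a circuit is clockwise or counterclockwise; the boundary cycle itself is clockwise and so is never a counterclockwise circuit. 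Granting this, minimal orientations correspond under the bijection, so $A$ has a unique minimal pseudo \ddm-orientation, and its accessibility from $\bar c$ becomes accessibility from every boundary vertex.
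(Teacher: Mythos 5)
Your proof is correct, and its existence argument takes a genuinely different route from the paper's, while the uniqueness-and-accessibility half coincides with the paper's own argument. The paper also contracts the boundary face into a single vertex $v_b$ (your $\bar c$), notes that pseudo \ddm-orientations of $A$ correspond bijectively to orientations of the contracted map in which $v_b$ has weight $p-d$ and every other vertex has weight $d$, and transfers minimality and accessibility through this bijection using Lemmas~\ref{lem:unique_minimal} and~\ref{lem:exists-alpha}; your geometric claim that contraction preserves the clockwise/counterclockwise character of circuits is exactly the paper's (equally terse) parenthetical justification, and you share with the paper the same small elision: a counterclockwise circuit of $A$ may use several disjoint boundary arcs, in which case it contracts to a non-simple closed walk through $\bar c$, and one must still argue that at least one of the circuits extracted at $\bar c$ is counterclockwise. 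The genuine divergence is existence: the paper fills the boundary face with an auxiliary $p$-gonal $d$-angulation of girth $d$ (attached by paths of suitable lengths so as to preserve girth $d$), invokes Theorem~\ref{thm:ori} on the resulting closed $d$-angulation, restricts the orientation back to $A$, and reorients the boundary into a clockwise circuit; you instead verify conditions (i)--(iii) of Lemma~\ref{lem:exists-alpha} directly on the contracted map, rerunning the Euler-relation case analysis of Theorem~\ref{thm:ori} via Lemma~\ref{lem:counting}. Your arithmetic checks out: condition (i) is Lemma~\ref{lem:counting} at the boundary face; for $\bar c\in S$ the induced subgraph expands to a subgraph of $A$ containing the whole boundary cycle, for which the lemma gives exactly $\sum_{v\in S}\alpha(v)\geq (d-2)|E_S|$; and for $\bar c\notin S$ the inequality is strict, which is precisely what condition (iii) requires. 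Your route buys self-containedness and uniformity --- no gluing construction to build and verify, only two cases instead of three thanks to the contraction, and it shows at once that \emph{every} pseudo \ddm-orientation (not just a constructed one) is accessible from all boundary vertices --- whereas the paper's route buys existence as a black-box corollary of Theorem~\ref{thm:ori} at the price of that ad hoc filling construction.
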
 
 
\begin{proof} 
The necessity of having girth $d$ is proved in the same way as in Theorem~\ref{thm:ori}. We now consider a  $p$-annular $d$-angulation $A$ of girth $d$ and prove that it admits a pseudo \ddm-orientation. 
 
First we explain how to $d$-angulate the interior of the boundary face of $A$ while keeping the girth equal to $d$. Let $A'$ be any $p$-gonal $d$-angulation of girth $d$. We insert $A'$ inside the boundary face of $A$ (in such a way that the two cycles of length $p$ face each other). Then, if $d$ is even, we join each boundary vertex of $A$  to a distinct boundary vertex of $A'$ by a path of length $d/2-1$, while if $d$ is odd we join each boundary vertex of $A$  to two consecutive boundary vertices of $A'$ by a path of length $\lfloor d/2\rfloor$ (so that every boundary vertex of $A'$ is joined to two consecutive boundary vertices of $A$). It is easily seen that the resulting map is a $d$-angulation of girth $d$. 
 
By the preceding method, we obtain from $A$ a $d$-angulation $D$ of girth $d$. 
Moreover, one of the faces of $D$ created inside the boundary face $f_b$ shares no vertex (nor edge) with $f_b$. 
Take such a face as the root-face of $D$. 
By Theorem~\ref{thm:ori}, the face-rooted $d$-angulation $D$ admits a  \ddm-orientation $O$. The orientation $O$ induces an orientation $O_A$ of $A$ in which every non-boundary vertex has weight $d$. Let $O_A'$ be the orientation of $A$ obtained from $O_A$ by reorienting the edges incident to the boundary face $f_b$ into a clockwise circuit of 1-way edges. By definition, $O_A'$ is a pseudo \ddm-orientation. In addition, by Theorem \ref{thm:ori}, $O$ is accessible from any vertex incident to the root-face of $D$. Thus, for any vertex $v$ of $A$, there is a directed path $P$ of $O$ from a  vertex  of the root-face of $D$ to $v$. Since $P$ has to pass by the contour of the boundary face to reach $v$, there is a directed path of $O_A'$ from some boundary vertex to $v$. Since the boundary face of $O_A'$ is a circuit, it means that  there is a directed path of $O_A'$ from any boundary vertex to $v$. Thus, we have proved the existence of a pseudo \ddm-orientation $O_A'$ which is accessible from any boundary vertex. 
 
We now prove the statement about the minimal orientation (note that it is not a direct consequence of Lemma~\ref{lem:unique_minimal}). Let $\hA$ be the map obtained from $A$ by contracting the boundary face of $A$ into a vertex, denoted $v_b$ (the $p$ edges of the boundary face are contracted). The mapping $\sigma$ which associates to a pseudo \ddm-orientation of $A$ the induced orientation of $\hA$ is a bijection between the (non-empty) set $S$ of pseudo \ddm-orientations of  $A$  and the set $\hS$ of orientations of $\hA$ such that every vertex other than $v_b$ has weight $d$ (while $v_b$ has weight $p-d$ by Claim~\ref{claim:sum_in}). Moreover, an orientation $O\in S$ is accessible from all boundary vertices if and only if $\sigma(O)$ 
is accessible from $v_b$; and $O$ is minimal if and only if $\sigma(O)$ is minimal (indeed, the fact that the boundary face of $O$ is a circuit implies that any counterclockwise circuit of $\sigma(O)$ is part of a counterclockwise circuit of $O$). Lemma~\ref{lem:unique_minimal} ensures that there is a unique minimal biorientation in $\hS$, hence $A$ admits a unique minimal pseudo \ddm-orientation, denoted $O_{\mathrm{min}}$. Moreover Lemma~\ref{lem:exists-alpha} ensures that, if an orientation in $\hS$ is accessible from $v_b$, then all orientations in $\hS$ are accessible from $v_b$ (indeed the accessibility condition only depends on the weight-functions $\alpha$ and $\beta$). Since $\sigma(O_A')$ is accessible from $v_b$, we conclude that $\sigma(O_{\mathrm{min}})$ is 
also accessible from $v_b$, hence $O_{\mathrm{min}}$ is accessible from all boundary vertices of~$A$. 
\end{proof}

For a $p$-annular $d$-angulation, a simple cycle $C$ is said to be \emph{separating} if the boundary face and the root-face are on different sides of $C$ and if $C$ is not equal to the root-face contour. 
A $p$-annular $d$-angulation (of girth $d$) is \emph{non-separated} if it has no separating cycle of length $d$.

\begin{prop} \label{prop:charact} 
The minimal pseudo \ddm-orientation of a $p$-annular $d$-angulation $A$ of girth $d$ is clockwise-minimal accessible (i.e., in $\mB$) if and only if $A$ is non-separated. 
\end{prop}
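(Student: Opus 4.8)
The plan is to build on Proposition~\ref{lem:exists_simple}, which already gives that the minimal pseudo \ddm-orientation $O_{\mathrm{min}}$ exists, is unique, is minimal (no counterclockwise circuit), and is accessible from every boundary vertex. Since membership in $\mB$ means ``clockwise-minimal and accessible'' and minimality is granted, it remains to characterize, in terms of separating cycles, two things: (a)~the clockwise condition on the outer edges (each outer edge $2$-way, or $1$-way with a non-root face on its right), and (b)~accessibility from an outer (root-face) vertex. Note first that the root-face has degree $d$ and, by girth $d$, its contour $C_0$ is a simple $d$-cycle; moreover every edge has weight $d-2>0$, so no edge is $0$-way.

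The heart of the argument is a cut-counting lemma for accessibility. Fix a root-face vertex $r$ and let $U_r$ be the set of vertices reachable from $r$. If $U_r\neq V$, then no edge joining $U_r$ to $\bar U_r:=V\setminus U_r$ can be $2$-way or $1$-way directed out of $U_r$ (its head would be reachable), so every such edge is $1$-way directed into $U_r$; hence the vertices of $\bar U_r$ receive ingoing weight only from internal edges, giving $\sum_{v\in\bar U_r}w(v)=(d-2)|E_{\bar U_r}|$. I would then split according to whether $\bar U_r$ contains the boundary vertices (being mutually reachable along the boundary circuit, they are all-in or all-out). If $\bar U_r$ avoids the boundary, all its vertices have weight $d$, so $(d-2)|E_{\bar U_r}|=d|\bar U_r|$; but the face-counting inequality (Lemma~\ref{lem:counting}, equivalently $2\ee\ge d\cdot\ff$ with Euler) applied to each component gives $(d-2)|E_{\bar U_r}|\le d|\bar U_r|-2d$, a contradiction (the identity forces $|E_{\bar U_r}|>|\bar U_r|$, so no component is a tree). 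Hence $\bar U_r$ contains all boundary vertices, and Claim~\ref{claim:sum_in} yields $(d-2)|E_{\bar U_r}|=d|\bar U_r|-p-d$, which is exactly the equality case of Lemma~\ref{lem:counting} for the induced sub-map $G_{\bar U_r}$ (boundary face of degree $p$, all others of degree $d$). Thus the outer face of $G_{\bar U_r}$ is bounded by a simple $d$-cycle $C$ separating the boundary face from the root-face, with $C\ne C_0$ since the vertices of $C$ lie in $\bar U_r\not\ni r$. This proves ``$U_r\ne V\Rightarrow$ separating cycle'', hence non-separated $\Rightarrow$ accessibility from every root-face vertex. The converse runs the same weight count on the region $R$ enclosed by a separating $d$-cycle: equality forces $\sum_{v\in C}\mathrm{out}_v=0$ (no edge outside $R$ is ingoing at a vertex of $C$), so the interior of $R$, which holds the boundary vertices, is unreachable from outside, killing accessibility.

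For the clockwise condition~(a), I expect accessibility to already force it. Suppose some outer edge $e$ is $1$-way with the root-face $f_0$ on its right, say directed $u\to v$. By the accessibility just established there is a directed path $P$ from $v$ to $u$; then $P$ followed by $e$ contains a circuit $C^*$ through $e$. Since $f_0$ is the unbounded face it lies outside $C^*$, and at $e$ the outside of $C^*$ is on the right, so $f_0$ is on the right of $C^*$ throughout, i.e.\ $C^*$ is counterclockwise --- contradicting the minimality of $O_{\mathrm{min}}$. Hence every $1$-way outer edge is clockwise; together with accessibility (and the fact that one may then walk clockwise around $C_0$ to reach any outer vertex from any other) this gives $O_{\mathrm{min}}\in\mB$. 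Assembling the pieces: non-separated $\Rightarrow$ accessible $+$ clockwise $\Rightarrow O_{\mathrm{min}}\in\mB$, while separated $\Rightarrow$ not accessible $\Rightarrow O_{\mathrm{min}}\notin\mB$.

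The step I expect to be most delicate is the equality analysis in the cut-counting argument, where one must correctly account for vertices of the cut-cycle shared with the boundary face or with the root-face contour: the clean identities above tacitly assume that the cycle $C$ produced is internally disjoint from these, and the general case requires either choosing an extremal (e.g.\ outermost) separating cycle or carefully tracking the weights carried by shared vertices so that both the equality $(d-2)|E_{\bar U_r}|=d|\bar U_r|-p-d$ and the ``sealing'' identity $\sum_{v\in C}\mathrm{out}_v=0$ persist. A secondary subtlety, in the separated $\Rightarrow$ not accessible direction, is guaranteeing that at least one outer vertex lies strictly outside $R$ so that accessibility genuinely fails; this should follow once $C\ne C_0$ is combined with the extremal choice of $C$.
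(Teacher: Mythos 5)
Your argument is correct and is essentially the paper's own proof: both directions rest on Proposition~\ref{lem:exists_simple}, Claim~\ref{claim:sum_in}, and the equality case of Lemma~\ref{lem:counting} applied across a reachability cut to manufacture a separating $d$-cycle when accessibility fails, together with the extraction of a counterclockwise circuit through any outer edge violating the clockwise condition. The only deviation is your choice of cut set (the complement of the vertices reachable from $r$, versus the paper's set of vertices that can reach the boundary, which contains the boundary by definition and is trivially connected): this is what creates your Case~A --- which is dispatched in one line, since if $r$ reached a boundary vertex then Proposition~\ref{lem:exists_simple} would make every vertex reachable --- and obliges you to verify that $G_{\bar U_r}$ is connected before invoking Lemma~\ref{lem:counting} (true, because directed paths from boundary vertices to vertices of $\bar U_r$ stay inside $\bar U_r$), two small points your write-up glosses over but which do not affect the soundness of the approach.
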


\begin{proof} 
Let $A$ be a  $p$-annular $d$-angulation of girth $d$.
Suppose first that $A$ has a separating cycle $C$ of length $d$. Seeing the root-face as the outer face, the boundary face is inside $C$; and 
since $C$ is not the root-face contour and $A$ has girth $d$, it is easy to check that there is an outer vertex $u_0$ strictly outside $C$. 
We now prove that the minimal  pseudo \ddm-orientation $O$ is not accessible from $u_o$. Let $D$ be the $p$-gonal $d$-angulation made of $C$ and of all the edges and vertices inside $C$. By Lemma~\ref{lem:counting}, the numbers $\ee$ of edges and $\vv$ of vertices in $D$ satisfy $(d-2)\ee=d(\vv-p)+dp-p-d$. Moreover the right-hand-side is the sum of the weights of the vertices in $D$ (by Claim~\ref{claim:sum_in}). Thus all edges incident to $C$ in the region exterior to $C$ are 1-way edges oriented away from $C$. Thus $u_o$ cannot reach the vertices of $D$ by a directed path. The orientation $O$ is not accessible from the outer vertex $u_o$, hence $O$ is not clockwise-minimal accessible. 
 
Suppose conversely that $A$ is non-separated. We first prove that the minimal  pseudo \ddm-orientation $O$ is accessible from any outer vertex. We reason by contradiction and suppose that $O$ is not accessible from an outer vertex $u_o$. In this case, $u_o$ cannot reach the boundary vertices by a directed path (since Proposition~\ref{lem:exists_simple} ensures that the boundary vertices can reach all the other vertices). We consider the set $U$ of vertices of $A$ that can reach the boundary vertices by a directed path. This set contains all the boundary vertices but not the outer vertex $u_o$. 
Let $A_U=(U,E_U)$ be the map induced by the vertices in $U$ (it is clear from the definitions that $A_U$ is connected).  Every edge between a vertex $u$ in $U$ and a vertex not in $U$ is oriented away from $U$. Thus $(d-2)|E_U|$ equals the sum of the weights of the vertices in $U$. By Claim~\ref{claim:sum_in} we get $(d-2)|E_U|=d(|U|-p)+pd-p-d$ hence 
\begin{equation}\label{eq:Eu} 
(d-2)(|E_U|-p)=d(|U|-p)+p-d. 
\end{equation} 
Now, since $A$ has girth $d$, the non-boundary faces of $A_U$ have degree at least $d$. Therefore by Lemma~\ref{lem:counting}, the equality \eqref{eq:Eu} ensures that every face of $A_U$ has degree $d$. In particular, the cycle $C$ corresponding to the contour of the face of $A_U$ in which the outer vertex $u_o$ lies has length $d$. This cycle $C$ is separating, which is a contradiction. 
 
It remains to prove that the outer face contour of $O$ is a clockwise circuit. Suppose the contrary, i.e., there is an outer edge $e$ which is 1-way and has the root-face on its right. Let $v,v'$ be the end and origin of $e$. 
By accessibility from $v$ there is a simple directed path $P$ from $v$ to a boundary vertex $u$. 
By accessibility from boundary vertices, there is a simple directed path $P'$ from $u$ to $v'$. 
The paths $P$ and $P'$ do not contain the 1-way edge $e$. Thus, a circuit $C$ containing $e$ can be extracted from  the concatenation of $P$, $P'$ and $e$. Since $e$ has the root-face on its right, the circuit $C$ is counterclockwise, contradicting the minimality of $O$. 
\end{proof}

\medskip

\subsection{Specializing the master bijections $\Phi_{\pm}$}~\\ 
We call $(p,d)$\emph{-branching mobile} an $\NN$-mobile such that: 
\begin{itemize} 
\item every edge has weight $d-2$, 
\item every black vertex has degree $d$ except for one, called the \emph{special vertex} $s$, which has degree $p$ and is incident to no bud, 
\item every white vertex which is not a neighbor of $s$ has weight $d$, and the weights of the neighbors of $s$ add up to $pd-p-d$. 
\end{itemize} 
An example of $(6,5)$-branching mobile is represented in Figure~\ref{fig:bijection_dk} (bottom-right).\\ 
 
By Proposition~\ref{prop:charact}, the class of non-separated $p$-annular $d$-angulations of girth $d$ can be identified with the class $\mE_{p,d}$ of $\NN$-biorientations in $\mB$  such that 
\begin{itemize} 
\item every edge has weight $d-2$, 
\item every face has degree $d$ except for one non-root face of degree $p$, called boundary face, whose contour is a clockwise circuit of 1-way edges 
\item every non-boundary vertex has weight $d$ (and the sum of weights of the boundary vertices is $pd-p-d$ by Claim~\ref{claim:sum_in}). 
\end{itemize} 
By Theorem~\ref{thm:kmaster-bijections} (and the definition of $\Phi_+$, which implies that an inner face of degree $p$ whose contour is a clockwise circuit of 1-way edges corresponds to a black vertex of the mobile incident to $p$ edges and no bud), the  class $\mE_{p,d}$ of orientations is in bijection with the class of $(p,d)$-branching mobiles of excess $d$. As in the case of $d$-angulations without boundary, the additional requirement on the excess is redundant as claimed below. 
 
\begin{claim} 
Any $(p,d)$-branching mobile has excess $d$. 
\end{claim}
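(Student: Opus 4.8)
The plan is to mimic the computation in the proof of Claim~\ref{claim:dreg}, tracking how the presence of the special vertex $s$ perturbs the bookkeeping. First I would record the relevant quantities: let $\ee$, $\ee'$, $\oo$ be respectively the number of edges, white-white edges, and buds, and let $\bb$, $\ww$ be the number of black and white vertices. As in the $d$-branching case, the first observation is that there are no black-black edges: since each edge has weight $d-2\geq 1$ while both half-edges of a black-black edge would have weight $0$ (black half-edges carry weight $0$ in an $\NN$-mobile), such an edge cannot occur. Hence the number of black-white edges is $\ee-\ee'$, and by definition the excess is $\delta=\ee+\ee'-\oo$.

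The three structural constraints then yield three linear relations. The degree condition on black vertices --- now $\bb-1$ vertices of degree $d$ together with the special vertex $s$ of degree $p$ and no bud --- counts the half-edges (buds included) incident to black vertices as $\ee-\ee'+\oo=d(\bb-1)+p$. The weight condition on white vertices requires more care, because the neighbors of $s$ are anomalous: since $s$ is black with $p$ edges and no bud, it has exactly $p$ distinct white neighbors, whose weights sum to $pd-p-d$, while each of the remaining $\ww-p$ white vertices has weight $d$. As white vertices carry no buds, the total edge weight $(d-2)\ee$ equals the sum of all white-vertex weights, giving $(d-2)\ee=(pd-p-d)+d(\ww-p)=d\ww-p-d$. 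Finally, the tree relation reads $\bb+\ww=\ee+1$.

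Eliminating $\ww$ and $\oo$ is then routine: the weight relation and the tree relation combine to give $d\bb=2\ee-p$, after which substituting into $\delta=2\ee-d\bb+d-p$ (obtained from the black-degree relation) collapses everything to $\delta=d$. The only point requiring genuine attention --- and the place where this argument diverges from Claim~\ref{claim:dreg} --- is the accounting for $s$: its degree defect ($p$ in place of $d$) and the compensating weight defect of its neighbors ($pd-p-d$ in place of the $pd$ that $p$ generic white vertices would contribute) must be tracked simultaneously, and it is exactly the interplay of these two defects that flips the answer from $-d$ in the boundaryless case to $+d$ here. I expect no real obstacle beyond keeping this bookkeeping straight.
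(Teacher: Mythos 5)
Your proof is correct and follows essentially the same route as the paper's: the same three linear relations (black-vertex degrees, white-vertex weights, the tree relation $\bb+\ww=\ee+1$) and the same elimination yielding $d\bb=2\ee-p$ and hence $\delta=d$. The only difference is that you make explicit two facts the paper leaves implicit --- that edge weight $d-2\geq 1$ rules out black-black edges, and that $s$ has exactly $p$ distinct white neighbors --- which is harmless and slightly more careful.
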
 
\begin{proof} 
By definition, the excess is $\delta=\ee+\ee'-\oo$, where  $\ee,\ee',\oo$ are respectively the number of edges, white-white edges and buds. Let $\bb$ and $\ww$ be the number of black and white vertices. 
The condition on black vertices gives $\ee-\ee'+\oo=d(\bb-1)+p$, so that  $\delta=2\ee-d\bb+d-p$. 
The condition on white vertices gives $(d-2)\ee=d(\ww-p)+pd-p-d$ and combining this relation with $\bb+\ww=\ee+1$ (so as to eliminate $\ww$) gives $\bb=\frac{2}{d}\ee-\frac{p}{d}$, hence $\delta=d$. 
\end{proof}

To summarize, by specializing the master bijection $\Phi_+$, Proposition~\ref{prop:charact} gives the following result. 
 
\begin{theo}\label{thm:bijdk} 
For $n\geq 1$, non-separated $p$-annular $d$-angulations of girth $d$  with $n$ inner faces are in bijection with $(p,d)$-branching mobiles with $n$ black vertices. 
\end{theo}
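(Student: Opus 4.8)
The plan is to assemble Theorem~\ref{thm:bijdk} directly from the pieces already established, since almost all the work has been front-loaded into the preceding propositions and claims. The target is a bijection between non-separated $p$-annular $d$-angulations of girth $d$ with $n$ inner faces and $(p,d)$-branching mobiles with $n$ black vertices, so the strategy is a three-step chaining: first identify the combinatorial class with a class of $\NN$-biorientations, then apply the master bijection $\Phi_+$, then recognize that the image is exactly the class of $(p,d)$-branching mobiles (with the excess condition shown redundant).

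First I would invoke Proposition~\ref{prop:charact} together with Proposition~\ref{lem:exists_simple} to identify each non-separated $p$-annular $d$-angulation $A$ of girth $d$ with its unique minimal pseudo \ddm-orientation, which by Proposition~\ref{prop:charact} lies in $\mB$. This gives a bijection between the class of non-separated $p$-annular $d$-angulations of girth $d$ and the class $\mE_{p,d}$ of $\NN$-biorientations in $\mB$ specified by the three displayed conditions (every edge of weight $d-2$; every face of degree $d$ except one non-root boundary face of degree $p$ whose contour is a clockwise circuit of 1-way edges; every non-boundary vertex of weight $d$). The equality of the weight-sum $pd-p-d$ on the boundary vertices is guaranteed by Claim~\ref{claim:sum_in}, so no extra condition need be imposed there.

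Next I would apply Theorem~\ref{thm:kmaster-bijections}, which states that $\Phi_+$ is a bijection between $\NN$-biorientations in $\mB$ and $\NN$-mobiles of positive excess, matching outer degree to excess, inner-face degrees to black-vertex degrees, vertex weights to white-vertex weights, and edge weights to edge weights. Tracing the conditions defining $\mE_{p,d}$ through this correspondence shows that an orientation in $\mE_{p,d}$ maps precisely to an $\NN$-mobile with every edge of weight $d-2$, every black vertex of degree $d$ except the one corresponding to the boundary face which has degree $p$ and (as noted in the parenthetical, because a degree-$p$ face whose contour is a clockwise circuit of 1-way edges carries no outgoing half-edges on its right) is incident to no bud, and the white-vertex weight conditions transcribed from the vertex-weight conditions. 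This is exactly the definition of a $(p,d)$-branching mobile, with the number of inner faces $n$ equal to the number of black vertices (excluding no vertex, since the boundary-face black vertex is still counted).

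The only loose end is the excess constraint: Theorem~\ref{thm:kmaster-bijections} lands on $\NN$-mobiles of \emph{positive} excess, and $\mE_{p,d}$ corresponds a priori to those of excess $d$, whereas the target class $(p,d)$-branching mobile imposes no excess condition. This is dispatched by the preceding Claim, which shows every $(p,d)$-branching mobile automatically has excess $d$ by a short counting argument combining the black-vertex degree condition, the white-vertex weight condition, and the tree relation $\bb+\ww=\ee+1$. Hence the excess constraint is redundant and may be dropped, completing the identification. The main obstacle is essentially bookkeeping rather than conceptual: one must check carefully that the parameter correspondences of Theorem~\ref{thm:kmaster-bijections} send each defining condition of $\mE_{p,d}$ to the intended condition on the mobile, in particular verifying that the boundary face (a non-root face whose contour is a clockwise circuit of 1-way edges) really does produce a bud-free black vertex of degree $p$, and that $n$ is preserved on both sides.
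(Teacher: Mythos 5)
Your proposal is correct and takes essentially the same route as the paper: identify the class with $\mE_{p,d}$ via Propositions~\ref{lem:exists_simple} and~\ref{prop:charact}, push it through the master bijection $\Phi_+$ of Theorem~\ref{thm:kmaster-bijections} using the parameter correspondences, and discharge the excess condition via the claim that every $(p,d)$-branching mobile has excess $d$. One directional slip in your parenthetical: the boundary face gives a bud-free black vertex because all $p$ of its 1-way contour edges have that face on their \emph{right} (so its clockwise degree equals its degree $p$, whence the black vertex has indegree $p$ and outdegree $0$), whereas ``no outgoing half-edges on its right'' would mean clockwise degree $0$, i.e., a black vertex with $p$ buds and no edges --- the opposite of what is needed.
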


Theorem~\ref{thm:bijdk} is illustrated in Figure~\ref{fig:bijection_dk} (bottom).  As in the case without boundary, a slight simplification appears in the definition of $(p,d)$-branching mobiles when $d$ is even. First observe that when $d$ is even the  $p$-gonal $d$-angulations are bipartite (since the non-boundary faces generate all cycles). Thus, there is no $p$-gonal $d$-angulation when $p$ is odd and $d$ even. Similarly there is no $(p,d)$-branching mobile with $p$ odd and $d$ even. There is a further simplification (yielding a remark 
similar to Remark~\ref{rk:2bangul}):  
\begin{prop}\label{prop:pbip} 
If $d=2b$ is even, then for any $q\geq b$, the weights of half-edges of $(2q,2b)$-branching mobiles are even. 
Similarly,  the weights of the half-edges in minimal pseudo \ddm-orientations are even. 
\end{prop}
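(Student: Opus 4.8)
The plan is to adapt the proof of Proposition~\ref{prop:bip}. In both statements every edge has the even weight $d-2=2b-2$, so the two half-edges of each edge have the same parity; call an edge \emph{odd} if both its half-edges are odd. Exactly as in Proposition~\ref{prop:bip}, it then suffices to prove that there are no odd edges. The only new difficulty, compared with the boundaryless case, is that the vertices whose weight is \emph{not} prescribed to equal the even number $d$ --- namely the boundary vertices of a pseudo \ddm-orientation, resp. the neighbors of the special vertex of a $(2q,2b)$-branching mobile --- could a priori have odd weight, since only the \emph{sum} of their weights is controlled (it equals $pd-p-d=4qb-2q-2b$, which is even). The two parts are in fact equivalent through $\Phi_+$, because by Remark~\ref{rk:parameter-correspondence} the nonzero half-edge weights of a biorientation and of its image mobile coincide; nevertheless I will treat each part directly.

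For the orientation statement I would reduce to the situation of Proposition~\ref{prop:bip} by contracting the boundary. Let $O$ be the minimal pseudo \ddm-orientation of a $2q$-annular $2b$-angulation $A$ of girth $2b$, and let $\hA$ and $\sigma$ be as in the proof of Proposition~\ref{lem:exists_simple}, so that $\hat O:=\sigma(O)$ is the minimal orientation of $\hA$ in which every vertex other than $v_b$ has weight $d=2b$ while $v_b$ has weight $p-d=2q-2b$. The crucial point is that, because both $p$ and $d$ are even, \emph{all} vertices of $\hA$ now have even weight, and every edge of $\hA$ (an inner edge of $A$) still has even weight $2b-2$. One then runs the argument of Proposition~\ref{prop:bip} on $\hat O$: an odd edge is necessarily $2$-way (a $0$- or $1$-way edge carries a half-edge of weight $0$, forcing both its half-edges to be even), the $2$-way edges of a minimal biorientation form a forest (a cycle of $2$-way edges --- including a loop or a pair of parallel $2$-way edges possibly created by the contraction --- would be a counterclockwise circuit, contradicting minimality), and hence, if an odd edge existed, some vertex would be incident to exactly one odd edge and would therefore have odd weight --- impossible. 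Thus $\hat O$ has no odd edge, so all its half-edge weights are even; the inner half-edges of $O$ inherit these weights, and the boundary edges of $O$ are $1$-way of weights $(2b-2,0)$, so every half-edge of $O$ is even.

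For the mobile statement I would argue directly on the tree, exploiting that the special vertex $s$ is a cut vertex. As above, every odd edge is a white--white edge (a black--white edge carries the weight-$0$ half-edge of its black endpoint, hence is even), and the odd edges form a forest $F$. For a white vertex $v$ the weight is congruent mod $2$ to the number of incident odd edges, so any white vertex with an odd number of incident odd edges has odd weight and is therefore a neighbor of $s$ (all other white vertices have weight $d=2b$). Now $s$ has degree $p=2q$ and no bud, so its $p$ incident edges are black--white and reach $p$ distinct neighbors $n_1,\dots,n_p$; deleting $s$ splits the tree into $p$ branches, the branch $T_i$ containing the single neighbor $n_i$. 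Since no edge of $F$ is incident to $s$, each connected component of $F$ lies inside one branch $T_i$ and hence meets at most one neighbor of $s$. But a nonempty component is a tree with at least two leaves, each incident to exactly one odd edge and thus a neighbor of $s$ --- a contradiction. Therefore $F$ is empty and all half-edge weights of the mobile are even.

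I expect the only genuinely delicate point to be the reduction step for the orientation part: one must invoke the contraction $\sigma$ of Proposition~\ref{lem:exists_simple} precisely so that the unprescribed boundary weights collapse into the single \emph{even} weight $p-d$ of $v_b$, and one must check that the forest-of-$2$-way-edges argument survives the loops or multiple edges that this contraction can introduce. Once all vertices carry even weight, the remainder is verbatim Proposition~\ref{prop:bip}. I note finally that the orientation statement for non-separated $A$ already yields the mobile statement through Theorem~\ref{thm:bijdk} (which identifies $(2q,2b)$-branching mobiles with non-separated $2q$-annular $2b$-angulations, half-edge weights corresponding), so the direct tree argument above is optional; I keep it because it is self-contained.
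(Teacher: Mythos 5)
Your proof is correct, and it is worth comparing to the paper's, which consists of the single sentence ``The proof is the same as the proof of Proposition~\ref{prop:bip}.'' You follow that same core argument --- half-edges of an edge have equal parity, odd edges must be $2$-way (resp.\ white--white), odd edges form a forest (for orientations because a cycle of $2$-way edges would give a counterclockwise circuit), and a leaf of that forest has odd weight --- but you correctly identify the point where the verbatim argument of Proposition~\ref{prop:bip} does not suffice in the boundary setting: the contradiction there requires \emph{every} vertex to have prescribed even weight, whereas here the boundary vertices (resp.\ the neighbors of the special vertex $s$) have only their weight \emph{sum} prescribed, so a leaf of the odd forest could a priori sit at such a vertex. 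Your two supplements fill exactly this gap. For orientations, you contract the boundary via the map $\sigma$ from the proof of Proposition~\ref{lem:exists_simple}, so that the unconstrained boundary weights collapse into the single even weight $p-d$ at $v_b$, checking that minimality is preserved and that loops or parallel $2$-way edges created by the contraction still yield counterclockwise circuits. For mobiles, you use that $s$ is a cut vertex all of whose incident edges are even, so a nonempty component of the odd forest would need two distinct leaves among the neighbors of $s$ while meeting at most one branch of $T\setminus\{s\}$. Your closing observation that the orientation statement (for non-separated annular maps) implies the mobile statement via Theorem~\ref{thm:bijdk} is also in the correct logical direction. In short: same parity-forest argument the paper invokes, plus the boundary-handling step that the paper's one-line proof leaves implicit --- a step that is genuinely needed for the proof to be complete.
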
 
 
\begin{proof} The proof is the same as the proof of Proposition~\ref{prop:bip}.\end{proof}

We now explain how to deal bijectively with general (possibly separated) $p$-annular $d$-angulations of girth $d$. Let $A$ be a $p$-annular $d$-angulation. A simple cycle $C$ of $A$ is called \emph{pseudo-separating} if it is either separating or is the contour of the root-face. A pseudo-separating cycle $C$ of $A$ of length $d$ defines: 
\begin{itemize} 
\item a $p$-annular $d$-angulation, denoted $A_C$, corresponding to the map on the side of $C$ containing the boundary face ($C$ is the contour of the root-face), 
\item a $d$-annular $d$-angulation, denoted $A_C'$, corresponding to the map on the side of $C$ containing the root-face  ($C$ is the contour of the boundary face). 
\end{itemize} 
In order to make the decomposition $(A,C)\mapsto (A_C,A_C')$ injective we consider \emph{marked maps}. We call a $p$-annular $d$-angulation \emph{marked} if a boundary vertex is marked.  
We now consider an arbitrary convention that, for each marked $p$-annular $d$-angulation, distinguishes one of the outer vertices as \emph{co-marked} (the co-marked vertex is entirely determined by the marked vertex). For a marked $p$-annular $d$-angulation $A^\bu$ and a pseudo-separating cycle $C$ of length $d$, we define the marked annular $d$-angulations $A_C^\bu$ and ${A_{C}^\bu}'$ obtained by marking $A_C$ at the marked vertex of $A^\bu$ and marking ${A_C^\bu}'$ at the co-marked vertex of $A_C^\bu$.

\begin{prop}\label{prop:decomposition-pannular} 
Any $p$-annular $d$-angulation $A$ of girth $d$ has a unique pseudo-separating cycle $C$ of length $d$ such that $A_C$ is non-separated. Moreover, the mapping $\Delta$ which associates to a marked  $p$-annular $d$-angulation  $A^\bu$ of girth $d$ the pair $(A_C^\bu,{A^\bu_C}')$ is a bijection between marked $p$-annular $d$-angulations of girth $d$ and pairs made of a marked non-separated $p$-annular $d$-angulation of girth $d$ and a marked $d$-annular $d$-angulation of girth $d$. 
\end{prop}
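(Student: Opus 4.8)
The plan is to treat the two assertions separately: first the existence and uniqueness of the canonical cycle $C$, then the bijectivity of $\Delta$, which I would establish by exhibiting the inverse gluing map. Throughout I view the root-face as the outer face, so that $f_b$ sits somewhere inside, and I note that the root-face contour is itself a simple $d$-cycle (it is a face of degree $d$ in a girth-$d$ map, hence cannot repeat a vertex without creating a cycle shorter than $d$).

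For the first assertion I would consider the finite nonempty set $\mS$ of pseudo-separating cycles of length $d$, and to each $C\in\mS$ associate the closed region $R_C$ lying on the side of $C$ that contains $f_b$. I would single out $C$ as the element of $\mS$ minimizing the number of faces inside $R_C$. The heart of the argument is an \emph{uncrossing} lemma: if $C_1,C_2\in\mS$ with neither $R_{C_1}\subseteq R_{C_2}$ nor the reverse, then $C_1$ and $C_2$ cross, and the boundary $\gamma_\cap$ of the component of $R_{C_1}\cap R_{C_2}$ containing $f_b$, together with the boundary $\gamma_\cup$ of $R_{C_1}\cup R_{C_2}$, satisfies $|\gamma_\cap|+|\gamma_\cup|\le |C_1|+|C_2|=2d$ (each edge of $C_1\cup C_2$ is used at most once on the two boundaries). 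Since both $\gamma_\cap$ and $\gamma_\cup$ enclose $f_b$ while avoiding the root-face, the girth forces $|\gamma_\cap|=|\gamma_\cup|=d$; extracting a simple cycle from $\gamma_\cap$ then gives an element of $\mS$ whose region is strictly smaller than $R_{C_1}$ and $R_{C_2}$. This makes the minimizer unique (two minimizers are either nested, hence equal, or crossing, which is impossible). Finally $A_C$ is non-separated precisely when $R_C$ is minimal, because a separating $d$-cycle of $A_C$ is exactly a pseudo-separating $d$-cycle of $A$ lying strictly inside $C$.

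For the bijection I would first check $\Delta$ is well-defined: both $A_C$ and $A_C'$ are girth-$d$ $d$-angulations (each cycle is a cycle of $A$, and the faces supply cycles of length $d$), $A_C$ is non-separated by the first part, $A_C'$ is a $d$-annular $d$-angulation with boundary face $C$, and the two markings are legitimate boundary-vertex markings. I would then build the inverse: given a marked non-separated $p$-annular $d$-angulation $B^\bu$ and a marked $d$-annular $d$-angulation $E^\bu$, glue the root-face contour of $B$ to the boundary face of $E$ along the $d$-cycle $C$, aligning the co-marked vertex of $B^\bu$ with the marked vertex of $E^\bu$; the co-marking convention pins down the unique rotational alignment, resolving the gluing symmetry (analogous to the subtlety noted after Claim~\ref{claim:exposed-corners}). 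To see that the glued map $A$ has girth $d$ — the crucial point — I would argue by contradiction: among cycles of $A$ of length $<d$, take one minimizing the number of edges lying strictly on the $E$-side. Such a cycle must cross $C$, hence has an $E$-side sub-path $P$ between two vertices $x,y$ of $C$ that is internally disjoint from $C$; since $P$ together with the shorter $C$-arc $\alpha$ between $x$ and $y$ is a simple cycle of $E$, girth gives $|P|\ge d-|\alpha|\ge|\alpha|$, so replacing $P$ by $\alpha$ does not increase the length while strictly decreasing the number of $E$-side edges. Iterating pushes the cycle entirely onto the $B$-side, contradicting the girth of $B$.

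Finally I would verify that both compositions are the identity. For $\Delta^{-1}\circ\Delta$ one reglues $A_C$ and $A_C'$ along $C$ and recovers $A$ with its marking. For $\Delta\circ\Delta^{-1}$ the only thing to check is that the glued cycle $C$ is the canonical cycle of the reconstructed $A$: since $B=A_C$ is non-separated it has no separating $d$-cycle, so any pseudo-separating $d$-cycle of $A$ strictly inside $C$ would be a separating $d$-cycle of $B$, which is impossible; combined with the uncrossing lemma (whose innermost region is contained in that of every element of $\mS$, in particular $C$), this forces the canonical cycle to equal $C$, and the markings are recovered by the co-marking convention. I expect the main obstacle to be the girth-preservation under gluing: one must rule out short cycles weaving back and forth across $C$, which is exactly what the shortcutting-by-$C$-arcs induction above is designed to handle.
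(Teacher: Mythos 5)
Your proof is correct and follows essentially the same route as the paper: an uncrossing argument (based on girth $d$) showing that the pseudo-separating $d$-cycles admit a unique innermost element, which is minimal exactly when $A_C$ is non-separated, followed by the cut-and-glue bijection with the co-marking convention resolving rotational symmetry. The only difference is one of detail: you spell out the edge-counting behind the uncrossing step ($|\gamma_\cap|+|\gamma_\cup|\le 2d$) and the shortcutting argument proving girth preservation under gluing, both of which the paper merely asserts.
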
 
 
Proposition~\ref{prop:decomposition-pannular} gives a bijective approach for general $p$-annular $d$-angulations of girth $d$: apply the master bijection $\Phi_+$ (Theorem~\ref{thm:bijdk}) to the non-separated $p$-annular $d$-angulation $A_C$, and the master bijection $\Phi_-$ (Theorem~\ref{thm:bij_dangul}) to the $d$-angulation~$A_C'$. 
 
\begin{proof} 
The set  $\mC$ of pseudo-separating cycles of $A$ of length $d$ is non-empty (it contains the cycle corresponding to the contour of the root-face) and is partially ordered by the relation $\preceq $ defined by setting $C\preceq C'$ if $A_C\subseteq A_{C'}$ (here the inclusion is in terms of the vertex set and the edge set). Moreover, $A_C$ is non-separated if and only if $C$ is a minimal element of $(\mC,\preceq)$. Observe also that for all $C,C'\in \mC$, the intersection $A_C \cap A_{C'}$ is non-empty because it contains the boundary vertices and edges.  Thus, if $C,C'$ are not comparable for  $\preceq$, then the cycles $C,C'$ must intersect. Since they have both length $d$ and $A$ has no cycle of length less than $d$, the only possibility is that  $A_C \cap A_{C'}$  is delimited by a separating cycle $C''$ of length $d$: $A_C \cap A_{C'}=A_{C''}$. This shows that  $(\mC,\preceq)$ has a unique minimal element, hence $A$ has a unique pseudo-separating cycle $C$ such that $A_C$ is non-separated. 
 
We now prove the second statement. The injectivity of $\Delta$ is clear: glue back the maps $A_C^\bu,{A^\bu_C}'$ by identifying the root-face $C$ of $A_C^\bu$ with the boundary face of ${A_C^\bu}'$, and identifying the co-marked vertex of $A^\bu_C$ with the marked vertex of ${A^\bu_C}'$. To prove surjectivity we only need to observe that gluing a marked non-separated $p$-annular $d$-angulation of girth $d$ and a marked $d$-annular $d$-angulation of girth $d$ as described above preserves the girth $d$ (hence the glued map $A^\bu$ is a  $p$-annular $d$-angulation) and makes $C$ the minimal (for $\preceq$) pseudo-separating cycle of length $d$ of $A^\bu$. 
\end{proof}

\medskip 
 
\section{Counting results} \label{sec:counting} 
In this section, we establish equations for the generating functions of $d$-angulations of girth $d$ without and with boundary (Subsections~\ref{subsec:counting-without} and~\ref{subsec:counting-with}). We then obtain closed formulas in the cases $d=3$ and $d=4$. 
We call \emph{generating function}, or \emph{GF} for short, of a class $C$ \emph{counted according to a parameter}  $P$ the formal power series $C(x)=\sum_{n\in\NN}c_n x^n$, where $c_n$ is the number of objects $c\in C$ satisfying $P(c)=n$ (we say that $x$ \emph{marks} the parameter $P$).  We also denote by $[x^n]G(x)$ the coefficient of $x^n$ in a formal power series $G(x)$.

\subsection{Counting rooted $d$-angulations of girth $d$}\label{subsec:counting-without}~\\ 
Let $d\geq 3$.  
By Theorem~\ref{thm:bij_dangul}, counting $d$-angulations of girth $d$ reduces to counting $d$-branching mobiles. We will characterize the generating function of $d$-branching mobiles by a system of equations (obtained by a straightforward recursive decomposition). We first need a few notations. For a positive integer $j$ we define the polynomial $h_j$ in the variables $w_1,w_2,\ldots$ by:
\begin{equation}\label{eq:def_hj} 
h_j(w_1,w_2,\ldots):=[t^j]\frac{1}{1-\sum_{i>0}t^i w_i}=\sum_{r>0}\sum_{\substack{i_1,\ldots,i_r>0\\ i_1+\ldots+i_r=j}}w_{i_1}\cdots w_{i_r}. 
\end{equation} 
In other words, $h_j$ is the (polynomial) generating function of integer compositions of $j$ 
where the variable $w_i$ marks the number of parts of size $i$.

We call \emph{planted $d$-branching mobile} an $\NN$-mobile with a marked leaf (vertex of degree 1) such that 
edges have weight $d-2$, non-marked black vertices have degree $d$ and non-marked white vertices have weight $d$. For $i\in\{0,\ldots,d-2\}$, we denote by $\cW_i$ the family of planted $d$-branching 
mobiles where the marked leaf has weight $i$ (the case $i=0$ correspond to a black marked leaf). We also denote by $W_i\equiv W_i(x)$ the generating function of $\cW_i$ counted according to the number of non-marked black vertices. 
 
For $i=d-2$, the marked leaf of a mobile in $\cW_i$ is a white vertex connected to a black vertex $b$. The $d-1$ other half-edges incident to $b$ are either buds or belong to an edge $e$ leading to white vertex $w$. In the second case the sub-mobile planted at $b$ and containing $w$ belongs to $\cW_0$. This decomposition gives the equation $W_{d-2}=x(1+W_0)^{d-1}$. 
 
For $i\in\{0,\ldots,d-3\}$, the marked leaf of a mobile in $\cW_i$ is connected to a white vertex $w$. Let $v_1,\ldots,v_r$ be the other neighbors of $w$. For $j=1\ldots, r$ the sub-mobile planted at $w$ and containing $v_j$ belongs to one of the classes $\cW_{\al(j)}$ (with $\al(j)>0$), and the sum of the indices $\sum_{j=1}^r \al(j)$ is equal to $d-(d-2-i)=i+2$. This decomposition gives 
$$ 
 W_i=\sum_{r>0}\sum_{\substack{i_1,\ldots,i_r>0\\ i_1+\ldots+i_r=i+2}}W_{i_1}\cdots W_{i_r}=h_{i+2}(W_1,\ldots,W_{d-2}). 
 $$ 
Hence, the series $W_0,W_1,\ldots,W_{d-2}$ satisfy the following system of equations: 
\begin{equation}\label{eq:syst1} 
\left\{ 
\begin{array}{rcl}\ds 
W_{d-2}&=&x(1+W_0)^{d-1}, \\ 
W_i&=&h_{i+2}(W_1,\ldots,W_{d-2})\ \ \ \forall i\in\{0,\ldots, d-3\},\\ 
W_i&=&0  \ \ \ \forall i>d-2. 
\end{array} 
\right. 
\end{equation} 
Observe that the system \eqref{eq:syst1} determines  $W_0,W_1,\ldots,W_{d-2}$ uniquely as formal power series. Indeed, it is clear that any solutions $W_0,W_1,\ldots,W_{d-2}$ of this system have zero constant coefficient. And from this observation it is clear that the other coefficients are uniquely determined by induction.

\smallskip 
 
\noindent The following table shows the system for the first values, $d=3,4,5$:

\vspace{.2cm} 
\centerline{\begin{tabular}{|l|l|l|} 
\hline 
$d=3$ & $d=4$ & $d=5$\\ 
\hline 
$W_1=x(1+W_0)^2$ & $W_2=x(1+W_0)^{3}\rule{0pt}{2.4ex}$ 
& $W_3=x(1+W_0)^4$\\ 
$W_0=W_1^2$ & $W_0=W_1^2+W_2$ & $W_0=W_1^2+W_2$\\ 
 & $W_1=W_1^3+2W_1W_2$ & $W_1=W_1^3+2W_1W_2+W_3$\\ 
 & & $W_2=W_1^4+3W_1^2W_2+2W_1W_3+W_2^2$\\[.1cm] 
 \hline 
\end{tabular} 
} 
\vspace{.2cm}

In the case where $d$ is even, $d=2b$, one easily checks that $W_i=0$ for $i$ odd (this property is related to Proposition~\ref{prop:bip} and follows from the fact that, for odd $i$, all monomials in $h_{i+2}(W_1,\ldots,W_{d-2})$ contain a $W_j$ with odd $j$).  
Hence the system can be simplified. The series $V_i:=W_{2i}$ satisfy the system: 
\begin{equation}\label{eq:syst2} 
\left\{ 
\begin{array}{rcl}\ds 
V_{b-1}&=&x(1+V_0)^{2b-1},\\ 
V_i&=&h_{i+1}(V_1,\ldots,V_{b-1})\ \ \ \forall i\in\{0,\ldots,b-2\},\\ 
V_i&=&0  \ \ \ \forall i>b-1. 
\end{array} 
\right. 
\end{equation} 
Equivalently,~\eqref{eq:syst2} is obtained by a decomposition strategy for $b$-dibranching mobiles (defined in Remark~\ref{rk:2bangul}) very similar to the one for $d$-branching mobiles.  
 
\smallskip 
 
\noindent The following table shows the system for the first values, $b=2,3,4$: 
 
\vspace{.2cm} 
 
\centerline{ 
\begin{tabular}{|l|l|l|} 
\hline 
$b=2$ & $b=3$ & $b=4$\\ 
\hline 
$V_1=x(1+V_0)^3$ & $V_2=x(1+V_0)^{5} \rule{0pt}{2.4ex}$ 
& $V_3=x(1+V_0)^7$\\ 
$V_0=V_1$ & $V_0=V_1$ & $V_0=V_1$\\ 
 & $V_1=V_1^2+V_2$ & $V_1=V_1^2+V_2$\\ 
 & & $V_2=V_1^3+2V_1V_2+V_3$\\[.1cm] 
 \hline 
\end{tabular} 
} 
\vspace{.2cm}

Let $\cM_d$ be the family of $d$-branching mobiles rooted at a corner 
incident to a black vertex, and 
let $M_d(x)$ be the GF of $\cM_d$ counted according to the number of non-root black vertices. 
Since each of the $d$ half-edges incident to the root-vertex is either a bud or is 
connected to a planted mobile in $\cW_0$ we have 
$$ 
M_d(x)=(1+W_0)^{d}. 
$$

\begin{prop}[Counting $d$-angulations of girth $d$]\label{prop:countd} 
For $d\geq 3$, the generating function $F_d(x)$ of corner-rooted $d$-angulations of girth $d$ counted according to the number of inner faces has the following expression: 
$$F_{d}=W_{d-2}-\sum_{i=0}^{d-3}W_i W_{d-2-i},$$ 
where the series $W_0,\ldots, W_{d-2}$ are the unique power series solutions of the system~\eqref{eq:syst1}. Therefore $F_d$ is algebraic. Moreover it satisfies 
$$ 
F_d'(x)=(1+W_0)^d. 
$$ 
In the bipartite case, $d=2b$, the series expressions simplify to 
$$ 
F_{2b}=V_{b-1}-\sum_{i=0}^{b-2}V_i V_{b-1-i},\ \ \ F_{2b}'(x)=(1+V_0)^{2b}, 
$$ 
where the series $V_0,\ldots,V_{b-1}$ are the unique power series solutions of the system~\eqref{eq:syst2}.

\end{prop}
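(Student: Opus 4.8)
The plan is to transport the count to $d$-branching mobiles via Theorem~\ref{thm:bij_dangul} and then read off $F_d$ using Claim~\ref{claim:exposed-corners}. First I would observe that a corner-rooted $d$-angulation of girth $d$ induces a face-rooted one, and that Theorem~\ref{thm:bij_dangul} identifies face-rooted $d$-angulations of girth $d$ with $d$-branching mobiles, the number of inner faces matching the number of black vertices. Applying the first bijection of Claim~\ref{claim:exposed-corners} to each orientation $O\in\mE_d$ and its mobile $\Phi_-(O)$, and summing over $O$, identifies corner-rooted $d$-angulations of girth $d$ with $d$-branching mobiles carrying a marked \emph{exposed} bud (the root-corner corresponding to the marked exposed bud). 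Hence $F_d$ is the generating function of $d$-branching mobiles with a marked exposed bud, counted by black vertices.

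Next I would split a marked bud into the exposed and non-exposed cases. Let $B$ be the generating function of $d$-branching mobiles with a marked (arbitrary) bud, and $H$ that of mobiles with a marked half-edge incident to a white vertex, both counted by black vertices. The second bijection of Claim~\ref{claim:exposed-corners} matches mobiles with a marked non-exposed bud with mobiles with a marked white half-edge, so $B=F_d+H$. To compute $B$, I root the mobile at the marked bud: it sits at a black vertex $b$ of degree $d$ whose $d-1$ remaining half-edges are buds or lead to submobiles in $\cW_0$, and counting $b$ by a factor $x$ gives $B=x(1+W_0)^{d-1}=W_{d-2}$. To compute $H$, I take a marked half-edge of weight $i\in\{1,\dots,d-2\}$ at a white vertex $w$ and cut the weight-$(d-2)$ edge $e$ carrying it; completing each cut by a marked leaf, the $w$-side becomes a planted mobile in $\cW_{d-2-i}$ and the opposite side a planted mobile in $\cW_i$, so $H=\sum_{i=1}^{d-2}W_{d-2-i}W_i=\sum_{i=0}^{d-3}W_iW_{d-2-i}$. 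Therefore $F_d=B-H=W_{d-2}-\sum_{i=0}^{d-3}W_iW_{d-2-i}$, and since the $W_i$ are algebraic (they solve the polynomial system~\eqref{eq:syst1}), so is $F_d$.

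For the derivative, $(1+W_0)^d=M_d$ is the generating function of the family $\cM_d$ of $d$-branching mobiles rooted at a corner incident to a black vertex (counted by non-root black vertices), whereas $F_d'(x)$ counts corner-rooted $d$-angulations with a marked inner face, i.e.\ mobiles with both a marked exposed bud and a marked black vertex. I expect reconciling these two rootings to be the main obstacle: there is no term-by-term matching between the $d$ exposed buds and the $d$ corners of the marked black vertex (a subtree hanging off the marked vertex may contain any number of exposed buds), so no direct bijection is available. The rigorous route I would take is therefore to differentiate the explicit formula for $F_d$ and reduce the outcome with the relations~\eqref{eq:syst1}, organizing the simplification through the series identity $\sum_{j\geq 0}h_j(W_1,\dots,W_{d-2})\,t^j=\bigl(1-\sum_i W_i t^i\bigr)^{-1}$. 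I have verified that this reduction produces $F_d'=(1+W_0)^3$ in the case $d=3$, and the same mechanism should carry over to general $d$.

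Finally, for the bipartite case $d=2b$ I would invoke $W_i=0$ for odd $i$ (Proposition~\ref{prop:bip}) and set $V_i=W_{2i}$, which solve the reduced system~\eqref{eq:syst2}. Since $d$ is even, every term $W_iW_{d-2-i}$ with $i$ odd vanishes (then $d-2-i$ is odd as well), so the surviving terms are $W_{2j}W_{2b-2-2j}=V_jV_{b-1-j}$ for $j\in\{0,\dots,b-2\}$, together with $W_{d-2}=V_{b-1}$ and $W_0=V_0$. This yields $F_{2b}=V_{b-1}-\sum_{j=0}^{b-2}V_jV_{b-1-j}$ and $F_{2b}'=(1+V_0)^{2b}$.
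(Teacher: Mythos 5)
Your derivation of the closed formula $F_d=W_{d-2}-\sum_{i=0}^{d-3}W_iW_{d-2-i}$ is correct and is essentially the paper's own argument: identify corner-rooted $d$-angulations with mobiles carrying a marked exposed bud via Theorem~\ref{thm:bij_dangul} and the first part of Claim~\ref{claim:exposed-corners}, then write $F_d=B-H$ using the second part, with $B=W_{d-2}$ (mobile rooted at a bud) and $H=\sum_{i=0}^{d-3}W_iW_{d-2-i}$ (cutting at a marked half-edge incident to a white vertex). The bipartite reduction at the end is also fine.

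The gap is in the derivative formula $F_d'(x)=(1+W_0)^d$. What you offer there is not a proof: you differentiate the system~\eqref{eq:syst1}, check the identity for $d=3$, and assert that ``the same mechanism should carry over.'' For general $d$ this means differentiating $d-1$ coupled equations involving the polynomials $h_j$, solving the resulting linear system for $W_0',\ldots,W_{d-2}'$, and showing that a specific combination collapses to $(1+W_0)^d$; nothing in your write-up shows this simplification goes through beyond $d=3$, and it is not a mechanical repetition of that case. Moreover, your premise that ``no direct bijection is available'' is what leads you down this path, and it is mistaken. The paper's argument sidesteps the mismatch you worry about (exposed buds versus corners of a marked black vertex) by a change of viewpoint: $F_d'(x)$ counts corner-rooted $d$-angulations with a secondary marked inner face, and by the standard symmetry-free double counting this is the same as face-rooted $d$-angulations with a marked corner \emph{not} incident to the root-face (both counts equal $\sum_M nd/|\mathrm{Aut}(M)|$, since an automorphism fixing a corner is the identity). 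Under $\Phi_-$, corners of inner faces correspond canonically to corners incident to black vertices of the mobile, so these doubly-marked maps are in bijection with the family $\cM_d$ of mobiles rooted at a corner of a black vertex, giving $F_d'(x)=M_d(x)=(1+W_0)^d$ directly. You should either reproduce this corner-correspondence argument or actually carry out the algebraic reduction for arbitrary $d$; as written, the second half of the proposition (and hence its bipartite analogue $F_{2b}'=(1+V_0)^{2b}$) is unproved.
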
 
\begin{proof} 
Theorem~\ref{thm:bij_dangul} and Claim~\ref{claim:exposed-corners} (first part) imply that $F_{d}$ is the series counting $d$-branching mobiles rooted at an exposed bud. Call $B$ (resp. $H$) the GF of $d$-branching mobiles (counted according to the number of black vertices) with a marked bud (resp. marked half-edge incident to a white vertex). The second part of Claim~\ref{claim:exposed-corners} yields $F_{d}=B-H$. A mobile with a marked bud identifies to a planted mobile in $\cW_{d-2}$ (planted mobile where the vertex connected to the marked leaf is black, so the marked leaf can be taken as root-bud), hence $B=W_{d-2}$. 
A mobile with a marked half-edge incident to a white vertex identifies with an ordered pair of planted mobiles $(w,w')$ in  $\cW_i\times \cW_{d-2-i}$ for some $i$ in $\{0,\ldots,d-3\}$, hence $H=\sum_{i=0}^{d-3}W_iW_{d-2-i}$. 
 
Concerning the expression of $F_d'(x)$, observe that $F_d'(x)$ is the GF of corner-rooted $d$-angulations 
of girth $d$ with a secondary marked inner face. Equivalently, $F_d'(x)$ counts face-rooted $d$-angulations of girth $d$ 
with a secondary marked corner not incident to the root-face. By the master bijection $\Phi_-$, marking a corner not incident to the root-face in the $d$-angulation is equivalent to marking a corner incident to a black vertex in the corresponding mobile. Hence, Theorem~\ref{thm:bij_dangul} gives $F_d'(x)=M_d(x)=(1+W_0)^d$. 
\end{proof} 
 
The cases $d=3$ and $d=4$ correspond to simple triangulations and 
simple quadrangulations; we recover (see Section~\ref{sec:exact-counting}) exact-counting formulas 
due to Brown~\cite{Brown:triang3connexes+boundary,Brown:quadrangulation+boundary}. 
For $d\geq 5$ the counting results are new (to the best of our knowledge). For $d=5$ and $d=6$, one gets $F_5=x+5x^3+121x^5+4690x^7+228065x^9+O(x^{11})$, 
and $F_6=x+3x^2+17x^3+128x^4+1131x^5+11070x^6+O(x^7)$.

For any $d\geq 3$ a simple analysis based on the Drmota-Lalley-Wood theorem~\cite[VII.6]{fla} 
ensures that for odd $d$, the coefficient $[x^{2n}]F_d'(x)$ (from the Euler relation the 
odd coefficients of $F_d'(x)$ are zero) 
is asymptotically $c_d\,{\gamma_d}^n\,n^{-3/2}$ for some computable  positive 
constants $c_d$ and $\gamma_d$ depending on $d$. 
For even $d$, the coefficient $[x^{n}]F_d'(x)$ is asymptotically 
$c_d\,{\gamma_d}^n\,n^{-3/2}$ again with $c_d$ and $\gamma_d$ computable 
constants. Since $[x^n]F_d(x)=\tfrac1{n}[x^{n-1}]F_d'(x)$, the 
number of corner-rooted $d$-angulations of girth $d$ follows (up to the 
parity requirement for odd $d$) the asymptotic behavior $c_d\,{\gamma_d}^n\,n^{-5/2}$ which is universal for families of rooted planar maps.

\subsection{Counting rooted $p$-gonal $d$-angulations of girth $d$}\label{subsec:counting-with}~\\ 
Let $p\geq d\geq 3$. We start by characterizing the generating function of $(p,d)$-branching mobiles. A $(p,d)$-branching mobile is said  to be \emph{marked} if one of the $p$ corners incident to the special vertex is marked. Let $\cM_{p,d}$ be the class of marked  $(p,d)$-branching mobiles, and let $M_{p,d}(x)$  be the GF of this class counted according to the number of non-special black vertices. 
Given a mobile in $\cM_{p,d}$, we consider the (white) neighbors $v_1,\ldots,v_p$ of the special vertex $s$. 
For $i=1\ldots p$ the sub-mobiles $M_1,\ldots,M_{r_i}$ planted at $v_i$ (and not containing $s$) belong to some classes $\cW_{\al(i,1)},\ldots,\cW_{\al(i,r_i)}$ (with $\alpha(i,j)>0$) where $d-2+\sum_{j=1}^{r_i}\al(i,j)$ is the weight of $v_i$. Moreover, by definition of $(p,d)$-branching mobiles, the sum of the weights of  $v_1,\ldots,v_p$ is $pd-p-d=(d-2)p+p-d$. This decomposition gives 
\begin{equation}\label{eq:Mk} 
M_{p,d}(x)=h_{p-d}^{(p)}(W_1,\ldots,W_{d-2}), 
\end{equation} 
where for $j\geq 0$, $h_j^{(p)}\equiv h_j^{(p)}(w_1,w_2,\ldots)$ is the generating function of all $p$-tuples of compositions of non-negative integers $i_1,\ldots,i_p$ such that $i_1+\ldots+i_p=j$, where $w_i$ marks the number of parts of size $i$. It is clear that  $h_j^{(1)}=h_{j}$ and that $h_j^{(p)}$ satisfies: 
\begin{equation}\label{eq:defq} 
h_j^{(p)}(w_1,w_2,\ldots):=[t^j]\frac{1}{(1-\sum_{i>0}t^i w_i)^p}. 
\end{equation} 
Observe that in the special case $p=d$, there is a unique $(d,d)$-branching mobile, hence $M_{d,d}(x)=1$ (which is coherent with \eqref{eq:Mk} since $h_0^{(p)}=1$). 
 
We now turn to $p$-annular $d$-angulations. We denote by $\cA_{p,d}$ the family of \emph{marked} $p$-annular $d$-angulations of girth $d$ (a boundary vertex is marked), and we denote by $\cB_{p,d}$ the subfamily of those that are non-separated. Let $A_{p,d}(x)$ and $B_{p,d}(x)$ be the GF of these classes counted according to the number of non-boundary inner faces. By  Theorem~\ref{thm:bijdk}, the marked  non-separated  $p$-annular $d$-angulations of girth $d$ are in bijection by $\Phi_+$ with the marked $(p,d)$-branching mobiles (since marking the mobile is equivalent to marking the $p$-annular $d$-angulation at a boundary vertex). Thus, $B_{p,d}(x)=M_{p,d}(x)$. Moreover, Proposition~\ref{prop:decomposition-pannular} directly implies: 
$$ 
A_{p,d}(x)=M_{p,d}(x)F_d'(x), 
$$ 
since the series $F_d'(x)$ (defined in Proposition~\ref{prop:countd}) counts marked $d$-annular $d$-angulations of girth $d$.  We summarize:

\begin{prop}[Counting $p$-gonal $d$-angulations of girth $d$]\label{prop:countdk} 
For $p\geq d\geq 3$, the generating function $F_{p,d}(x)$ of $p$-gonal $d$-angulations of girth $d$ rooted at a corner 
incident to the boundary face, and counted according to the number of 
non-boundary faces satisfies 
$$ 
F'_{p,d}(x)=h_{p-d}^{(p)}(W_1,\ldots,W_{d-2})\cdot(1+W_0)^d, 
$$ 
where $h_j^{(p)}$ is defined in~\eqref{eq:defq} and the series $W_1,\ldots, W_{d-2}$ 
are specified by~\eqref{eq:syst1}. 
 
In the bipartite case, $d=2b$ and $p=2q$, the expression simplifies to 
$$ 
F'_{2q,2b}(x)=h_{q-b}^{(2q)}(V_1,\ldots,V_{b-1})\cdot(1+V_0)^{2b}, 
$$ 
where the  series $V_1,\ldots,V_{b-1}$ are specified by~\eqref{eq:syst2}. 
\end{prop}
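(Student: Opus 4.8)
The plan is to obtain the first displayed formula by reducing it to the identity $A_{p,d}(x)=M_{p,d}(x)F_d'(x)$ established just before the statement (a consequence of Proposition~\ref{prop:decomposition-pannular}), via the intermediate equality $F'_{p,d}(x)=A_{p,d}(x)$. Granting the latter, the conclusion is immediate: I substitute $M_{p,d}(x)=h_{p-d}^{(p)}(W_1,\ldots,W_{d-2})$ from~\eqref{eq:Mk} and $F_d'(x)=(1+W_0)^d$ from Proposition~\ref{prop:countd} into $A_{p,d}(x)$. Thus essentially all of the analytic content is already in place, and the work left is purely the combinatorial interpretation of the derivative together with the bipartite simplification.

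The core step is reading off $F'_{p,d}(x)=A_{p,d}(x)$. I would write $F_{p,d}(x)=\sum_n a_n x^n$, so that $[x^{n-1}]F'_{p,d}(x)=n\,a_n$ counts a $p$-gonal $d$-angulation of girth $d$ rooted at a corner of the boundary face, with $n$ non-boundary faces, together with a choice of one of those $n$ faces. Declaring the chosen face (necessarily a $d$-gon) to be the root-face turns the object into a $p$-annular $d$-angulation whose remaining $n-1$ non-boundary faces become its non-boundary inner faces; this explains the index shift between $F_{p,d}$ and $A_{p,d}$. Because the boundary contour is simple, each boundary vertex carries a unique corner of the boundary face, so the boundary-corner rooting of $F_{p,d}$ matches precisely the marked-boundary-vertex decoration defining $\cA_{p,d}$. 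Comparing coefficients then yields $F'_{p,d}(x)=A_{p,d}(x)$.

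The one place demanding genuine care — and the step I expect to be the main obstacle — is verifying that this correspondence is a bijection on isomorphism classes and not merely a formal statement about derivatives. The worry is symmetry: a $p$-annular $d$-angulation decorated only by a root-face and a boundary vertex might a priori retain a nontrivial automorphism, which would desynchronize the two coefficient counts. I would resolve this exactly as in the discussion following Claim~\ref{claim:exposed-corners}: the boundary face is permanently distinguished, so the marked boundary vertex together with that face determines a unique boundary corner, and a corner-rooting of a connected oriented map leaves no nontrivial automorphism. Hence both $F'_{p,d}$ and $A_{p,d}$ enumerate the same rigid objects, and the coefficient identity holds on the nose.

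For the bipartite case $d=2b$, $p=2q$, I would invoke Proposition~\ref{prop:pbip} (equivalently the observation after~\eqref{eq:syst1} that $W_i=0$ for odd $i$, with $W_{2i}=V_i$ solving~\eqref{eq:syst2}), giving at once $(1+W_0)^d=(1+V_0)^{2b}$. Setting the odd-indexed variables to zero in~\eqref{eq:defq} kills every composition using an odd part, so only compositions of $p-d=2(q-b)$ into even parts survive; halving each part $2k$ and using $w_{2k}=W_{2k}=V_k$ biject these with $2q$-tuples of compositions of $q-b$, whence $h_{p-d}^{(p)}(W_1,\ldots,W_{d-2})=h_{q-b}^{(2q)}(V_1,\ldots,V_{b-1})$. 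Substituting produces the second displayed formula. This final reduction is routine once the even-weight property of Proposition~\ref{prop:pbip} is in hand.
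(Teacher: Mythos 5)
Your proof is correct and follows essentially the same route as the paper: the paper's own proof is precisely the chain $F'_{p,d}(x)=A_{p,d}(x)=M_{p,d}(x)F_d'(x)$ (the derivative being interpreted as marking a secondary non-boundary face of degree $d$, with corner-rooting in the boundary face equivalent to marking a boundary vertex because the boundary contour is simple), followed by substituting \eqref{eq:Mk} and Proposition~\ref{prop:countd}. Your extra care about automorphisms and your explicit verification of the bipartite reduction via $W_{2i}=V_i$, $W_{\mathrm{odd}}=0$ only spell out details the paper leaves implicit.
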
 
\begin{proof} 
By definition, a $p$-annular $d$-angulation is a $p$-gonal $d$-angulation  
with a secondary marked face of degree $d$. Hence, 
 $F'_{p,d}(x)=A_{p,d}(x)=M_{p,d}(x)F_d'(x)$. 
Moreover $F'_d(x)=(1+W_0)^d$ by Proposition~\ref{prop:countd} and  $M_{p,d}(x)=h_{p-d}^{(p)}(W_1,\ldots,W_{d-2})$ by \eqref{eq:Mk}. 
\end{proof}

\subsection{Exact-counting formulas for triangulations and quadrangulations}\label{sec:exact-counting}~\\ 
In the case of triangulations and quadrangulations  ($d=3,4$) the 
system of equations given by Proposition~\ref{prop:countdk} 
takes a form amenable to the Lagrange inversion formula. 
We thus recover bijectively the counting formulas established by Brown~\cite{Brown:triang3connexes+boundary,Brown:quadrangulation+boundary} 
for simple triangulations and simple quadrangulations with a boundary.

\begin{prop}[Counting simple $p$-gonal triangulations] 
For $p\geq 3$, $n\geq 0$, let $t_{p,n}$  be the number of corner-rooted $p$-gonal triangulations with $n+p$ vertices which are simple (i.e., have girth 3) and have the root-corner in the $p$-gonal face (no restriction on the root-corner for $p=3$). 
The generating function $T_p(x)=\sum_{n\geq 0}(2n+p-2)\,t_{p,n}x^n$ satisfies 
$$ 
T_p(x):=\binom{2p-4}{p-3}\,u^{2p-3},\ \ \ \mathrm{ where }\ \ u=1+x\,u^4. 
$$ 
Consequently, the Lagrange inversion formula gives: 
$$ 
t_{p,n}=\frac{2(2p-3)!}{(p-1)!(p-3)!}\,\frac{(4n+2p-5)!}{n!(3n+2p-3)!}. 
$$ 
\end{prop}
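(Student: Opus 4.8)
The plan is to specialize Proposition~\ref{prop:countdk} to $d=3$ and then translate the resulting inner-face generating function into $T_p$. When $d=3$ we have $d-2=1$, so the only series entering $F'_{p,3}$ is $W_1$, and the system~\eqref{eq:syst1} degenerates to $W_0=W_1^2$ together with $W_1=x(1+W_0)^2$. Moreover the single-variable specialization of~\eqref{eq:defq} gives $h_{p-3}^{(p)}(W_1)=[t^{p-3}](1-tW_1)^{-p}=\binom{2p-4}{p-3}W_1^{p-3}$, so Proposition~\ref{prop:countdk} reads $F'_{p,3}=\binom{2p-4}{p-3}W_1^{p-3}(1+W_0)^3$.

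The first genuine task is to reconcile the two counting parameters. The series $F_{p,3}$ grades corner-rooted $p$-gonal triangulations of girth $3$ by their number $N$ of non-boundary faces, whereas $t_{p,n}$ is indexed by the vertex number $V=n+p$. Applying the Euler relation (as in Lemma~\ref{lem:counting}) to such a triangulation yields $2E=3N+p$ and $V=\tfrac12(N+p)+1$, hence $N=2n+p-2$; in particular only inner-face numbers of the parity of $p$ occur. Renaming the counting variable of $F_{p,3}$ to $y$, this means $F_{p,3}(y)=\sum_n t_{p,n}\,y^{2n+p-2}$, so that $F'_{p,3}(y)=y^{p-3}\sum_n(2n+p-2)t_{p,n}y^{2n}=y^{p-3}T_p(y^2)$. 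Thus, setting $x=y^2$, we obtain $T_p(x)=y^{3-p}F'_{p,3}(y)$, and the weight $2n+p-2$ imposed in the definition of $T_p$ is precisely the factor $N$ produced by the differentiation.

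Next I would introduce $u:=1+W_0=1+W_1^2$ and read off the two relations encoding~\eqref{eq:syst1}. Since $W_1=y(1+W_0)^2=yu^2$ we get $W_1^2=y^2u^4=xu^4$, while also $W_1^2=W_0=u-1$; hence $u=1+xu^4$. (One should check along the way that $W_0$ is an even series in $y$, so that $u$ is a genuine power series in $x=y^2$.) Substituting $1+W_0=u$ and $W_1/y=u^2$ into the expression for $F'_{p,3}$ then collapses the exponents, $(W_1/y)^{p-3}(1+W_0)^3=u^{2(p-3)}u^3=u^{2p-3}$, which gives exactly $T_p=\binom{2p-4}{p-3}u^{2p-3}$ with $u=1+xu^4$, as claimed.

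Finally I would extract coefficients by Lagrange inversion. Setting $w=u-1$ turns $u=1+xu^4$ into $w=x(1+w)^4$, the standard form $w=x\phi(w)$ with $\phi(w)=(1+w)^4$. Applying Lagrange--B\"urmann to $H(w)=(1+w)^{2p-3}$ gives, for $n\geq 1$, $[x^n]u^{2p-3}=\tfrac1n[w^{n-1}]\big((2p-3)(1+w)^{4n+2p-4}\big)=\tfrac{2p-3}{n}\binom{4n+2p-4}{n-1}$ (the case $n=0$ being immediate from $u(0)=1$). Dividing by $2n+p-2$ as demanded by the definition of $T_p$ and simplifying the binomials via $(2p-3)\binom{2p-4}{p-3}=\tfrac{(2p-3)!}{(p-3)!(p-1)!}$ and $(4n+2p-4)!=2(2n+p-2)(4n+2p-5)!$ yields the announced closed form. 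I expect the main obstacle to be not any single computation but the bookkeeping in the middle steps: correctly matching the inner-face grading of the master-bijection output to the vertex grading of Brown's formula, keeping track of the parity $N\equiv p\pmod 2$, the prefactor $y^{p-3}$, and the substitution $x=y^2$. Once $T_p=\binom{2p-4}{p-3}u^{2p-3}$ is in place, the Lagrange extraction and binomial simplification are routine.
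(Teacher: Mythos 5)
Your proposal is correct and follows essentially the same route as the paper: specialize Proposition~\ref{prop:countdk} to $d=3$, use the Euler relation to convert the inner-face grading of $F'_{p,3}$ into the vertex grading of $T_p$ (with the parity bookkeeping and the factor $x^{p-3}$), substitute to obtain $u=1+xu^4$, and conclude by Lagrange inversion. The only difference is that you carry out the Lagrange--B\"urmann extraction and the binomial simplification explicitly, whereas the paper invokes the Lagrange inversion formula in one line; your computation is correct.
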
 
\begin{proof} 
By the Euler relation, a $p$-gonal triangulation with $n+p$ vertices has $2n+p-1$ faces. 
Hence $T_p(x^2)x^{p-3}=F'_{p,3}(x)$. 
By Proposition~\ref{prop:countdk}, 
$$ 
F'_{p,3}(x)=\binom{2p-4}{p-3}W_1^{p-3}(1+W_0)^3=x^{p-3}\binom{2p-4}{p-3}(1+W_0)^{2p-3}, 
$$ 
where $W_0,W_1$ are specified by $\{W_1=x(1+W_0)^2,\ W_0=W_1^2\}$. Thus, $T_p(x^2)=\binom{2p-4}{p-3}(1+W_0)^{2p-3}$. 
Moreover, $1+W_0=1+W_1^2=1+x^2(1+W_0)^4$, hence $1+W_0(x)=u(x^2)$, where $u\equiv u(x)$ is the series specified by $u=1+xu^4$. Thus, $T_p(x)=\binom{2p-4}{p-3}\,u^{2p-3}$. 
\end{proof}

\begin{prop}[Counting simple $2p$-gonal quadrangulations] 
For $p\geq 2$, $n\geq 0$, let $q_{p,n}$ be the number of corner-rooted $2p$-gonal quadrangulations with $n+2p$ vertices which are simple (i.e., have girth 4) and have the root-corner incident to the $2p$-gonal face (no restriction on the root-corner for $p=2$). 
The generating function $Q_{p}(x)=\sum_{n\geq 0}(n+p-1)q_{p,n}x^n$ satisfies 
\beq 
Q_{p}(x)=\binom{3p-3}{p-2}\,u^{3p-2},\ \ \ \mathrm{where}\ \ u=1+x\,u^3. 
\eeq 
Consequently, the Lagrange inversion formula gives: 
$$ 
~~q_{p,n}=\frac{3(3p-2)!}{(p-2)!(2p-1)!}\,\frac{(3n+3p-4)!}{n!(2n+3p-2)!}. 
$$ 
\end{prop}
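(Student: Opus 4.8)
The plan is to transcribe, almost verbatim, the argument used for simple $p$-gonal triangulations, working throughout in the bipartite specialization $d=2b$ with $b=2$, $p\mapsto 2p$ (so $q=p$ in the notation of Proposition~\ref{prop:countdk}). The first step is a face-count normalization via the Euler relation. A $2p$-gonal quadrangulation with $n+2p$ vertices has $\ee=p+2N$ edges and $\ff=N+1$ faces, where $N$ denotes the number of non-boundary faces marked by $x$ in $F_{2p,4}$; substituting into $\vv-\ee+\ff=2$ gives $N=n+p-1$. Since the coefficient of $x^{N}$ in $F_{2p,4}$ is $q_{p,n}$ with $n=N-p+1$, differentiating and re-indexing should produce the clean relation $F'_{2p,4}(x)=x^{p-2}\,Q_p(x)$; here the weight $(n+p-1)=N$ appearing in the definition of $Q_p$ is exactly the factor supplied by the derivative.

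Next I would feed the bipartite formula of Proposition~\ref{prop:countdk} into this identity. With $q=p$ and $b=2$, the only relevant series is $V_1$, and the formula reads $F'_{2p,4}(x)=h^{(2p)}_{p-2}(V_1)\cdot(1+V_0)^4$. The key computation is to evaluate $h^{(2p)}_{p-2}(V_1)$: since only the variable $w_1=V_1$ occurs, the definition~\eqref{eq:defq} collapses to a single coefficient extraction $[t^{p-2}](1-tV_1)^{-2p}$, i.e. a negative-binomial coefficient, giving $h^{(2p)}_{p-2}(V_1)=\binom{3p-3}{p-2}V_1^{p-2}$. I would then solve the $b=2$ system~\eqref{eq:syst2}, which reads $V_0=V_1=x(1+V_0)^3$: setting $u:=1+V_0$ yields $u=1+xu^3$ and $V_1=xu^3$. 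Substituting $V_1^{p-2}=x^{p-2}u^{3p-6}$ and $(1+V_0)^4=u^4$ gives $F'_{2p,4}(x)=\binom{3p-3}{p-2}x^{p-2}u^{3p-2}$, and comparing with $F'_{2p,4}=x^{p-2}Q_p$ produces
$$Q_p(x)=\binom{3p-3}{p-2}u^{3p-2},\qquad u=1+xu^3,$$
as claimed.

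For the closed formula I would apply Lagrange inversion to $u=1+xu^3$. Writing $v=u-1=x(1+v)^3$ and extracting $[x^n](1+v)^{3p-2}$ via $[x^n]H(v)=\tfrac1n[v^{n-1}]H'(v)(1+v)^{3n}$ gives $\tfrac{3p-2}{n}\binom{3n+3p-3}{n-1}$, so that
$$(n+p-1)\,q_{p,n}=[x^n]Q_p=\binom{3p-3}{p-2}\frac{3p-2}{n}\binom{3n+3p-3}{n-1}.$$
Dividing by $n+p-1$ and simplifying then yields the displayed expression for $q_{p,n}$.

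I expect the only delicate point to be the final bookkeeping. One must combine the binomial and factorial factors and use the identity $(3n+3p-3)!=3(n+p-1)(3n+3p-4)!$, valid because $3n+3p-3=3(n+p-1)$, together with $\tfrac1{n(n-1)!}=\tfrac1{n!}$, in order to absorb the prefactor $n+p-1$ and recover both the displayed factor of $3$ and the factorial shape $\tfrac{3(3p-2)!}{(p-2)!(2p-1)!}\tfrac{(3n+3p-4)!}{n!(2n+3p-2)!}$. Everything else is a direct specialization of the triangulation argument, so the genuine content lies in the coefficient extraction $h^{(2p)}_{p-2}(V_1)=\binom{3p-3}{p-2}V_1^{p-2}$ and in the Euler-relation normalization $F'_{2p,4}=x^{p-2}Q_p$.
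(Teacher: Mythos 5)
Your proposal is correct and follows essentially the same route as the paper: the Euler-relation normalization $F'_{2p,4}=x^{p-2}Q_p$, the specialization of Proposition~\ref{prop:countdk} with $b=2$, $q=p$ (including the coefficient extraction $h^{(2p)}_{p-2}(V_1)=\binom{3p-3}{p-2}V_1^{p-2}$), the substitution $u=1+V_0$ solving $u=1+xu^3$, and Lagrange inversion. You merely spell out more explicitly the steps the paper leaves implicit (the negative-binomial extraction, the Lagrange inversion computation, and the factorial simplification via $3n+3p-3=3(n+p-1)$), all of which check out.
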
 
\begin{proof} 
By the Euler relation, a $2p$-gonal quadrangulation with $n+2p$ vertices has $n+p$ faces. 
Hence $Q_p(x)x^{p-2}=F'_{2p,4}(x)$. 
By Proposition~\ref{prop:countdk}, 
$$ 
F'_{2p,4}(x)=\binom{3p-3}{p-2}V_1^{p-2}(1+V_0)^4=x^{p-2}\binom{3p-3}{p-2}(1+V_0)^{3p-2}, 
$$ 
where $V_0,V_1$ are specified by $\{V_1=x(1+V_0)^3,\ V_0=V_1\}$. Thus, $Q_p(x)=\binom{3p-3}{p-2}(1+V_0)^{3p-2}$. 
Moreover, $1+V_0=1+V_1=1+x(1+V_0)^3$, hence $1+V_0(x)=u(x)$, where $u\equiv u(x)$ 
is the series specified by $u=1+xu^3$. Thus, $Q_p(x)=\binom{3p-3}{p-2}\,u^{3p-2}$. 
\end{proof}


\bigskip

\section{Proof that the mappings $\Phi_+$, $\Phi_-$ are bijections}\label{sec:fromOBtoPhi} 
In this section, we prove Theorem~\ref{thm:master-bijections} thanks to a reduction to a bijection $\Psi$ described in~\cite{OB:boisees}. In this section, \emph{the orientations are non-weighted, and the mobiles are properly bicolored}. 
The relation between $\Psi$ and the master bijections $\Phi_+$, $\Phi_-$ involves \emph{duality}.\\

\titre{Duality.} 
 The \emph{dual} $O^*$ of an orientation $O$ is obtained by the process represented in Figure~\ref{fig:duality}(a): 
\begin{itemize} 
\item place a vertex $v_f$ of $O^*$ in each face $f$ of $O$, 
\item for each edge $e$ of $O$ having a face $f$ on its left and $f'$ on its right, draw a \emph{dual edge} $e^*$ of $O^*$ oriented from $v_f$ to  $v_{f'}$ across $e$. 
\end{itemize} 
Note that the duality (as defined above) is not an involution (applying duality twice returns every edge). Duality maps vertex-rooted orientations to face-rooted orientations. It is easy to check that a face-rooted orientation $O$ is minimal if and only if $O^*$ is \emph{accessible}, that is, accessible from the root-vertex. Similarly, a vertex-rooted orientation $O$ is accessible if and only if $O^*$ is \emph{maximal}, that is, has no clockwise circuit. 
For each $d\geq 1$, define $\mO_d$ as the set of clockwise-minimal accessible orientations of outer degree $d$ (so $\mO=\cup_{d\geq 1}\mO_d$),  
and define $\wO_d$ as the subset of orientations in $\mO_d$ that are admissible (so $\wO=\cup_{d\geq 1}\wO_d$).  
We denote by $\mS_d$ and $\wS_d$ respectively the set of orientations which are the image by duality of the sets $\mO_d$ and  $\wO_d$. 
Thus, $\mS_d$ is the set of vertex-rooted orientations which are accessible, have a root-vertex of indegree 0 
and degree $d$, and which are maximal for one of the faces incident to the root-vertex (maximality holds for each of these faces in this case); see Figure~\ref{fig:duality}(b).  The set $\wS_d$ is the subset of  $\mS_d$ made of the orientations such that all the faces incident to the root-vertex have \emph{counterclockwise degree} 1 (only one edge is oriented in counterclockwise direction around these faces). 

\begin{figure}[h] 
\begin{center} 
\includegraphics[width=\linewidth]{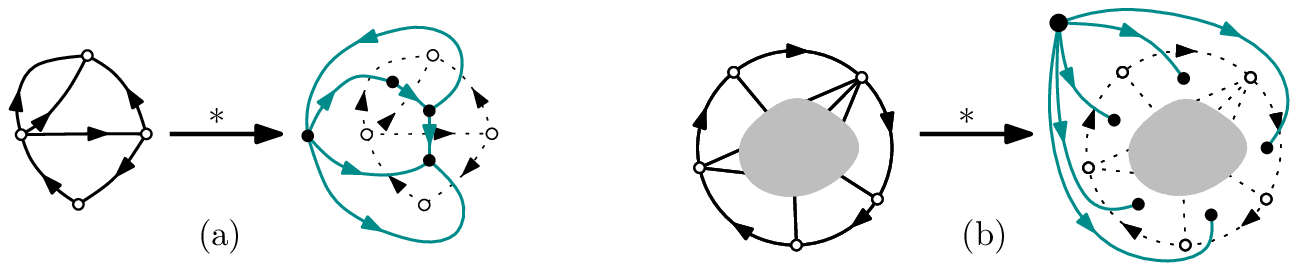} 
\end{center} 
\caption{The dual of an orientation.} 
\label{fig:duality} 
\end{figure} 
 
 
 
 

\titre{Partial closure and partial opening.}\\ 
We now present the bijection $\Psi$ from~\cite{OB:boisees} between corner-rooted maximal accessible orientations and mobiles of excess 1. The description of $\Psi$ below is in fact taken from~\cite[Section 7]{OB:covered-maps}.

Let $M$ be a mobile with  $p$ edges and $q$ buds (hence excess $\delta=p-q$). The corresponding \emph{fully blossoming} mobile $M'$ is obtained from $M$ by inserting a dangling half-edge called a \emph{stem}  in each of the $p$ corners of $M$ following an edge in counterclockwise direction around a black vertex. A fully blossoming mobile is represented in solid lines in Figure~\ref{fig:close_mobile_ccw}(b), where buds are (as usual) represented by outgoing arrows 
and stems are represented by ingoing arrows. 
A \emph{counterclockwise walk} around $M'$ (with the edges of the mobile $M'$ on the left of the walker) sees a succession of buds and stems. 
Associating an opening parenthesis to each bud and a closing parenthesis to each stem, 
one obtains a cyclic binary word with $q$ opening and $p$ closing parentheses. This 
yields in turn a partial matching of the buds with stems (a bud is matched with the next free stem in 
counterclockwise order around the mobile), leaving $|\delta|$ dangling half-edges unmatched (the unmatched 
dangling half-edges are stems if $\delta> 0$ and are buds if $\delta< 0$). 
 
The \emph{partial closure} $C$ of  the mobile $M$  is obtained by 
forming an oriented edge out of each matched pair. Clearly the oriented edges can be formed in a planar way, and this process is uniquely defined (recall that all our maps are on the sphere). The partial closure is represented in Figure~\ref{fig:close_mobile_ccw}(a)-(b). We consider the partial closure $C$ as a planar map with two types of edges (those of the mobile, which are non-oriented, and the new formed edges, which are oriented) and $|\delta|$ dangling half-edges which are all incident to the same face, which we take to be the root-face of $C$. Note that, if $\delta\geq 0$, there are $\delta$ white corners incident to the root-face of $C$, because initially the number of such corners is equal to the number $p$ of edges of the mobile, and then each matched pair forming an edge
(there are $q$ such pairs, one for each bud) decreases this number by $1$. These corners, which stay incident to the root-face throughout the partial closure, are called \emph{exposed white corners}.\\ 
 
\begin{figure} 
\begin{center} 
\includegraphics[width=\linewidth]{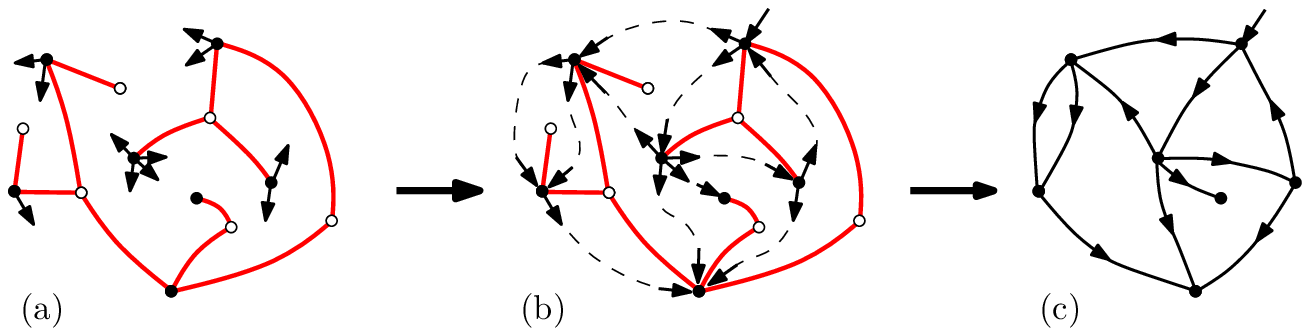} 
\end{center} 
\caption{The rooted closure of a mobile of excess $\delta=1$.} 
\label{fig:close_mobile_ccw} 
\end{figure}

Let $O$ be an oriented map. 
The \emph{partial opening} of $O$ is the map $C$ with vertices colored black or white and with two types of edges (oriented and non-oriented) obtained as follows. 
\begin{itemize} 
\item Color in black the vertices of $O$ and insert a white vertex in each face of $O$. 
\item Around each vertex $v$ of $O$ draw a non-oriented edge from any corner $c$ which follows an ingoing edge in clockwise order around $v$ to the white vertex in the face containing $c$. 
\end{itemize} 
If $O$ is corner-rooted, then the ingoing arrow indicating the root-corner is interpreted as an ingoing half-edge (a stem) and gives rise to an edge of $C$. For instance, the partial opening of the corner-rooted map in Figure~\ref{fig:close_mobile_ccw}(c) is the map in  Figure~\ref{fig:close_mobile_ccw}(b).\\

\titre{The rooted, positive and, negative opening/closure}\\ 
We now recall and extend the results given in~\cite{OB:boisees} about closures and openings. Observe that the partial closure $C$ of a mobile $M$ of excess $1$ has one dangling stem.  The \emph{rooted closure} of $M$, denoted $\Psi(M)$, is obtained from the partial closure $C$ by erasing every white vertex and every edge of the mobile; see Figure~\ref{fig:close_mobile_ccw}(b)-(c). The embedded graph $\Psi(M)$ is always connected (hence a map) and the dangling stem is considered as indicating the root-corner of $\Psi(M)$.  The \emph{rooted opening} of a corner-rooted orientation $O$ is obtained from its partial opening $C$ by erasing all the ingoing half-edges of $O$ (this leaves only the non-oriented edges of $C$ and some buds incident to black vertices). The following result was proved in~\cite{OB:boisees} (see also~\cite{OB:covered-maps}).

\begin{theo}\label{thm:base-case} 
The rooted closure $\Psi$ is a bijection between mobiles of excess $1$ and corner-rooted  maximal accessible orientations. The rooted opening is the inverse mapping. 
\end{theo}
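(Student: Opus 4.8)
The plan is to prove that $\Psi$ and the rooted opening are mutually inverse by a local, edge-by-edge analysis, after first checking that each mapping lands in the claimed target set. Since both operations are built from purely local moves (matching a bud to a stem on one side; attaching a white corner just after an ingoing half-edge on the other), the heart of the matter is not the local compatibility but two \emph{global} well-definedness claims: that the closure of a mobile of excess $1$ is maximal and accessible, and that the opening of a maximal accessible orientation is a tree.

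First I would pin down the combinatorics of the matching using the parenthesis encoding already set up in the text. Reading an opening parenthesis for each bud and a closing one for each stem along the counterclockwise contour of the fully blossoming mobile $M'$, a mobile of excess $1$ has $p$ stems and $q=p-1$ buds, so the cyclic word has exactly one unmatched closing symbol; on the sphere the nested matching is planar and canonical, and the partial closure has a single dangling stem incident to a well-defined face, which we declare to be the root-face. This shows $\Psi(M)$ is a well-defined corner-rooted embedded graph; connectivity, accessibility from the root-corner, and maximality (no clockwise circuit) then have to be verified, the last being exactly the global obstruction, as it is forced by the convention of matching each bud to the \emph{next} free stem in counterclockwise order.

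For the opening, a counting identity does the bookkeeping. Writing $n,m,f$ for the numbers of vertices, edges and faces of $O$, the partial opening creates one non-oriented edge per ingoing half-edge plus one for the root stem, i.e. $m+1$ edges joining the $n$ black and $f$ white vertices; by Euler's relation $n+f=m+2$, so $m+1=(n+f)-1$ is precisely the number of edges of a spanning tree of the black-and-white vertex set. It therefore suffices to prove that the non-oriented edges are acyclic (connectivity then follows, or conversely). This acyclicity is the main obstacle: I would argue by contradiction, showing that a cycle among the non-oriented edges would enclose a region of the sphere on which $O$ either fails to be accessible from the root-corner or contains a clockwise circuit, contradicting maximality and accessibility. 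Once acyclicity is established, erasing the ingoing half-edges turns each of the $m$ oriented edges of $O$ into a dangling bud at a black vertex, leaving a properly bicolored mobile with $m+1$ edges and $m$ buds, hence of excess $1$.

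Finally I would check that closure and opening are inverse, which I expect to be routine once well-definedness is settled: attaching a white corner after an ingoing half-edge is literally undone by forming the matched edge out of the corresponding bud and stem, and conversely, so that opening $\Psi(M)$ recovers $M$ and closing the opening of $O$ recovers every oriented edge of $O$ together with its root-corner. The delicate point in this step is to verify that the canonical counterclockwise matching used in the closure reproduces exactly the face-by-face pairing prescribed by the opening; this is where maximality of $O$ re-enters, guaranteeing that the parenthesis word read off from the opening is already correctly nested. I expect the acyclicity/maximality correspondence to be the crux throughout and the remainder to reduce to careful local bookkeeping. Of course, since this statement is exactly the bijection $\Psi$ established in~\cite{OB:boisees} and reformulated in~\cite{OB:covered-maps}, one may alternatively simply invoke those references.
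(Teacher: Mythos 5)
The paper never proves Theorem~\ref{thm:base-case} itself: the sentence introducing it says the result ``was proved in~\cite{OB:boisees} (see also~\cite{OB:covered-maps})'', so the paper's entire justification is that citation. Your closing remark --- that one may simply invoke those references --- is therefore exactly what the paper does, and it is the only part of your text that can be matched against the paper.

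Read as a self-contained argument, however, your proposal has genuine gaps, and they sit precisely at the claims that constitute the content of the theorem. First, maximality of the closure $\Psi(M)$ is not ``forced by the convention of matching each bud to the next free stem in counterclockwise order''; that convention defines the map, but ruling out clockwise circuits in the result requires an actual argument, which you only announce. Second, accessibility of $\Psi(M)$ from the root-corner is named and never addressed. Third, for the opening, your Euler-relation count is correct and does reduce treeness of the non-oriented edges to acyclicity (or to connectivity), but acyclicity is then deferred (``I would argue by contradiction, showing that a cycle \ldots would enclose a region \ldots''), as is the verification that closure and opening are mutually inverse (``I expect to be routine'', ``this is where maximality of $O$ re-enters''). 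These deferred steps are not bookkeeping: they are the substance of the theorem, and carrying them out is what occupies the proof in~\cite{OB:boisees}. So as written the proposal is a roadmap, not a proof. If your intent is to cite, that is legitimate and coincides with the paper; if your intent is to be self-contained, the maximality, accessibility, acyclicity, and inverse-correspondence arguments all still need to be supplied.
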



We now present the mappings $\Psi_+$ and $\Psi_-$ defined respectively on mobiles of positive and negative excess, see Figure~\ref{fig:positive-closure}. 
\begin{Def} 
Let $M$ be a mobile of excess $\delta\neq 0$ and let $C$ be its partial closure. 
\begin{itemize} 
\item If $\delta>0$, then $C$ has $\delta$ stems (incident to the root-face). The positive closure of $M$, denoted $\Psi_+(M)$, is obtained from $C$ by first creating a (black) root-vertex $v$ of $\Psi_+(M)$ in the root-face of $C$ and connecting it to each stem (which becomes part of an edge of $O$ oriented away from $v$); second erasing the white vertices and edges of the mobile. 
\item If  $\delta<0$, then $C$ has $\delta$ buds (incident to the root-face).  The negative closure of $M$, denoted $\Psi_-(M)$, is obtained from $C$ by first creating a (black) root-vertex $v$ of $\Psi_-(M)$ in the root-face of $C$ and connecting it to each bud and then reorienting these edges (each bud becomes part of an edge of $\Psi_-(M)$ oriented away from $v$); second erasing the white vertices and edges of the mobile. 
\end{itemize} 
\end{Def} 
\fig{width=12cm}{positive-closure}{(a) Positive closure $\Psi_+$. (b) Negative closure $\Psi_-$.} 
 

\begin{theo}\label{thm:delta_close} 
Let $d$ be a positive integer. 
\begin{itemize} 
\item The positive closure $\Psi_+$ is a bijection between the set of mobiles of excess $d$ and the set $\mS_{d}$ of orientations. Moreover, the mapping defined on $\mO_{d}$ by first applying duality (thereby obtaining an orientation in $\mS_d$) and then applying the inverse mapping $\Psi_+^{-1}$ is the mapping $\Phi_+$ defined in Definition~\ref{def:master-bijections}. Thus, $\Phi_+$ is a bijection between $\mO_{d}$ and (properly bicolored) mobiles of excess $d$. 
\item The negative-closure $\Psi_-$ is a bijection between the set of mobiles of excess $-d$ and the subset $\wS_{d}$ of orientations. Moreover, the mapping defined on $\wO_{d}$ by first applying duality  (thereby obtaining an orientation in $\wS_{d}$) and then applying the inverse mapping $\Psi_-^{-1}$ is the mapping $\Phi_-$ defined in Definition~\ref{def:master-bijections}. Thus, $\Phi_-$ is a bijection between $\wO_{d}$ and (properly bicolored) mobiles of excess $-d$. 
\end{itemize} 
\end{theo}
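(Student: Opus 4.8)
The plan is to deduce Theorem~\ref{thm:delta_close} from the excess-$1$ case already available as Theorem~\ref{thm:base-case}, and then to recognize the resulting bijection as the local construction $\Phi_\pm$ of Definition~\ref{def:master-bijections}. First I would reduce $\Psi_+$ to the rooted closure $\Psi$. Given a mobile $M$ of excess $d$, its partial closure $C$ carries exactly $d$ dangling stems incident to the root-face, and $\Psi_+(M)$ is obtained by attaching a new black vertex $v$ to all of them. I would realize this attachment as an instance of $\Psi$ by forming the mobile $M^+$ obtained from $M$ by grafting, as a new leaf, a black vertex $v$ endowed with $d$ buds (placed so that, in the counterclockwise matching, these buds capture precisely the $d$ stems that $\Psi_+$ connects to $v$). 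A bud--stem count shows that $M^+$ has excess $1$ and that $\Psi(M^+)$ coincides with $\Psi_+(M)$, now corner-rooted at the corner incident to $v$; since the $d$ buds of $v$ become edges oriented away from $v$, this vertex has indegree $0$ and degree $d$. As $M\mapsto M^+$ is a bijection from mobiles of excess $d$ onto the mobiles of excess $1$ of this special shape, Theorem~\ref{thm:base-case} yields that $\Psi_+$ is a bijection from mobiles of excess $d$ onto the corner-rooted maximal accessible orientations whose root-vertex has indegree $0$ and degree $d$ --- that is, exactly onto $\mS_d$; here I would check that accessibility and maximality at an incident face are inherited from Theorem~\ref{thm:base-case}.

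The negative case is entirely parallel: for a mobile of excess $-d$ the partial closure leaves $d$ dangling buds, and $\Psi_-$ attaches $v$ to them after reorienting, so that $v$ is again a source of degree $d$. The same grafting reduction (now grafting a vertex carrying $d$ stems and reorienting) identifies $\Psi_-$ with an instance of $\Psi$ and shows it is a bijection onto the subset $\wS_d\subseteq\mS_d$; the additional constraint defining $\wS_d$ --- that every face incident to the root-vertex has counterclockwise degree $1$ --- I would read off from the fact that the half-edges attached to $v$ were buds, so each such face sees a single counterclockwise edge around $v$.

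It then remains to identify $\Psi_+^{-1}\circ(\text{duality})$ with $\Phi_+$ on $\mO_d$, and likewise for $\Phi_-$. This is a local, face-by-face and edge-by-edge check. Duality places a vertex of $O^*$ in each face of $O$ and a face of $O^*$ around each vertex of $O$, so the black vertices $b_f$ and the white vertices of $\Phi_+$ match, respectively, the vertices and the inserted white vertices of the partial opening of $O^*$; the root-face $f_0$ of $O$ becomes the root-vertex of $O^*$, so deleting $b_{f_0}$ and its incident buds corresponds exactly to erasing that root-vertex in $\Psi_+^{-1}$. The crux is to verify that a $1$-way edge $e$ of $O$ with ingoing half-edge $h$ at $v$ is sent by duality to the precise configuration (a corner following an ingoing edge of $O^*$) on which the partial opening draws the mobile edge from $b_f$ to $v$ in the corner $c$ and grafts the bud at $b_{f'}$ toward $c'$ --- i.e., that duality converts the local transformation of Definition~\ref{def:one-way-operation} into the opening step. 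For $\Phi_-$ I would additionally track the preliminary reorientation of the outer clockwise circuit and the erasure of the outer white vertices, matching these with the passage from $\mO_d$ to $\wO_d$ and with the counterclockwise-degree condition of $\wS_d$.

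I expect the main obstacle to be precisely this last matching of orientation conventions under duality together with the exact image characterizations: confirming that the grafted-source reduction reproduces $\Psi_\pm$ on the nose (that the counterclockwise matching indeed assigns the intended stems and buds to $v$), and that accessibility, maximality, and --- in the negative case --- the counterclockwise-degree-$1$ condition are exactly the constraints cut out by the excess-$d$ hypothesis. The reorientation step in $\Psi_-$ and the deletion of the outer vertices are the most error-prone pieces of bookkeeping.
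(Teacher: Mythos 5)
Your positive-closure argument is essentially the paper's: the paper likewise grafts onto $M$ a new black vertex carrying $d$ buds, joined by one edge to an exposed white corner, obtains a mobile of excess $1$, and invokes Theorem~\ref{thm:base-case}. One point you gloss over: $\mS_d$ consists of \emph{vertex-rooted} orientations, while your reduction $M\mapsto M^+$ depends on the choice of one of the $d$ exposed white corners and lands in \emph{corner-rooted} orientations; because of possible symmetries (the same subtlety flagged around Claim~\ref{claim:exposed-corners}), a ``$d$-to-$1$ on both sides'' argument needs care. The paper sidesteps this by defining the inverse map (the positive opening: partial opening, erase ingoing half-edges and the root-vertex) \emph{canonically}, without any choice, and only uses a choice of root-corner to verify that the result is a mobile.

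The genuine gap is in the negative case. Your plan to identify $\Psi_-$ with ``an instance of $\Psi$'' by ``grafting a vertex carrying $d$ stems and reorienting'' cannot work as stated: stems are not constituents of mobiles (they are created by the blossoming process, one per edge at a black vertex), and, more fundamentally, in $\Psi$ every matched pair produces an edge oriented \emph{from the bud towards the stem}, so any new vertex whose half-edges get matched to the $d$ dangling buds would acquire indegree $d$, not outdegree $d$. The reorientation built into $\Psi_-$ is exactly what takes you outside the scope of Theorem~\ref{thm:base-case}, and your sketch does not absorb it. The paper's missing idea is a different reduction: define $\theta(M)$ by replacing each of the $d$ unmatched buds by an edge to a new white leaf; then $\theta$ is a bijection from mobiles of excess $-d$ onto mobiles of excess $d$ all of whose exposed white corners lie on white leaves, and one checks $\Psi_-=\Psi_+\circ\theta$ (the new edge at a black vertex $b$ creates a stem at $b$, to which the positive closure attaches the root-vertex, reproducing the reoriented edge). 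The image is then identified as $\wS_d$ via the Fact that $\Psi_+(M)\in\wS_d$ if and only if every exposed white corner of $M$ is at a leaf, which rests on the equality between the counterclockwise degree of a face $f$ incident to the root-vertex and the degree of the corresponding white vertex $w_f$. Your proposed justification of the counterclockwise-degree-$1$ condition --- counting counterclockwise edges ``around $v$'' --- is incorrect as a proof: the counterclockwise degree is computed over the \emph{whole} boundary of the face, not just its two edges incident to $v$, so your count does not exclude further counterclockwise edges elsewhere on that boundary. (Note also that the paper then gets the identification $\Psi_-^{-1}\circ\ast=\Phi_-$ for free from $\Psi_-=\Psi_+\circ\theta$, rather than by the direct bookkeeping you propose.)
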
 
 
Theorem~\ref{thm:delta_close} clearly implies Theorem~\ref{thm:master-bijections}. It only remains to prove Theorem~\ref{thm:delta_close}.  

\begin{proof} 
\ite $\ $ We first treat the case of the positive closure $\Psi_+$. We begin by showing that the positive closure of a mobile $M$ of excess $d$ is in $\mS_{d}$. Let $C$ be the partial closure of $M$ and let $O=\Psi_+(M)$ be its positive closure. As observed above, the mobile $M$ has $d>0$ exposed white corners. Let $M'$ be the  mobile obtained from $M$ by creating a new black vertex $b$, joining $b$ to an (arbitrary) exposed white corner, and adding $d$ buds to $b$;  see Figure~\ref{fig:delta_close}. The excess of $M'$ is 1, hence by Theorem~\ref{thm:base-case} the rooted closure $O'=\Psi(M')$ is maximal and accessible. Moreover, it is easily seen that $b$ is the root-vertex of $O'$ (because the stem incident to $b$ is not matched during the partial closure) and has indegree 0, see Figure~\ref{fig:delta_close}. 
Thus, the orientation $O$ is in $\mS_{d}$. 
 
We also make the following observation (useful for the negative-closure):\\ 
\textbf{Fact.} The positive closure $O=\Psi_+(M)$ is in $\wS_{d}$ if and only if each of the exposed white corners of $M$ is incident to a (white) leaf of $M$.\\ 
Indeed, a white vertex $w_f$ of $M$ has an exposed white corner if and only if it corresponds to a face $f$ of $O$ incident to the root-vertex~$b$. Moreover, the counterclockwise degree of $f$ is the degree of $w_f$.

\begin{figure} 
\begin{center} 
\includegraphics[width=10cm]{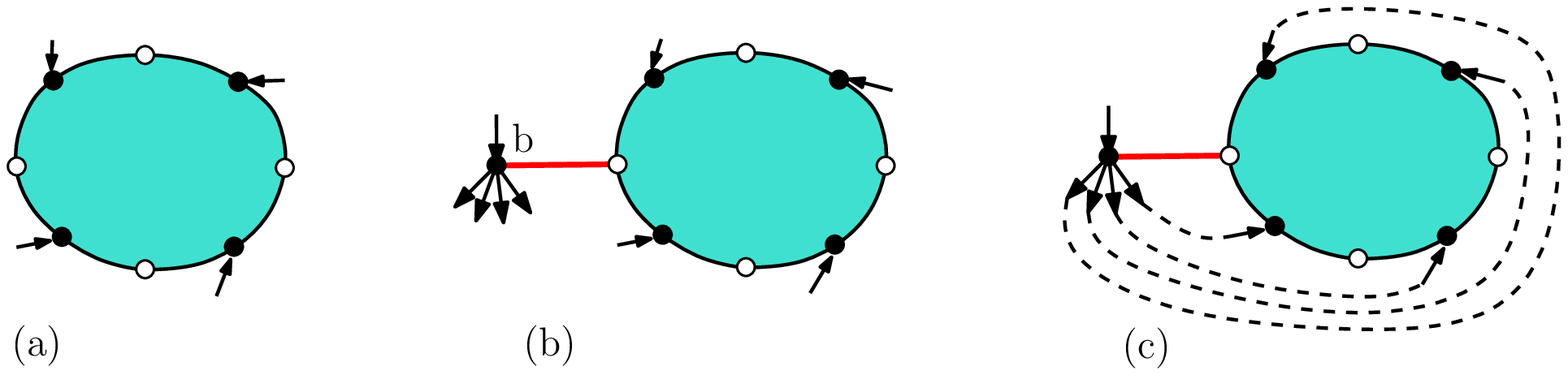} 
\end{center} 
\caption{Formulation of the positive closure $\Psi_+$ by reduction to the rooted closure $\Psi$. 
Figure (a) shows generically the partial closure of a mobile of excess $d=4$. In (b) one creates 
a black vertex $b$ with $d$ buds, and connects it to an exposed white corner. In (c) one performs the remaining matchings of buds with stems to complete the rooted closure.} 
\label{fig:delta_close} 
\end{figure} 
 
\begin{figure} 
\begin{center} 
\includegraphics[width=10cm]{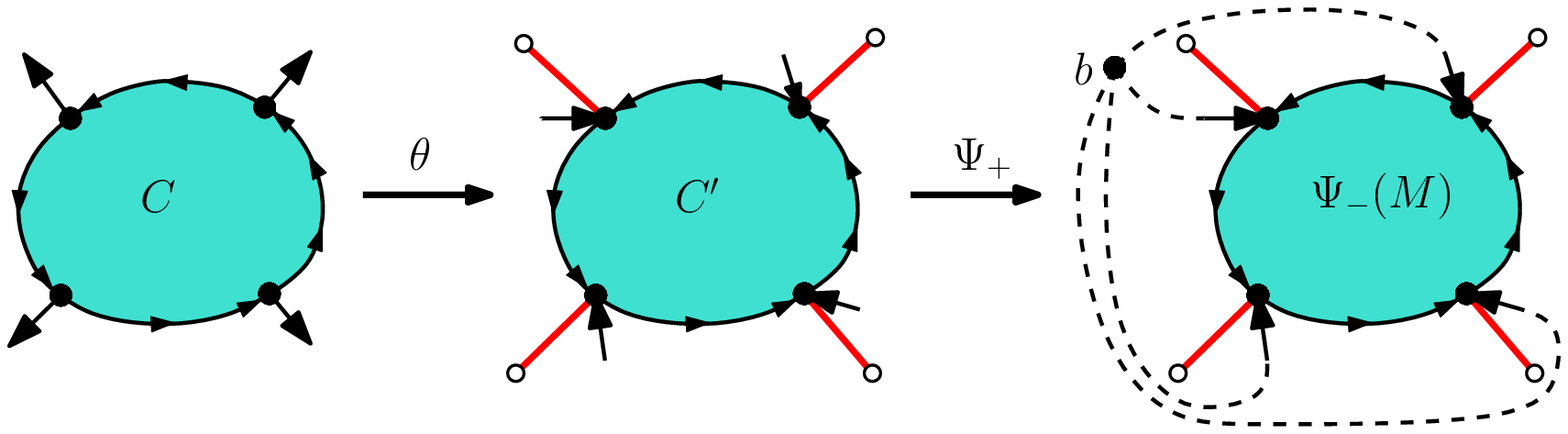} 
\end{center} 
\caption{Formulation of the negative closure $\Psi_-$, by reduction to the positive closure $\Psi_+$.} 
\label{fig:delta_close2} 
\end{figure} 
 
We now prove that the positive closure is a bijection by defining the inverse mapping. Let $O$ be a vertex-rooted orientation in $\mS_d$. We define the \emph{positive opening} of $O$, as the embedded graph with buds $M$ obtained by applying the partial opening of $O$, and then erasing every ingoing half-edge of $O$ as well as the root-vertex $b$ (and the incident outgoing half-edges).   In order to prove that $M$ is a  mobile, we consider a maximal accessible orientation $O'$ obtained from $O$ by choosing a root-corner for $O$ among the corners incident to the root-vertex $b$. By Theorem~\ref{thm:base-case}, the rooted opening of $O'$ gives a mobile $M'$. It is clear from the definitions that  $M$ is obtained from $M'$ by erasing the root-vertex $b$. Moreover, $b$ is a leaf of $M'$ (since $b$ is incident to no ingoing half-edge except the stem indicating the root-corner of $O$), hence $M$ is a mobile. Moreover, since the rooted closure and rooted opening are inverse bijections, it is clear that the positive closure and the positive opening are inverse bijections.

Lastly, it is clear from the definitions that taking an orientation $O$ in $\mO_d$, and applying the positive opening $\Psi_+^{-1}$ to the dual orientation $O^*$ gives the mobile $\Phi_+(O)$ as defined in Definition~\ref{def:master-bijections}. 
\smallskip

\ite $\ $ We now treat the case of the negative closure $\Psi_-$. Let $M$ be a mobile of excess $-d$. 
We denote by $\theta(M)$ the mobile of excess $d$ obtained from $M$ by transforming each of its $d$ unmatched  buds into an edge connected to a new white leaf. It is clear (Figure ~\ref{fig:delta_close2}) that the positive closure of $\theta(M)$ is equal to the negative closure of $M$, hence $\Psi_-=\Psi_+\circ\theta$. Moreover, $\theta$ is clearly a bijection between mobiles of excess $-d$ and mobiles of excess $d$ such that every exposed white corner belongs to a leaf (the inverse mapping $\theta^{-1}$ replaces each edge incident to an exposed leaf by a bud). By the fact underlined above, this shows that   $\Psi_-=\Psi_+\circ\theta$ is a bijection between  mobiles of excess $-d$ and the set $\wS_d$. 
 
Lastly, denoting by $*$ the duality mapping, one gets $\Psi_-^{-1}\circ *=\theta^{-1}\circ\Psi_+^{-1}\circ *=\theta^{-1}\circ\Phi_+$. In other words, taking an orientation $O$ in $\wO_d$, and applying the negative opening $\Psi_-^{-1}$ to $O^*$, is the same as applying the mapping $\Phi_+$ and then replacing each edge of the mobile incident to an outer vertex by a bud. By definition, this is the same as applying the mapping $\Phi_-$ to $O$. This completes the proof of Theorem~\ref{thm:delta_close} and of Theorem~\ref{thm:master-bijections}. 
\end{proof}


\section{Opening/closure for mobiles of excess $0$} 
We include here a mapping denoted $\Phi_0$ for mobiles of excess $0$, which can be seen as the mapping $\Phi_-$ when the outer face is degenerated to a single vertex (seen as a face of degree $0$). This mapping makes it possible to recover a bijection by Bouttier, Di Francesco, and Guitter~\cite{BDFG:mobiles}, as we will explain in~\cite{BeFu_Girth}.   

We call \emph{source-orientation} (resp. \emph{source-biorientation}) a vertex-rooted orientation (resp. biorientation) accessible from the root-vertex and such that all half-edges incident to the root-vertex are outgoing. 
Let $\wB_0$ be the set of weighted source-biorientations that are minimal with respect 
to a face incident to the root-vertex (in which case minimality holds with respect 
to all faces incident to the root-vertex).  
 
\begin{Def}\label{def:Phi0} 
Let $B\in\wB_0$, with root-vertex $v_0$. We view $B$ as embedded on the sphere, with the vertices of $B$ 
colored white, and we place a black vertex $b_f$ in each face $f$. The embedded 
graph $\Phi_0(B)$ is obtained by performing the local transformation of Figures~\ref{fig:one-way-operation} 
and~\ref{fig:i-way-operation}  
on each edge of $B$ (with weight transfer rules) and then deleting $v_0$.  
\end{Def} 
 
The mapping $\Phi_0$ is illustrated in Figure~\ref{fig:Phi0}. 
 
\begin{figure} 
\begin{center} 
\includegraphics[width=12.6cm]{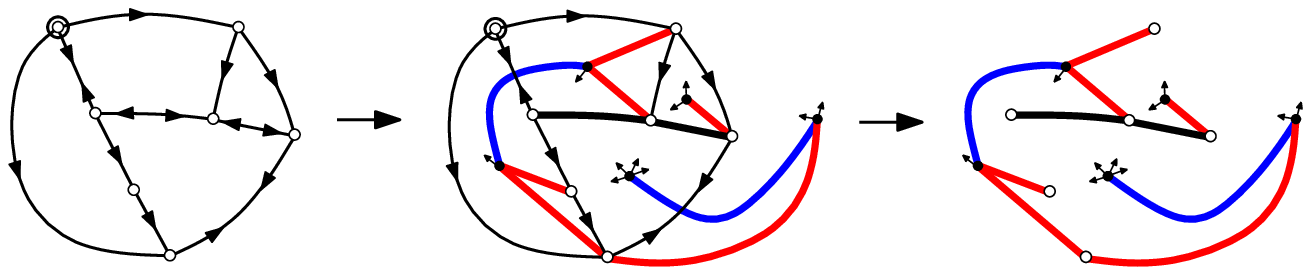} 
\end{center} 
\caption{Master bijection $\Phi_0$ applied to a source-biorientation in $\wB_0$ (weights are not indicated).} 
\label{fig:Phi0} 
\end{figure} 
 
\begin{theo}\label{theo:Phi0} 
The mapping $\Phi_0$ is a bijection between $\wB_0$ and weighted mobiles of excess zero. 
\end{theo}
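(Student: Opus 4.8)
The plan is to deduce Theorem~\ref{theo:Phi0} from the excess-$\pm d$ theory of Section~\ref{sec:fromOBtoPhi}, treating excess $0$ as the degenerate case $d=0$ of the negative closure. First I would dispose of the weights exactly as in the proof of Theorem~\ref{thm:master-bijections-biorientation}: through $\Phi_0$ the weight of each half-edge of $B$ is transferred to a single (non-bud) half-edge of the mobile, so there is no weight constraint on the image, and the bi-marking bijections $\mu$ and $\lambda$ used there reduce the statement to \emph{ordinary} source-orientations and \emph{properly bicolored} mobiles. I would also record the fact that $\Phi_0$ automatically lands among mobiles of excess $0$: a $0$-way, $1$-way, $2$-way edge produces respectively a black-black edge, a black-white edge plus one bud, and a white-white edge plus two buds, so the excess $\#(bw)+2\,\#(ww)-\#\text{buds}$ vanishes identically. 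Thus it only remains to prove that $\Phi_0$ restricts to a bijection on ordinary source-orientations.

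Inspecting the local rules, one sees as in Theorem~\ref{thm:delta_close} that $\Phi_0=\Psi_0^{-1}\circ{*}$, where $*$ denotes duality and $\Psi_0$ is the \emph{excess-$0$ closure}: since an excess-$0$ mobile has as many buds as stems, its partial closure $C$ has no dangling half-edge, so $C$ is a closed map on the sphere, and erasing the white vertices and mobile edges leaves a closed orientation $N$ whose vertices are the black vertices of the mobile. Because the black vertices correspond to the faces of $B$, this $N$ is precisely $B^{*}$. Using the duality dictionary recalled in Section~\ref{sec:fromOBtoPhi} (a vertex-rooted orientation is accessible iff its dual is maximal, and a face-rooted orientation is minimal iff its dual is accessible), I would check that $*$ is a bijection from ordinary $\wB_0$ onto the class $\mathcal{N}_0$ of face-rooted orientations that are maximal, accessible from every outer vertex, and whose root-face contour is a directed circuit (the directed circuit encoding that $v_0$ has indegree $0$, and accessibility from every outer vertex matching minimality of $B$ with respect to every face incident to $v_0$). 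The theorem then reduces to showing that $\Psi_0$ is a bijection from excess-$0$ mobiles onto $\mathcal{N}_0$.

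For this last point I would reduce to the rooted closure $\Psi$ of Theorem~\ref{thm:base-case} by a pointing argument. Marking one of the $d_0$ corners of the root-face circuit of an orientation in $\mathcal{N}_0$ turns it (using maximality and accessibility from that outer vertex) into a corner-rooted maximal accessible orientation, hence by Theorem~\ref{thm:base-case} into a mobile of excess $1$; conversely these are exactly the excess-$1$ mobiles obtained from a fixed excess-$0$ mobile by re-opening one of the closure edges lying along the former root-face into a dangling root-stem. Forgetting the marking on both sides gives $\Psi_0$, and the $d_0$ pointings biject canonically with the $d_0$ half-edges at $v_0$, so that, just as in Claim~\ref{claim:exposed-corners}, no symmetry-breaking convention is needed and the correspondence respects the symmetries of the objects.

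The main obstacle is precisely this reduction, and it comes from the degeneracy of the case $d=0$: unlike the cases $d\geq 1$, an excess-$0$ mobile has \emph{no} exposed white corner and \emph{no} unmatched half-edge, so the augmentation used in the proofs of $\Psi_{\pm}$ (attaching a black root-vertex to an exposed white corner) is unavailable and nothing in the bare closure singles out a root-face a priori. The heart of the proof is therefore to show that the root-vertex $v_0$ of $B$ — equivalently the root-face of $N=B^{*}$ — is recovered \emph{canonically} from the abstract mobile, as the distinguished face whose contour is a directed circuit compatible with the maximality and accessibility conditions defining $\mathcal{N}_0$; establishing uniqueness of this face (so that $\Psi_0$ is well defined on unrooted excess-$0$ mobiles), for which I would lean on the uniqueness of minimal orientations provided by Lemma~\ref{lem:unique_minimal}, is where the real work lies. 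Once this is settled, the bijectivity of $\Phi_0$ follows, completing the proof of Theorem~\ref{theo:Phi0}.
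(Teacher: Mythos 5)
Your overall route is the same as the paper's (reduce weights away via the $\mu,\lambda$ argument, dualize $\wB_0$ to the class $\wSO$ of counterclockwise-maximal accessible face-rooted orientations, then reduce the excess-$0$ closure to the rooted closure $\Psi$ of Theorem~\ref{thm:base-case} by a marking argument), but the central step is wrong as stated. Your two marking operations do not correspond to each other under $\Psi$. Marking a corner of the root-face of $O\in\wSO$ and applying $\Psi^{-1}$ does \emph{not} produce ``$M$ with a closure edge re-opened into a dangling root-stem'': it produces $M$ \emph{plus a new pendant white leaf} (one extra white vertex and one extra edge), attached at a black corner incident to the root-face. Indeed, the rooted opening of a corner-rooting $O'$ of $O$ places a white vertex $w_0$ in the root-face, and the root-corner stem creates one edge to $w_0$. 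By contrast, ``re-opening a closure edge into a dangling root-stem'' amounts to deleting a bud of $M$; the rooted closure of that excess-$1$ mobile is $O$ \emph{minus an edge of its root-face circuit}, corner-rooted --- a different orientation, not a corner-rooting of $O$, so Theorem~\ref{thm:base-case} applied to corner-rootings of $O$ never yields these mobiles. With this mismatch, the ``forget the marking on both sides'' step collapses. The idea you are missing is exactly the crux of the paper's proof: because the root-face contour of $O$ is a counterclockwise circuit, the white vertex $w_0$ placed in the root-face has degree $1$ in the rooted opening of \emph{any} corner-rooting of $O$; hence attaching a pendant white leaf at an exposed black corner (respectively, deleting the root white leaf) is the bijective link between excess-$0$ and excess-$1$ mobiles, and the mutual inverseness of $\Psi_0$ and the zero-opening then follows directly from that of $\Psi$ and the rooted opening.

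Your final paragraph is also misdirected. Lemma~\ref{lem:unique_minimal} concerns uniqueness of the minimal $\alpha/\beta$-orientation of a fixed map with prescribed vertex and edge weights; it says nothing about which face of the partial closure of an abstract excess-$0$ mobile is the root-face, and it cannot be ``leaned on'' for that purpose. Moreover, the paper needs no such canonical-recovery argument: $\Psi_0$ and the zero-opening are defined as maps between mobiles and \emph{face-rooted} orientations (the root-face of the closure being the face incident to no white vertex, equivalently the face lying to the left of all formed arcs), and both well-definedness and bijectivity are obtained at once from the corner-rooted case, where Theorem~\ref{thm:base-case} carries all the weight. So the place where you locate ``the real work'' is not where the work lies, and the tool you propose for it would not do that work.
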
 
 
By the reduction argument shown in Figure~\ref{fig:bijection_biorientation}  
(straightforwardly adapted to source-biorientations),  
 proving Theorem~\ref{theo:Phi0} comes down to proving the bijective result for ordinary unweighted orientations, 
that is, we have to show that the set $\wOO$ of source-orientations that are minimal 
with respect to a face incident to the pointed vertex (in which case minimality 
holds for any face incident to the pointed vertex) is in  
bijection via $\Phi_0$ with the set of properly bicolored mobiles of excess $0$.  
Note that $\wOO$ is in bijection (via duality) with the set $\wSO$  
of counterclockwise-maximal (i.e., maximal and such that the contour 
of the outer face is a counterclockwise circuit) 
 accessible orientations. Similarly as in the previous section, we define 
two mappings, respectively called zero-closure and zero-opening, which establish a  
bijection between (properly bicolored) mobiles of excess $0$ and orientations in $\wSO$.  
Given a mobile $M$ of excess $0$, let $C$ be the partial closure of $M$. Then the  
\emph{zero-closure} of $M$, denoted $\Psi_0(M)$, is obtained from $C$ by erasing the  
 white vertices and the edges of~$M$.  
 
\begin{theo} 
The zero-closure $\Psi_0$ is a bijection between (properly bicolored) mobiles of excess $0$ 
and the set $\wSO$ of counterclockwise-maximal accessible orientations. Moreover 
the mapping defined on $\wOO$ by first applying duality (thereby obtaining an orientation in $\wSO$)   
 and then applying $\Psi_0^{-1}$ is the mapping $\Phi_0$ defined in Definition~\ref{def:Phi0}.  
\end{theo}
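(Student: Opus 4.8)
The plan is to follow the pattern of the proofs of Theorems~\ref{thm:base-case} and~\ref{thm:delta_close}: I would treat $\Psi_0$ as the degenerate $d=0$ analogue of the rooted closure, define a \emph{zero-opening} as its candidate inverse, check that the two are mutually inverse, verify that $\Psi_0$ lands in $\wSO$, and finally read off the identity $\Phi_0=\Psi_0^{-1}\circ *$ from the definitions. As remarked just before the statement, the reduction of Figure~\ref{fig:bijection_biorientation} lets me discard the weights and argue with properly bicolored mobiles and ordinary orientations; the weighted statement of Theorem~\ref{theo:Phi0} then follows with no change.

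The first and main task is to locate the root-face. When $M$ has excess $0$, its partial closure $C$ has no dangling half-edge, so no face is singled out a priori. Erasing the white vertices and the mobile edges of $C$ yields a sphere orientation $O$, and a short Euler count (of the same flavour as in Claim~\ref{claim:dreg}) shows that $O$ has exactly one more face than $M$ has white vertices. Since each white vertex of $C$ opens into its own face of $O$, there is a unique face $f_0$ receiving no white vertex, and I set $\Psi_0(M)=O$ rooted at $f_0$. That $O$ is accessible and maximal, and that the contour of $f_0$ is a counterclockwise circuit (so that $O\in\wSO$), I would check directly from the bud-stem matching of $C$, by the very arguments that make the rooted closure maximal and accessible in Theorem~\ref{thm:base-case}; the only genuinely new point is the local behaviour of the matching along $f_0$.

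For the inverse, the zero-opening of $O\in\wSO$ performs the partial opening, erases every ingoing half-edge, and deletes the white vertex inserted in the root-face (which the counterclockwise-circuit condition forces to be isolated). That this produces a properly bicolored mobile of excess $0$, and that it inverts $\Psi_0$, follows exactly as the rooted opening inverts the rooted closure in Theorem~\ref{thm:base-case} (this simultaneously gives surjectivity onto $\wSO$). It then remains to identify $\Phi_0$ with $\Psi_0^{-1}\circ *$, which is a definition chase in the spirit of the closing paragraphs of the proof of Theorem~\ref{thm:delta_close}: duality carries $B\in\wOO$ to $O=B^*\in\wSO$ and the source root-vertex $v_0$ to the root-face $f_0$, and the local transformations of Definition~\ref{def:Phi0} agree edge by edge with the partial opening followed by the erasures defining $\Psi_0^{-1}$, the deletion of $v_0$ matching the deletion of the isolated white vertex of the root-face. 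Since $\Psi_0$ is then a bijection, this also yields the bijectivity of $\Phi_0$ claimed in Theorem~\ref{theo:Phi0}.

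I expect the crux to be the two points that have no counterpart in the cases $d\ge 1$: first, justifying that each white vertex opens into a distinct face, so that exactly one face of $O$ carries no white vertex and the root-face is canonically defined despite the absence of any dangling half-edge; and second, verifying that the contour of this face is a genuine counterclockwise circuit (equivalently, that the white vertex opened there is isolated), rather than merely a closed walk. Both reduce to a careful analysis of the bud-stem matching of $C$ along $f_0$; once they are settled, the bijectivity and the relation to $\Phi_0$ follow mechanically from Theorem~\ref{thm:base-case}.
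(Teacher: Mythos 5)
Your skeleton (define a zero-opening, check the image, check mutual inverseness, then a definition chase for $\Phi_0=\Psi_0^{-1}\circ\,*$) matches the paper's, and your Euler count is correct: the zero-closure does have exactly one more face than the mobile has white vertices. But there is a genuine gap: the two points you yourself isolate as the crux --- that distinct white vertices of $C$ end up in distinct faces of $O$ (so that the root-face is canonically defined), and that the contour of that face is a genuine counterclockwise circuit --- are never proved; you defer them to ``a careful analysis of the bud-stem matching,'' which is exactly the hard part. Moreover, you cannot settle them by repeating ``the very arguments'' of Theorem~\ref{thm:base-case}: that theorem is imported from~\cite{OB:boisees} and its proof is not reproduced in this paper, so here it is available only as a black box. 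The same objection applies to your claim that the inversion ``follows exactly'' as in the rooted case: the zero-closure is face-rooted with no marked corner, so the statements about the rooted opening/closure do not literally apply to it without an intermediate construction.

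The paper resolves all of this with one idea your proposal is missing: a reduction of the excess-$0$ case to the excess-$1$ case by attaching a pendant edge. Given $M$ of excess $0$, its partial closure $C$ has no exposed white corner, hence some exposed black corner; attaching a new black--white edge (ending at a new white leaf) at such a corner produces a mobile $M'$ of excess $1$, whose rooted closure $O'=\Psi(M')$ is, by Theorem~\ref{thm:base-case}, a maximal accessible corner-rooted orientation inducing $O=\Psi_0(M)$. This one construction (i) canonically identifies the root-face of $O$ as the face of $O'$ containing the dangling stem, which is your first crux; (ii) gives maximality and accessibility of $O$ for free; (iii) yields the counterclockwise contour because the attached white vertex is a leaf and the degree of the white vertex sitting in a face equals the counterclockwise degree of that face (the ``Fact'' in the proof of Theorem~\ref{thm:delta_close}) --- your second crux; and (iv) makes the inverse argument rigorous, since for every corner-rooted orientation $O'$ inducing $O$ the rooted opening of $O'$ is $M$ plus a pendant edge at the root white vertex, so mutual inverseness of $\Psi_0$ and the zero-opening (and surjectivity onto $\wSO$) is inherited from Theorem~\ref{thm:base-case}. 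Without this (or an equivalent) reduction, your proposal asserts rather than proves precisely the statements that make the theorem true.
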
 
\begin{proof} 
Let $M$ be a mobile of excess $0$, let $C$ be the partial closure of $M$, and $O$ the zero-closure of $M$.   As observed in Section \ref{sec:fromOBtoPhi}, $C$ has no exposed white corner (white corner incident to the root-face) hence it has some \emph{exposed black corners} (black corners incident to the root-face). 
Choose an exposed black corner, and let $M'$ be the mobile of excess $1$ obtained 
from $M$ by attaching a new black-white edge (connected to a new white leaf) at the chosen exposed 
black corner. Let $O'$ be the rooted closure of $M'$. Clearly the face-rooted 
orientation $O$ is induced by the corner-rooted orientation $O'$ (and each choice 
of exposed black corner yields each of the corner-rooted orientations inducing $O$).   
Moreover, $O'$ is maximal accessible by Theorem \ref{thm:base-case}.
Lastly,   since the white vertex carrying the dangling stem ---called the \emph{root white vertex} 
of the mobile--- is incident to a unique edge, 
the root-face of $O'$ (hence also of $O$) is counterclockwise.
Thus, $O$ is counterclockwise-maximal accessible, that is,  $O$ is in $\wSO$. 
 
We now define the inverse mapping, called zero-opening. Given $O\in\wSO$, the zero-opening of $O$  
is the embedded graph with buds obtained by first computing the partial opening of $O$,  
 then erasing all ingoing half-edges of $O$, and then erasing the white vertex placed 
in the root-face of $O$.        
It is easily checked that, for any corner-rooted orientation $O'$ inducing $O$, 
the rooted opening $M'$ of $O'$ is a (properly bicolored) mobile of excess $1$ 
where the root white vertex is incident to a unique edge. In addition the deletion of this edge 
 gives $M$, so $M$ is a (properly bicolored) mobile of excess $0$. To conclude, since the rooted 
closure and rooted opening are mutually inverse, then the zero closure and zero opening are also mutually inverse. 
Hence $\Psi_0$ is a bijection.   
Finally, the fact that $\Phi_0$ coincides with duality followed by $\Psi_0^{-1}$ is justified in the same 
way as in Theorem~\ref{thm:delta_close}.  
\end{proof}

\bigskip 
 
\section{Additional remarks} 
We have presented a general bijective strategy for planar maps that relies 
on certain orientations with no counterclockwise circuit. 
We have applied the approach to an infinite collection 
$(\mC_{d,p})_{d\geq 3,p\geq d}$ of families planar maps, where $\mC_{d,p}$ denotes 
the set of $d$-angulations of girth $d$ with a boundary of size $p$. 
For this purpose we have introduced so-called \emph{\ddm-orientations} for $d$-angulations of girth $d$. 
In future work we shall further explore and exploit the properties of these orientations: 
 
\titre{Schnyder decompositions.} In \cite{BeFu_Schnyder} we show that the \ddm-orientations of a $d$-angulation of girth $d$ are in bijection with certain coverings of the $d$-angulation by a set of $d$ forests crossing each other in a specific manner. These forest coverings extend to arbitrary $d\geq 3$ the so-called \emph{Schnyder woods} corresponding to the case $d=3$~\cite{Schnyder:wood1}.\\
\titre{Extension of the bijections to planar maps of fixed girth.} 
For each integer  $d\geq 3$, we have presented in Section~\ref{sec:bij_dang} a bijection for the class of $d$-angulations of girth $d$, which consists in a certain specialization of the master bijection $\Phi_-$. 
In the article~\cite{BeFu_Girth} we show that this bijection can be extended to the class $\mG_d$ of all  maps of girth $d$. 
The strategy in the article~\cite{BeFu_Girth} parallels the one initiated here: we characterize the maps in $\mG_d$ by certain (weighted) orientations and then obtain a bijection by specializing the master bijection $\Phi_-$. The bijection obtained for $\mG_d$ associates to any map of girth $d$ a mobile such that an inner face of degree~$i$ in the map corresponds to a black vertex of degree~$i$  in the mobile, so it gives a way of counting maps with control both on the girth and the face-degrees.\\

\bibliographystyle{alpha} 
\bibliography{biblio-dangulations} 
\label{sec:biblio} 
 
\end{document}